\documentclass[11pt,leqno,oneside]{amsart}
\usepackage{amsmath,amssymb,amsxtra,comment,graphicx,psfrag}
\usepackage{bm,mathrsfs}
\usepackage{mathtools}
\usepackage{xspace}
\usepackage{color}
\usepackage{enumerate}
\usepackage{subfigure,epstopdf}
\usepackage{pdfsync}
\usepackage{hhline}
\usepackage[margin=1.0in]{geometry}
\usepackage{fancyhdr}
\allowdisplaybreaks


\newcommand{\al}{\alpha}
\newcommand{\fy}{\varphi}

\def\Dal{{_0\partial_t^\al}}
\def\dDal{{_t\partial_T^\al}}
\def\bPtau{{_0\bar\partial_\tau^\al}}

\def\Om{\Omega}
\def\II{(\Om)}

\def\Uad{U_{\rm ad}}

\def\d{{\rm d}}

\def\vv{\vert\thickspace\!\!\vert\thickspace\!\!\vert}
\theoremstyle{plain}
\newtheorem{theorem}{Theorem}[section]
\newtheorem{remark}{Remark}[section]

\newtheorem{lemma}{Lemma}[section]

\numberwithin{equation}{section}

\usepackage{titlesec}
\titleformat{\section}{\vskip10pt\normalsize\bfseries}{\thesection.}{0.5em}{\centering}
\titleformat{\subsection}{\vskip10pt\normalsize\bfseries}{\thesubsection.}{0.5em}{}
\pagestyle{empty}
\pagestyle{fancy}
\lhead{}
\chead{}
\rhead{}

\cfoot{\vskip2pt{\small\thepage}}


\def\d{{\rm d}}

\begin{document}

\title[]{Pointwise-in-time error estimates for an optimal control problem with subdiffusion constraint}

\author[Bangti Jin]{Bangti Jin}
\address{Department of Computer Science, University College London, Gower Street, London, WC1E 2BT, UK.}
\email {{b.jin@ucl.ac.uk}}

\author[Buyang Li]{$\,\,$Buyang Li$\,$}
\address{Department of Applied Mathematics,
The Hong Kong Polytechnic University, Kowloon, Hong Kong}
\email {{buyang.li{\it @\,}polyu.edu.hk}}

\author[Zhi Zhou]{$\,\,$Zhi Zhou$\,$}
\address{Department of Applied Mathematics,
The Hong Kong Polytechnic University, Kowloon, Hong Kong}
\email {{zhizhou{\it @\,}polyu.edu.hk}}

\date{\today}

\maketitle

\begin{abstract}
{In this work, we present numerical analysis for a distributed optimal control problem, with box constraint on
the control, governed by a subdiffusion equation which involves a fractional
derivative of order $\alpha\in(0,1)$ in time. The fully discrete scheme is obtained by applying the conforming
linear Galerkin finite element method in space, L1 scheme/backward Euler convolution quadrature in time, and
the control variable by a variational type discretization. With a space mesh size $h$ and time stepsize $\tau$,
we establish the following order of convergence for the numerical solutions of the optimal control problem:
$O(\tau^{\min({1}/{2}+\alpha-\epsilon,1)}+h^2)$ in the discrete $L^2(0,T;L^2(\Omega))$ norm and
$O(\tau^{\alpha-\epsilon}+\ell_h^2h^2)$ in the discrete $L^\infty(0,T;L^2(\Omega))$ norm, with any
small $\epsilon>0$ and $\ell_h=\ln(2+1/h)$. The analysis relies essentially on the maximal
$L^p$-regularity and its discrete analogue for the subdiffusion problem. Numerical experiments are
provided to support the theoretical results.
}
{optimal control, time-fractional diffusion, L1 scheme, convolution quadrature, pointwise-in-time
error estimate, maximal regularity.}
\end{abstract}

\section{Introduction}
Let $\Omega\subset\mathbb{R}^d $ ($d=1,2,3$) be a convex polyhedral domain with a boundary $\partial\Omega$.
Consider the distributed optimal control problem
\begin{equation}\label{eqn:ob}
    \min_{q \in \Uad} J(u,q)=\tfrac12 \| u - u_d  \|_{L^2(0,T;L^2\II)}^2 + \tfrac\gamma2\| q \|_{L^2(0,T;L^2\II)}^2,
\end{equation}
subject to the following fractional-order partial differential equation
\begin{align}\label{eqn:fde}
\Dal u-\Delta u= f+q,\quad 0<t\leq T,\quad \mbox{with } u(0)=0,
\end{align}
where $T>0$ is a fixed final time, $\gamma>0$ a fixed penalty parameter, $\Delta:H^1_0(\Omega)\cap
H^2(\Omega)\rightarrow L^2(\Omega)$ the Dirichlet Laplacian, $f:(0,T)\rightarrow L^2(\Omega)$ a given
source term, and $u_d:(0,T)\rightarrow L^2(\Omega)$ the target function.
The admissible set $\Uad$ for the control $q$ is defined by
\begin{equation*}
    \Uad=\{ q\in L^2(0,T;L^2\II):~~a\le q \le b ~~\text{a.e. in}~~ \Omega\times(0,T)\},
\end{equation*}
with $a,b\in\mathbb{R}$ and $a< b$. The notation $\Dal u$ in \eqref{eqn:fde} denotes the left-sided Riemann-Liouville
fractional derivative in time $t$ of order $\alpha\in(0,1)$, defined by \cite[p. 70]{KilbasSrivastavaTrujillo:2006}
\begin{equation}\label{eqn:RLderive}
   \Dal u(t)= \frac{1}{\Gamma(1-\alpha)}\frac{\d}{\d t}\int_0^t(t-s)^{-\alpha} u(s)\d s.
\end{equation}
Since $u(0)=0$, the Riemann-Liouville derivative $\Dal u(t) $ coincides  with the usual Caputo derivative
\cite[p. 91]{KilbasSrivastavaTrujillo:2006}.
Further, when $\alpha=1$, $_0\partial_t^\alpha u(t)$ coincides with the first-order derivative $u'(t)$,
and thus the model \eqref{eqn:fde} recovers the standard parabolic problem.

The fractional derivative $\Dal u$ in the model \eqref{eqn:fde} is motivated by an ever-growing list of practical applications
related to subdiffusion processes, in which the mean square displacement grows sublinearly with time $t$, as opposed to linear
growth for normal diffusion. The list includes thermal diffusion in fractal media, protein transport in plasma membrane
and column experiments etc \cite{AdamsGelhar:1992,HatanoHatano:1998,Nigmatulin:1986}.
The numerical analysis of the model \eqref{eqn:fde}
has received much attention. However, the design and analysis of numerical methods for related optimal
control problems only started to attract attention \cite{AntilOtarola:2016,DuWangLiu:2016,LuZuazua:2016,YeXu:2013,YeXu:2015}.
The controllability of \eqref{eqn:fde} was discussed in \cite{FujishiroYammamoto:2014} and \cite{LuZuazua:2016}.
Ye and Xu \cite{YeXu:2013,YeXu:2015} proposed space-time spectral type methods for optimal control problems
under a subdiffusion constraint, and derived error estimates by assuming sufficiently smooth state
and control variables. Antil et al \cite{AntilOtarola:2016} studied an optimal
control problem with space- and time-fractional models, and showed the convergence of the discrete
approximations via a compactness argument. However, no error estimate for the optimal control was
given for the time-fractional case. Zhou and Gong \cite{ZhouGong:2016} proved the
well-posedness of problem \eqref{eqn:ob}--\eqref{eqn:fde} and derived $L^2(0,T;L^2(\Omega))$ error estimates
for the spatially semidiscrete finite element method, and described a time discretization method without error
estimate. To the best of our knowledge, there is no error estimate for time discretizations of
\eqref{eqn:ob}--\eqref{eqn:fde}. It is the main goal of this work to fill this gap.

This work is devoted to the error analysis of both time and space discretizations of
\eqref{eqn:ob}-\eqref{eqn:fde}. The model \eqref{eqn:fde} is discretized by the continuous piecewise
linear Galerkin FEM in space and the L1 approximation \cite{LinXu:2007} or backward Euler convolution quadrature
\cite{Lubich:1986} in time, and the control $q$ by a variational type discretization in
\cite{Hinze:2005}. The analysis relies crucially on $\ell^p(L^2(\Omega))$ error estimates for the fully discrete
finite element solutions of the direct problem with nonsmooth source term. Such results are still unavailable in
the existing literature. We shall derive such estimates in Theorems \ref{thm:err-time}
and \ref{thm:err-time2}, and use them to derive an $O(\tau^{\min({1}/{2}+\alpha-\epsilon,1)}+h^2)$ error
estimate in the discrete $L^2(0,T;L^2(\Omega))$ norm for the numerical solutions of problem
\eqref{eqn:ob}--\eqref{eqn:fde}, where $h$ and $\tau$ denote the mesh size and time stepsize, respectively,
and $\epsilon>0$ is small, cf. Theorems \ref{thm:err-space} and \ref{thm:full}. The $O(\tau^{\min(1/2+\alpha-\epsilon,1)})$
rate contrasts with the $O(\tau)$ rate for the parabolic counterpart
(see, e.g., \cite{MeidnerVexler:2008b,ChrysafinosKaratzas:2014,GongHinzeZhou:2014}).
The lower rate for $\alpha\leq1/2$ is due to the limited smoothing property of
problem \eqref{eqn:fde}, cf. Theorem \ref{thm:reg}. This also constitutes
the main technical challenge in the analysis. Based on the error estimate in the discrete $L^2(0,T;L^2
(\Omega))$ norm, we further derive a pointwise-in-time error estimate $O(\tau^{\alpha-\epsilon}+\ell_h^2h^2)$
(with $\ell_h=\log(2+1/h)$, cf. Theorems \ref{thm:err-space-inf} and \ref{thm:full-Linfty}). Our analysis
relies essentially on the maximal $L^p$-regularity of fractional evolution equations and its discrete
analogue \cite{Bajlekov:2001,JinLiZhou:max-reg}. Numerical experiments are provided to support the theoretical analysis.

The rest of the paper is organized as follows. In Section \ref{sec:prelim}, we discuss the solution regularity
and numerical approximation for problem \eqref{eqn:fde}. In Section \ref{sec:error}, we prove error bounds on
fully discrete approximations to problem \eqref{eqn:ob}--\eqref{eqn:fde}. Finally in Section \ref{sec:numer},
we provide numerical experiments to support the theoretical results. Throughout, the notation $c$ denotes a generic
constant which may differ at each occurrence, but it is always independent of the mesh size $h$ and time stepsize $\tau$.

\section{Regularity theory and numerical approximation of the direct problem}\label{sec:prelim}
In this section, we recall preliminaries and present analysis for the direct problem
\begin{align}\label{eqn:fde-forward}
\Dal u-\Delta u= g, \quad 0<t\leq T,
\quad\mbox{with}\quad
u(0)=0 ,
\end{align}
and its adjoint problem
\begin{align}\label{eqn:fde-adjoint}
\dDal z-\Delta z= \eta, \quad 0\leq t<T,
\quad\mbox{with}\quad
z(T)=0,
\end{align}
where the fractional derivative $ \dDal z$ is defined in \eqref{eqn:fracderiv} below.
In the case $\alpha\in(0,1/2]$, the initial condition should be understood properly: for a rough source  term $g$, the temporal
trace may not exist and the initial condition should be interpreted in a weak sense \cite{GorenfloLuchkoYamamoto:2015}.
Thus we refrain from the case of nonzero initial condition, and leave it to a future work.

\subsection{Sobolev spaces of functions vanishing at $t=0$}
We shall use extensively Bochner-Sobolev spaces $W^{s,p}(0,T;L^2(\Omega))$. For any $s\ge 0$ and
$1\le p< \infty$, we denote by $W^{s,p}(0,T;L^2(\Omega))$ the space of functions $v:(0,T)\rightarrow L^2(\Omega)$,
with the norm defined by interpolation. Equivalently, the space is equipped with the quotient norm
\begin{align}\label{quotient-norm}
\|v\|_{W^{s,p}(0,T;L^2(\Omega))}:=
\inf_{\widetilde v}\|\widetilde v\|_{W^{s,p}({\mathbb R};L^2(\Omega))} ,
\end{align}
where the infimum is taken over all possible extensions $\widetilde v$ that extend $v$ from $(0,T)$ to ${\mathbb R}$.
For any $0<s< 1$, one can define Sobolev--Slobodecki\v{\i} seminorm $|\cdot|_{W^{s,p}(0,T;L^2(\Omega))}$ by
\begin{equation}\label{eqn:SS-seminorm}
   | v  |_{W^{s,p}(0,T;L^2(\Omega))}^p := \int_0^T\int_0^T \frac{\|v(t)-v(\xi)\|_{L^2(\Omega)}^p}{|t-\xi|^{1+ps}} \,\d t\d\xi ,
\end{equation}
and the full norm $\|\cdot\|_{W^{s,p}(0,T;L^2(\Omega))}$ by
\begin{equation*}
\|v\|_{W^{s,p}(0,T;L^2(\Omega))}^p = \|v\|_{L^p(0,T;L^2(\Omega))}^p+|v|_{W^{s,p}
(0,T;L^2(\Omega))}^p .
\end{equation*}
For $s>1$, one can define similar seminorms and norms. Let
$$
C^\infty_L(0,T;L^2(\Omega)):=\{v=w|_{(0,T)}: w\in C^\infty({\mathbb R};L^2(\Omega)): \mbox{supp}(w)\subset [0,\infty)\},
$$
and denote by $W_L^{s,p}(0,T;L^2(\Omega))$ the closure of $C^\infty_L(0,T;L^2(\Omega))$ in $W^{s,p}(0,T;L^2(\Omega))$,
and by $W^{s,p}_R(0,T;L^2(\Omega))$ the closure of $C^\infty_R(0,T;L^2(\Omega))$ in $W^{s,p}(0,T;L^2(\Omega))$, with
$$
C^\infty_R(0,T;L^2(\Omega)):=\{v=w|_{(0,T)}: w\in C^\infty({\mathbb R};L^2(\Omega)): \mbox{supp}(w)\subset (-\infty,T]\}.
$$
By Sobolev embedding, for $v\in W_L^{s,p}(0,T;L^2(\Omega))$, there holds $v^{(j)}(0)=0$ for $j=0,\ldots,[s]-1$ (with $[s]$ being
the integral part of $s>0$), and also $v^{(j)}=0$ if $(s-[s])p>1$. For $v\in W_L^{s,p}(0,T;L^2(\Omega))$, the zero extension
of $v$ to the left belongs to $ W^{s,p}(-\infty,T;L^2(\Omega))$, and $W_L^{s,p}(0,T;L^2(\Omega))=W^{s,p}(0,T;L^2(\Omega))$, if
$s<1/p$. We abbreviate $W^{s,2}_L(0,T;L^2(\Omega))$ as $H^s_L(0,T;L^2(\Omega))$, and likewise $H^s_R(0,T;L^2(\Omega))$ for
$W^{s,2}_R(0,T;L^2(\Omega))$.

Similar to the left-sided fractional derivative $\Dal u$ in \eqref{eqn:RLderive}, the right-sided Riemann-Liouville fractional
derivative $\dDal v(t)$ in \eqref{eqn:fde-adjoint} is defined by
\begin{align}\label{eqn:fracderiv}
\dDal v(t):= - \frac{1}{\Gamma(1-\al)} \frac{\d}{\d t}\int_t^T(s-t)^{-\al}v(s)\, \d s .
\end{align}
Let any $p\in(1,\infty)$ and $p'\in(1,\infty)$ be conjugate to each other, i.e., $1/p+1/p'=1$.
Since for $u\in W^{\alpha,p}_L(0,T;L^2(\Omega)),v\in W^{\alpha,p'}_R(0,T;L^2(\Omega))$, we have
$_0\partial_t^\alpha u\in L^p(0,T;L^2(\Omega)), {_t\partial_T^\alpha} v\in L^{p'}(0,T;L^2(\Omega))$.
Thus, there holds \cite[p. 76, Lemma 2.7]{KilbasSrivastavaTrujillo:2006}:
\begin{equation}\label{eqn:adjoint}
\int_0^T ({_0\partial_t^\alpha u(t)}) v(t) \d t = \int_0^T u(t) ({_t\partial_T^\alpha v(t)}) \d t ,
\ \ \forall u \in W^{\alpha,p}_L(0,T;L^2(\Omega)),\, v \in  W^{\alpha,p'}_R(0,T;L^2(\Omega)).
\end{equation}

\subsection{Regularity of the direct problem}
The next maximal $L^p$-regularity holds \cite{Bajlekov:2001}, and an analogous result holds for
 \eqref{eqn:fde-adjoint}.
\begin{lemma} \label{lem:max-lp}
If $u_0=0$ and $f\in L^p(0,T;L^2(\Omega))$ with $1<p<\infty$, then \eqref{eqn:fde-forward}
has a unique solution $u\in L^p(0,T; H_0^1(\Omega)\cap H^2(\Omega))$ such that $\Dal u\in L^p(0,T;L^2(\Omega))$ such that
\begin{equation*}
\|u\|_{L^p(0,T; H^2(\Omega))}
+\|\Dal u\|_{L^p(0,T;L^2(\Omega))}
\le c\|f\|_{L^p(0,T;L^2(\Omega))},
\end{equation*}
where the constant $c$ is independent of $f$ and $T$.
\end{lemma}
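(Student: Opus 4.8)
The plan is to prove the estimate by the operator-valued Fourier multiplier method, taking advantage of the fact that $L^2\II$ is a Hilbert space, so that $R$-boundedness collapses to uniform norm boundedness. I would write $A=-\Delta$ with domain $H_0^1\II\cap H^2\II$; since $A$ is self-adjoint and positive, it admits a spectral resolution with eigenvalues $0<\lambda_1\le\lambda_2\le\cdots$. First I would extend $f$ by zero from $(0,T)$ to all of $\mathbb{R}$ and recast \eqref{eqn:fde-forward} on the whole line, using that the Riemann--Liouville derivative with lower limit $0$ agrees, on causal functions vanishing for $t<0$, with the Weyl derivative whose temporal Fourier symbol is $(i\xi)^\alpha$. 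Taking the Fourier transform in time then yields $\widehat u(\xi)=\big((i\xi)^\alpha+A\big)^{-1}\widehat f(\xi)$, so that the map $f\mapsto Au$ carries the operator-valued symbol $M(\xi)=A\big((i\xi)^\alpha+A\big)^{-1}$. It then suffices to show that $M(\xi)$ and $\xi M'(\xi)$ are uniformly bounded in $\mathcal{L}(L^2\II)$ for $\xi\ne 0$; Mikhlin's operator-valued multiplier theorem (applicable since $L^2\II$ is UMD and, being Hilbert, uniform boundedness is equivalent to $R$-boundedness) then delivers the $L^p(\mathbb{R};L^2\II)$-boundedness of $f\mapsto Au$, with a constant depending only on $p$ and $\alpha$.

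Second, using the spectral decomposition of $A$, both $M(\xi)$ and $\xi M'(\xi)$ act diagonally, so the required bounds reduce to estimates on the scalar symbols $m(\xi,\lambda)=\lambda/\big((i\xi)^\alpha+\lambda\big)$ and $\xi\,\partial_\xi m$, uniform over the eigenvalues $\lambda\ge\lambda_1>0$. The decisive observation is that, because $\alpha\in(0,1)$, the quantity $(i\xi)^\alpha=|\xi|^\alpha e^{\pm i\alpha\pi/2}$ lies in a fixed sector strictly inside the right half-plane, whence there is a constant $c_\alpha>0$ with $|(i\xi)^\alpha+\lambda|\ge c_\alpha(|\xi|^\alpha+\lambda)$ for all $\lambda\ge 0$. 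This gives $|m(\xi,\lambda)|\le c_\alpha^{-1}$ at once, and after differentiating, $|\xi\,\partial_\xi m|=\alpha\lambda|\xi|^\alpha/|(i\xi)^\alpha+\lambda|^2\le (\alpha/c_\alpha^{2})\,\lambda|\xi|^\alpha/(|\xi|^\alpha+\lambda)^2\le \alpha/(4c_\alpha^{2})$ by the arithmetic--geometric mean inequality. Both bounds are uniform in $\lambda$ and $\xi$, which verifies the hypotheses of the multiplier theorem.

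Third, I would transfer the estimate back to the finite interval. Zero extension is an isometry into $L^p(\mathbb{R};L^2\II)$ and the multiplier is defined on the whole line, so restricting to $(0,T)$ gives $\|\Delta u\|_{L^p(0,T;L^2\II)}=\|Au\|_{L^p(0,T;L^2\II)}\le c\|f\|_{L^p(0,T;L^2\II)}$ with $c$ independent of $T$. Full elliptic regularity on the convex polyhedral domain $\Om$ then upgrades this to the $L^p(0,T;H^2\II)$ bound, and the identity $\Dal u=f+\Delta u$ controls $\|\Dal u\|_{L^p(0,T;L^2\II)}$ by the triangle inequality. Uniqueness is immediate, since any solution must satisfy $\widehat u=\big((i\xi)^\alpha+A\big)^{-1}\widehat f$.

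The step I expect to demand the most care is the causality and well-posedness bookkeeping: one must confirm that the whole-line solution produced by Fourier inversion is supported in $[0,\infty)$, so that its restriction genuinely solves the original initial-value problem with lower limit $0$; this follows from the analyticity of $z\mapsto(z^\alpha+A)^{-1}$ in the right half-plane together with a Paley--Wiener argument. One must also check that the resulting constant is truly independent of $T$, which is exactly what the passage through $\mathbb{R}$ secures. By comparison, the multiplier estimates themselves are short once the sectorial bound $|(i\xi)^\alpha+\lambda|\ge c_\alpha(|\xi|^\alpha+\lambda)$ is established.
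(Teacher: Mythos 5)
Your proof is correct and follows essentially the same route the paper itself uses: the paper states Lemma \ref{lem:max-lp} only by citation to \cite{Bajlekov:2001}, but its own proof of Theorem \ref{thm:reg} is exactly your argument specialized to $s=0$ --- zero extension in time, the operator-valued Fourier symbol $\Delta((i\xi)^\alpha-\Delta)^{-1}$, and Weis's multiplier theorem with $R$-boundedness collapsing to uniform boundedness on the Hilbert space $L^2(\Omega)$. Your spectral reduction to the scalar symbols, the sectorial bound $|(i\xi)^\alpha+\lambda|\ge c_\alpha(|\xi|^\alpha+\lambda)$, and the Paley--Wiener causality check simply make explicit the estimates that the paper leaves implicit.
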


Now we give a regularity result. 
\begin{theorem}\label{thm:reg}
For $g\in {W^{s,p}}( {0,T;{L^2}(\Omega )})$,  $s\in [0,{1}/{p})$ and $p\in(1,\infty)$, problem \eqref{eqn:fde-forward} has a unique solution
$u\in W^{\alpha+s,p}( {0,T;{L^2}(\Omega )}) \cap W^{s,p}(0,T;H_0^1(\Omega) \cap H^2(\Omega))$, which satisfies
\begin{equation*}
  \|u\|_{W^{\alpha+s,p} (0,T;L^2(\Omega))} + \|u\|_{W^{s,p}(0,T;H^2(\Omega))}\leq c\|g\|_{W^{s,p} (0,T;L^2(\Omega))}.
\end{equation*}
Similarly, for $\eta\in W^{s,p}(0,T;L^2(\Omega ))$,  $s\in [0,{1}/{p})$ and $p\in(1,\infty)$, problem \eqref{eqn:fde-adjoint} has
a unique solution $z\in W^{\alpha+s,p}(0,T;L^2(\Omega )) \cap W^{s,p}(0,T;H_0^1(\Omega) \cap H^2(\Omega))$, which satisfies
\begin{equation*}
  \|z\|_{W^{\alpha+s,p} (0,T;L^2(\Omega))} + \|z\|_{W^{s,p}(0,T;H^2(\Omega))}\leq c\|\eta\|_{W^{s,p} (0,T;L^2(\Omega))}.
\end{equation*}
\end{theorem}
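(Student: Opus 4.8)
The plan is to deduce the theorem from the maximal $L^p$-regularity of Lemma \ref{lem:max-lp} by an argument that lifts temporal smoothness through fractional integration, using in an essential way the convolution structure of the solution operator and the coincidence $W_L^{s,p}(0,T;L^2(\Omega))=W^{s,p}(0,T;L^2(\Omega))$ valid for $s\in[0,1/p)$. It suffices to treat the direct problem \eqref{eqn:fde-forward}: the adjoint problem \eqref{eqn:fde-adjoint} follows by the reflection $t\mapsto T-t$, which turns the right-sided derivative $\dDal$ into the left-sided $\Dal$ and carries $W^{s,p}_R(0,T;L^2(\Omega))$ onto $W^{s,p}_L(0,T;L^2(\Omega))$, so every estimate below transfers verbatim.

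First I would settle the case $s=0$. Lemma \ref{lem:max-lp} already provides uniqueness together with $\Dal u\in L^p(0,T;L^2(\Omega))$, $u\in L^p(0,T;H_0^1(\Omega)\cap H^2(\Omega))$, and the bound $\|\Dal u\|_{L^p(0,T;L^2(\Omega))}+\|u\|_{L^p(0,T;H^2(\Omega))}\le c\|g\|_{L^p(0,T;L^2(\Omega))}$. Since $u(0)=0$, one recovers $u$ from its Riemann--Liouville derivative by fractional integration, $u={_0I_t^{\alpha}}(\Dal u)$, and I would invoke the mapping property that ${_0I_t^{\sigma}}$ is bounded from $L^p(0,T;L^2(\Omega))$ into $W^{\sigma,p}_L(0,T;L^2(\Omega))$, i.e.\ fractional integration of order $\sigma$ raises temporal smoothness by $\sigma$. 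This yields $u\in W^{\alpha,p}_L(0,T;L^2(\Omega))$ with $\|u\|_{W^{\alpha,p}(0,T;L^2(\Omega))}\le c\|\Dal u\|_{L^p(0,T;L^2(\Omega))}\le c\|g\|_{L^p(0,T;L^2(\Omega))}$, which is precisely the claim for $s=0$.

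For general $s\in(0,1/p)$ I would exploit that the solution is a temporal convolution $u=G*g$ with a fixed operator-valued kernel $G$, so that fractional differentiation in time commutes with the solution map. Given $g\in W^{s,p}(0,T;L^2(\Omega))=W^{s,p}_L(0,T;L^2(\Omega))$, the order-$s$ derivative ${_0\partial_t^{s}}g$ lies in $L^p(0,T;L^2(\Omega))$, and $w:={_0\partial_t^{s}}u=G*({_0\partial_t^{s}}g)$ is exactly the $s=0$ solution driven by the source ${_0\partial_t^{s}}g$. Applying the $s=0$ estimate to $w$ gives $w\in W^{\alpha,p}(0,T;L^2(\Omega))\cap L^p(0,T;H^2(\Omega))$ with norm controlled by $\|{_0\partial_t^{s}}g\|_{L^p(0,T;L^2(\Omega))}\le c\|g\|_{W^{s,p}(0,T;L^2(\Omega))}$. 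Finally, since $u={_0I_t^{s}}w$, I would integrate back: the lifting property ${_0I_t^{s}}\colon W^{\beta,p}_L\to W^{\beta+s,p}_L$ (with $\beta=\alpha$ for the $L^2(\Omega)$-estimate and $\beta=0$ for the $H^2(\Omega)$-estimate) converts the bounds on $w$ into $\|u\|_{W^{\alpha+s,p}(0,T;L^2(\Omega))}+\|u\|_{W^{s,p}(0,T;H^2(\Omega))}\le c\|g\|_{W^{s,p}(0,T;L^2(\Omega))}$, completing the proof.

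The main obstacle is to establish rigorously the two purely temporal ingredients on which everything rests: the boundedness of the Riemann--Liouville fractional integral as a smoothing operator ${_0I_t^{\sigma}}\colon W^{\beta,p}_L(0,T;L^2(\Omega))\to W^{\beta+\sigma,p}_L(0,T;L^2(\Omega))$, and the commutation ${_0\partial_t^{s}}(G*g)=G*({_0\partial_t^{s}}g)$ together with the inversion identity $u={_0I_t^{s}}({_0\partial_t^{s}}u)$ for $u\in W^{s,p}_L(0,T;L^2(\Omega))$, i.e.\ the fractional fundamental theorem of calculus in these vanishing-trace spaces. Both are cleanest on the Laplace side, where fractional integration and differentiation become multiplication by $z^{-\sigma}$ and $z^{\sigma}$ and commute trivially; the restriction $s<1/p$ is exactly what guarantees $W^{s,p}_L=W^{s,p}$, so that an arbitrary datum $g\in W^{s,p}(0,T;L^2(\Omega))$ may be placed in the left space and differentiated fractionally without incurring boundary or compatibility terms.
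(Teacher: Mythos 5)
Your proposal is sound in outline, but it follows a genuinely different route from the paper's proof, so a comparison is in order. The paper proves the theorem in a single stroke: it extends $g$ by zero to $\mathbb{R}$ (this is exactly where $s<1/p$ enters, just as in your reduction $W^{s,p}_L=W^{s,p}$), writes the solution explicitly as $u=[((i\xi)^\alpha-\Delta)^{-1}\widehat g\,]^\vee$, and verifies a Mikhlin-type condition for the operator family $(1+|\xi|^2)^{\alpha/2}((i\xi)^\alpha-\Delta)^{-1}$ --- boundedness near $\xi=0$ coming from invertibility of the Dirichlet Laplacian, boundedness for large $\xi$ from the resolvent estimate --- so that Weis' operator-valued Fourier multiplier theorem (with $R$-boundedness reducing to plain boundedness in the Hilbert space $L^2(\Omega)$) delivers the $W^{\alpha+s,p}(0,T;L^2(\Omega))$ and, with the multiplier $\Delta((i\xi)^\alpha-\Delta)^{-1}$, the $W^{s,p}(0,T;H^2(\Omega))$ bounds simultaneously. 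You instead delegate all spatial/operator-theoretic content to the quoted maximal $L^p$-regularity lemma and move temporal smoothness around with the scalar operators ${_0I_t^{\sigma}}$ and ${_0\partial_t^{s}}$, exploiting commutativity of causal convolutions; your treatment of the adjoint problem by the reflection $t\mapsto T-t$ makes explicit what the paper leaves as ``similarly''. What your route buys is the avoidance of operator-valued harmonic analysis beyond the cited lemma; what it costs is that the scalar lifting and inversion properties you flag as the ``main obstacle'' really are the crux, and they are more delicate than ``multiplication by $z^{\pm\sigma}$ on the Laplace side'' suggests: the Fourier symbol $(1+\xi^2)^{\sigma/2}(i\xi)^{-\sigma}$ of ${_{-\infty}I_t^{\sigma}}$ is singular at $\xi=0$, so the whole-line lifting statement is false, and one must use the finite interval (e.g., truncate the kernel $t_+^{\sigma-1}$ outside $[0,2T]$, which leaves ${_0I_t^{\sigma}}f$ unchanged on $(0,T)$ and yields a symbol bounded near $\xi=0$); by contrast, the paper's combined multiplier needs no such repair precisely because $\Delta^{-1}$ exists. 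Finally, be aware that your two key steps ${_0\partial_t^{s}}:W^{s,p}_L\to L^p$ and ${_0I_t^{\sigma}}:L^p\to W^{\sigma,p}_L$ tacitly identify the Sobolev--Slobodecki\v{\i} scale with the Bessel-potential (image-of-fractional-integral) scale; these coincide only for $p=2$, and for $p\neq2$ one direction of each embedding fails. This is not a defect relative to the paper --- the paper's own chain of inequalities makes the same identification (its first inequality needs $p\ge2$, its last needs $p\le2$), and in both arguments the mismatch is harmless for how the theorem is used (Remark \ref{rem:ext-reg} only invokes $p=2$ or cases with an $\epsilon$ of room) --- but a fully rigorous write-up of either proof for general $p\in(1,\infty)$ would need Besov-space multiplier theorems or an explicit $\epsilon$-loss.
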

\begin{proof}
For $g\in W^{s,p}(0,T;L^2(\Omega))$,  $s\in [0,{1}/{p})$ and $p\in(1,\infty)$,
extending $g$ to be zero on $\Omega\times[(-\infty,0)\cup(T,\infty)]$ yields $g\in  W^{s,p}({\mathbb R};L^2(\Omega))$ and
\begin{equation}\label{eqn:Wsp-extension}
  \|g\|_{W^{s,p} ({\mathbb R};L^2(\Omega))}
  \le c\|g\|_{W^{s,p} (0,T;L^2(\Omega))} .
\end{equation}
For the zero extension to the left, we have the identity
$_0\partial_t^\alpha g(t) = {_{-\infty}\partial_t^\alpha} g(t)$ for $t\in[0,T],$
and there holds the relation $\widehat {_{-\infty}\partial_t^\alpha g} = (i\xi)^\alpha\widehat g(\xi)$
\cite[p. 90]{KilbasSrivastavaTrujillo:2006}, where ~$\widehat{~}$~ denotes taking Fourier transform in
 $t$, and $\widehat g$ the Fourier transform of $g$. Then, with $^\vee$ being the inverse Fourier
transform in $\xi$, $u= [((i\xi)^\alpha-\Delta)^{-1}\widehat g(\xi)]^\vee$ is a solution
of \eqref{eqn:fde-forward} and
\begin{equation*}
(1+|\xi|^2)^{\frac{\alpha+s}{2}}\widehat u(\xi)= (1+|\xi|^2)^{\frac{\alpha}{2}}((i\xi)^\alpha-\Delta)^{-1}(1+|\xi|^2)^{\frac{s}{2}}\widehat g(\xi) .
\end{equation*}

Let $\Delta$ be the Dirichlet Laplacian, with domain $D(\Delta) = H_0^1(\Omega)\cap H^2(\Omega)$.
Then the self-adjoint operator is invertible from
$L^2(\Omega)$ to $D(\Delta)$, and generates a bounded analytic semigroup \cite[Example 3.7.5]{ABHN}. Thus the operator
\begin{align}\label{BD-multp-1}
(1+|\xi|^2)^{\frac{\alpha}{2}}((i\xi)^\alpha-\Delta)^{-1}
\end{align}
is bounded from $L^2(\Omega)$ to $D(\Delta)$ in a small neighborhood $\mathcal{N}$ of $\xi=0$. Further, in $\mathcal{N}$, the operator
\begin{align}\label{BD-multp-2}
\xi\frac{\d}{\d\xi}(1+|\xi|^2)^{\frac{\alpha}{2}}((i\xi)^\alpha-\Delta)^{-1}
=& \frac{\alpha |\xi|^2}{1+|\xi|^2} (1+|\xi|^2)^{\frac{\alpha}{2}}((i\xi)^\alpha-\Delta)^{-1} \nonumber \\
&+\alpha (1+|\xi|^2)^{\frac{\alpha}{2}}((i\xi)^\alpha-\Delta)^{-1}
 (i\xi)^{\alpha}((i\xi)^\alpha-\Delta)^{-1}
\end{align}
is also bounded. If $\xi$ is away from $0$, then
\begin{equation*}
(1+|\xi|^2)^{\frac{\alpha}{2}}((i\xi)^\alpha-\Delta)^{-1}= (i\xi)^{-\alpha}(1+|\xi|^2)^{\frac{\alpha}{2}}(i\xi)^{\alpha}((i\xi)^\alpha-\Delta)^{-1} ,
\end{equation*}
and in the latter case the inequalities
\begin{equation*}
|(i\xi)^{-\alpha}(1+|\xi|^2)^{\frac{\alpha}{2}}|\leq c \quad \mbox{and}\quad \|(i\xi)^{\alpha}((i\xi)^\alpha-\Delta)^{-1}\|\leq c
\end{equation*}
imply the boundedness of \eqref{BD-multp-1} and \eqref{BD-multp-2}.

Since boundedness of operators is equivalent to $R$-boundedness of operators in $L^2(\Omega)$ (see \cite{KunstmannWeis:2004}
for the concept of $R$-boundedness), the boundedness of \eqref{BD-multp-1} and \eqref{BD-multp-2} implies that
\eqref{BD-multp-1} is an operator-valued Fourier multiplier \cite[Theorem 3.4]{Weis:2001}, and thus
\begin{equation*}
\begin{aligned}
\|u\|_{W^{\alpha+s,p}({\mathbb R};L^2(\Omega))}
&\le \|[(1+|\xi|^2)^{\frac{\alpha+s}{2}}\widehat u(\xi)]^\vee\|_{L^p({\mathbb R};L^2(\Omega))} \\
&=\|[(1+|\xi|^2)^{\frac{\alpha}{2}}((i\xi)^\alpha-\Delta)^{-1}(1+|\xi|^2)^{\frac{s}{2}}\widehat g(\xi)]^\vee\|_{L^p({\mathbb R};L^2(\Omega))} \\
&\le c\|[(1+|\xi|^2)^{\frac{s}{2}}\widehat g(\xi)]^\vee\|_{L^p({\mathbb R};L^2(\Omega))} \le c\|g\|_{W^{s,p}({\mathbb R};L^2(\Omega))} .
\end{aligned}
\end{equation*}
This and  \eqref{eqn:Wsp-extension} implies the desired bound on $\|u\|_{W^{\alpha+s,p}(0,T;L^2(\Omega))}$.
The estimate
\begin{equation*}
  \|u\|_{W^{s,p}(0,T;H^2(\Omega))}\le c\|g\|_{W^{s,p}(0,T;L^2(\Omega))}
\end{equation*}
follows similarly by replacing $(1+|\xi|^2)^{\frac{\alpha}{2}}((i\xi)^\alpha-\Delta)^{-1}$ with
$\Delta((i\xi)^\alpha-\Delta)^{-1}$ in the proof.
\end{proof}

\begin{remark}\label{rem:ext-reg}
Below we only use the cases ``$p=2$, $s=\min(1/2-\epsilon,\alpha-\epsilon)$'' and ``$p>\max({1}/{\alpha},{1}/(1-\alpha))$,
$s={1}/{p}-\epsilon$'' of Theorem \ref{thm:reg}. Both cases satisfy the conditions of Theorem \ref{thm:reg}.
A similar assertion holds for the more general case $g\in W_L^{s,p}(0,T;L^2(\Omega))$, $s>0$ and $1<p<\infty$:
\begin{equation*}
  \|u\|_{W^{\alpha+s,p} (0,T;L^2(\Omega))} + \|u\|_{W^{s,p}(0,T;H^2(\Omega))}\leq c\|g\|_{W_L^{s,p} (0,T;L^2(\Omega))}.
\end{equation*}
In fact, for $g\in W_L^{s,p}(0,T;L^2(\Omega))$, the zero extension of $g$ to $t\leq 0$ belongs to $W^{s,p}(-\infty,T;L^2(\Omega))$,
which can further be boundedly extended to a function in $W^{s,p}(\mathbb{R};L^2(\Omega))$. Then the argument in
Theorem \ref{thm:reg} gives the desired assertion. This also indicates a certain compatibility condition for regularity pickup.
\end{remark}

\subsection{Numerical scheme}\label{sec:error-direct}
Now we describe numerical treatment of the forward problem \eqref{eqn:fde-forward}, which forms the basis for the
fully discrete scheme of problem \eqref{eqn:ob}--\eqref{eqn:fde} in Section
\ref{sec:error}. 
We denote by $\mathcal{T}_h$ a shape-regular and quasi-uniform triangulation of the
domain $\Omega $ into $d$-dimensional simplexes, and let
\begin{equation*}
  X_h= \left\{v_h\in H_0^1(\Omega):\ v_h|_K \mbox{ is a linear function},\ \forall\, K \in \mathcal{T}_h\right\}
\end{equation*}
be the finite element space consisting of continuous piecewise linear functions.
The $L^2(\Omega)$-orthogonal projection $P_h:L^2(\Omega)\to X_h$ is defined by
$ (P_h \fy,\chi_h)  =(\fy,\chi_h)$, for all $\fy\in L^2(\Omega), \chi_h\in X_h,$
where $(\cdot,\cdot)$ denotes the $L^2\II$ inner product. Then the spatially semidiscrete Galerkin FEM for
 problem \eqref{eqn:fde-forward} is to find $u_h(t)\in X_h$ such that $u_h(0)=0$ and
\begin{equation}\label{eqn:fem}
(\Dal u_h(t),\chi_h) + (\nabla u_h(t),\nabla \chi_h) = (g(t),\chi_h),\quad\forall \chi_h\in X_h,\,\,\,\forall\, t\in(0,T], \\
\end{equation}
By introducing the discrete Laplacian $\Delta_h: X_h\to X_h$, defined by
$  -(\Delta_h\fy_h,\chi_h)=(\nabla\fy_h,\nabla\chi_h),$ for all $\fy_h,\,\chi_h\in X_h,$
problem \eqref{eqn:fem} can be written as
\begin{equation}\label{eqn:fdesemidis}
\Dal u_h(t) -\Delta_h u_h(t) =P_hg(t), \quad\forall\, t\in(0,T] ,\quad \mbox{with }u_h(0)=0.
\end{equation}
Similar to Theorem \ref{thm:reg},  for $s<{1}/{p}$ there holds
\begin{equation}\label{eqn:stability-semi}
  \|u_h\|_{W^{\alpha+s,p}(0,T;L^2(\Omega))}+ \|\Delta_h u_h\|_{W^{s,p}(0,T;L^2(\Omega))} \leq c \|g\|_{W^{s,p}(0,T;L^2(\Omega))} ,
\end{equation}
where the constant $c$ is independent of $h$ (following the proof of Theorem \ref{thm:reg}). Lemma \ref{lem:max-lp}
and Remark \ref{rem:ext-reg} remain valid for the semidiscrete solution $u_h$, e.g.,
\begin{equation}\label{eqn:max-lp-h}
\|\Delta_hu_h\|_{L^p(0,T;L^2(\Omega))}
+\|\Dal u_h\|_{L^p(0,T;L^2(\Omega))}
\le c\|f\|_{L^p(0,T;L^2(\Omega))}.
\end{equation}
These assertions will be used extensively below without explicitly referencing.

To discretize \eqref{eqn:fdesemidis} in time, we uniformly partition $[0,T]$
with grid points $t_n=n\tau$, $n,=0,1,2,\dots,N$ and a time stepsize $\tau=T/N\leq1$, and approximate
$\Dal\varphi(t_n)$ by (with $\varphi^j=\varphi(t_j)$):
\begin{equation}\label{eqn:timestepping}
   \bPtau \varphi^n  = \tau^{-\alpha}\sum_{j=0}^n\beta_{n-j}\varphi^j,
\end{equation}
where $\beta_j$ are suitable weights. We consider
two methods: L1 scheme \cite{LinXu:2007} and backward Euler convolution quadrature (BE-CQ)
\cite{Lubich:1986}, for which $\beta_j$ are respectively given by
\begin{align*}
&\mbox{L1 scheme:}
&&\beta_0=1, \quad \mbox{and } \beta_j=(j+1)^{1-\alpha}-2j^{1-\alpha}+(j-1)^{1-\alpha},\ \ j=1,2,\ldots,N,\\
&\mbox{BE-CQ:}&&\beta_0 = 1, \quad \mbox{and } \beta_j = -\beta_{j-1}(\alpha-j+1)/j,\ \ j = 1,2,\ldots,N.
\end{align*}
Both schemes extend the classical backward Euler scheme to the fractional case.
Then we discretize problem \eqref{eqn:fde-forward} by: with $g_h^n=P_hg(t_n)$, find $U_h^n\in X_h$ such that
\begin{equation}\label{eqn:fully-variant}
    \bPtau  U_h^n -\Delta_h  U_h^n  = g_h^n,\quad n=1,2,\ldots,N,\quad \mbox{with }  U_h^0 = 0.
\end{equation}
By \cite[Section 5]{JinLiZhou:nonlinear} and \cite[Theorem 3.6]{JinLazarovZhou:SISC2016}, we have the following error bound.

\begin{lemma}\label{lem:fully-variant}
For $g\in W^{1,p}(0,T;L^2(\Omega))$, $1\leq p\leq\infty$, let $u_h$ and $U_h^n$ be the semidiscrete solution and
 fully discrete solution, respectively, in \eqref{eqn:fdesemidis} and \eqref{eqn:fully-variant}. Then there holds
\begin{equation*}
   \| U_h^n - u_h(t_n)   \|_{L^2\II} \le c\tau t_n^{\alpha-1}\|g(0)\|_{L^2(\Omega)}+c\tau \int_0^{t_n} (t_{n+1}-s)^{\al-1} \|g'(s) \|_{L^2\II} \d s.
\end{equation*}
\end{lemma}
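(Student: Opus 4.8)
The plan is to compare the two solutions through their operator representations and to reduce the global error to a single estimate for the difference between the continuous and discrete solution operators. By Duhamel's principle the semidiscrete solution of \eqref{eqn:fdesemidis} is $u_h(t)=\int_0^t \bar F_h(t-s)P_hg(s)\,\d s$, where the operator-valued kernel $\bar F_h(t)=\frac{1}{2\pi i}\int_\Gamma e^{zt}(z^\alpha-\Delta_h)^{-1}\,\d z$ is given by a Dunford--Taylor integral over a sectorial contour $\Gamma$. Expressing the time-stepping \eqref{eqn:timestepping} through its generating function $\beta(\zeta)=\sum_{j\ge 0}\beta_j\zeta^j$, the fully discrete solution \eqref{eqn:fully-variant} becomes a discrete convolution $U_h^n=\tau\sum_{j=1}^n \bar F_{h,\tau}^{\,n-j}g_h^j$, whose kernel $\bar F_{h,\tau}^{\,n}$ has the analogous contour representation with $z^\alpha$ replaced by the symbol $\tau^{-\alpha}\beta(e^{-z\tau})$ of the scheme (for BE-CQ this symbol equals $((1-e^{-z\tau})/\tau)^\alpha$; the L1 symbol is handled in the same way). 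It is convenient to also introduce the integrated operator $\bar E_h(t)=\int_0^t\bar F_h(s)\,\d s$ and its discrete counterpart $\bar E_{h,\tau}^{\,n}=\tau\sum_{k=0}^{n-1}\bar F_{h,\tau}^{\,k}$, the solution operators associated with a time-independent source.

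I would then reduce the estimate to the single operator bound
\[
\|\bar E_h(t_n)-\bar E_{h,\tau}^{\,n}\|_{L^2(\Omega)\to L^2(\Omega)}\le c\,\tau\,t_n^{\alpha-1},
\]
via the splitting $g(t)=g(0)+\big(g(t)-g(0)\big)$ with $g(t)-g(0)=\int_0^t g'(\sigma)\,\d\sigma$. For the constant part the semidiscrete and fully discrete solutions are $\bar E_h(t_n)P_hg(0)$ and $\bar E_{h,\tau}^{\,n}P_hg(0)$, so the bound above produces exactly the term $c\tau t_n^{\alpha-1}\|g(0)\|_{L^2(\Omega)}$. For the remainder, Fubini's theorem recasts the Duhamel integral as $\int_0^{t_n}\bar E_h(t_n-\sigma)P_hg'(\sigma)\,\d\sigma$, and the discrete convolution is its quadrature; subtracting and invoking the operator bound with argument $t_n-\sigma$ (the quadrature shift accounting for the weight $t_{n+1}-\sigma$) gives $c\tau\int_0^{t_n}(t_{n+1}-\sigma)^{\alpha-1}\|g'(\sigma)\|_{L^2(\Omega)}\,\d\sigma$. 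The quadrature error of the rectangle rule applied to the smooth-in-$\sigma$ integrand is of the same order and is absorbed into these two terms.

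The main obstacle is the operator estimate itself, equivalently the kernel bound $\|\bar F_h(t_n)-\bar F_{h,\tau}^{\,n}\|\le c\tau t_n^{\alpha-2}$ before integration. Its proof rests on two ingredients, both uniform in $h$: the consistency of the quadrature symbol, $|z^\alpha-\tau^{-\alpha}\beta(e^{-z\tau})|\le c\tau|z|^{\alpha+1}$ on $\Gamma$, which reflects the first-order accuracy of the underlying backward Euler method; and the uniform sectoriality of the discrete Laplacian, $\|(w-\Delta_h)^{-1}\|\le c/|w|$ for $w$ in a sector containing the range of $z\mapsto z^\alpha$. Subtracting the two resolvents by the second resolvent identity, one bounds the contour integral after truncating $\Gamma$ at $|z|\sim 1/\tau$ and balancing the remaining integral on $|z|\sim 1/t_n$. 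The delicate points are controlling the integrand near the truncation $|z|\sim 1/\tau$ and ensuring that the symbol estimate and resolvent bound hold with constants independent of $h$; for the L1 scheme the symbol $\tau^{-\alpha}\beta(e^{-z\tau})$ is less explicit than for BE-CQ, so both its consistency and its sectoriality require separate verification. Once this kernel estimate is in place, together with the stability of $\bar E_{h,\tau}^{\,n}$ it yields the two claimed terms and completes the proof.
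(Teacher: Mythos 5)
Your proposal is correct and takes essentially the same route as the paper, which delegates this lemma to the proofs in \cite{JinLiZhou:nonlinear} (Section 5) and \cite{JinLazarovZhou:SISC2016} (Theorem 3.6) --- precisely the Laplace-transform/contour-integral analysis you outline: discrete solution operators built from the generating function of the scheme, the splitting $g(t)=g(0)+\int_0^t g'(\sigma)\,\mathrm{d}\sigma$, the operator error bound $\|\bar E_h(t_n)-\bar E_{h,\tau}^{\,n}\|\le c\tau t_n^{\alpha-1}$ for constant data, and the shifted weight $(t_{n+1}-s)^{\alpha-1}$ obtained exactly as in the paper's remark immediately following the lemma. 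One minor caution: your aside that this operator bound is ``equivalently'' the kernel bound $c\tau t_n^{\alpha-2}$ is loose, since that kernel bound is not integrable at $t=0$ (naive integration only yields $O(\tau^{\alpha})$, which is weaker than $O(\tau t_n^{\alpha-1})$); the integrated bound must be, and in the cited proofs is, established directly on the contour, as your truncation-and-balancing argument in fact does.
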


\begin{remark}
Lemma \ref{lem:fully-variant} slightly refines the estimates in \cite{JinLiZhou:nonlinear,JinLazarovZhou:SISC2016},
but can be proved in the same way using the following estimates in the proof of \cite[Section 5]{JinLiZhou:nonlinear}:
\begin{equation*}
   t_n^{\alpha-1}\le c (t+\tau)^{\alpha-1} \ \
\text{and}
  \ \  \int_t^{t_n} s^{\alpha-1} \d s \le c (t+\tau)^{\alpha-1} ,
\,\,\,
\mbox{ for\, $t\in[t_{n-1},t_n]$\, and \,$n=1,\ldots,N$.}
\end{equation*}
\end{remark}

For any Banach space $X$, we define
$$
\|(U_h^n)_{n=0}^N\|_{\ell^p(X)}:=
\left\{
\begin{aligned}
&\bigg(\sum_{n=0}^N\tau\|U_h^n\|_{X}^p\bigg)^{{1}/{p}}  &&\mbox{if}\,\,\, 1\le p<\infty,\\
&\max_{0\le n\le N}\|U_h^n\|_{X} &&\mbox{if}\,\,\, p=\infty .
\end{aligned}\right.
$$
Then the maximal $\ell^p$-regularity estimate holds for \eqref{eqn:fully-variant} \cite[Theorems 5 and 7]{JinLiZhou:max-reg}.
\begin{lemma}\label{lem:max-reg}
The solutions $(U_h^n)_{n=1}^N$ of \eqref{eqn:fully-variant} satisfy the following estimate:
\begin{align*}
\|(\bPtau U_h^n)_{n=1}^N\|_{\ell^p(L^2(\Omega))} + \|(\Delta_h U_h^n)_{n=1}^N\|_{\ell^p(L^2(\Omega))}
\leq c_{p}\|(g_h^n)_{n=1}^N\|_{\ell^p(L^2(\Omega))},
\quad\forall\, 1<p<\infty.
\end{align*}
\end{lemma}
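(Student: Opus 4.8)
The plan is to prove the estimate by a discrete operator-valued Fourier multiplier argument on $\ell^p(\mathbb{Z};L^2(\Om))$, mirroring the continuous proof behind Lemma \ref{lem:max-lp} and Theorem \ref{thm:reg}. Since the weights $\beta_j$ in \eqref{eqn:timestepping} vanish for $j<0$ and $U_h^0=0$, the scheme \eqref{eqn:fully-variant} is causal; I would first extend the data by $g_h^n=0$ for $n\le0$ and read \eqref{eqn:fully-variant} as an equation for a bi-infinite sequence $(U_h^n)_{n\in\mathbb{Z}}$. Its solution vanishes for $n\le0$ and agrees with the original one for $n\ge1$, so it suffices to bound $\|(\bPtau U_h^n)_{n\in\mathbb{Z}}\|_{\ell^p(L^2(\Om))}$ and $\|(\Delta_h U_h^n)_{n\in\mathbb{Z}}\|_{\ell^p(L^2(\Om))}$ by $\|(g_h^n)_{n\in\mathbb{Z}}\|_{\ell^p(L^2(\Om))}$.

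Next I would pass to the generating function. Writing $\zeta=e^{-i\theta}$ with $\theta\in(-\pi,\pi]$ and denoting by $\delta_\tau(\zeta)$ the symbol of the time discretization — namely $\delta_\tau(\zeta)=\tau^{-1}(1-\zeta)$ for BE-CQ and the power series $\tau^{-\alpha}\sum_{j\ge0}\beta_j\zeta^j$ for the L1 scheme — the operator $\bPtau$ becomes multiplication by $\delta_\tau(\zeta)^\alpha$. Hence the solution operator of \eqref{eqn:fully-variant} is the Fourier multiplier $(\delta_\tau(\zeta)^\alpha-\Delta_h)^{-1}$, and the two quantities to be controlled correspond to the operator-valued symbols
\begin{equation*}
M_1(\theta)=\delta_\tau(\zeta)^\alpha\big(\delta_\tau(\zeta)^\alpha-\Delta_h\big)^{-1},
\qquad
M_2(\theta)=\Delta_h\big(\delta_\tau(\zeta)^\alpha-\Delta_h\big)^{-1}.
\end{equation*}
By the discrete operator-valued multiplier theorem (Blunck's $\ell^p$ analogue of the Mikhlin--Weis theorem), it suffices to check a Marcinkiewicz-type condition: that the families $\{M_i(\theta)\}$ and their weighted derivatives are $R$-bounded on $L^2(\Om)$, uniformly in $\tau$. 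As $L^2(\Om)$ is a Hilbert space, $R$-boundedness collapses to uniform norm boundedness.

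The heart of the matter is the uniform sectoriality of the discrete symbol. I would show that $\delta_\tau(\zeta)^\alpha$ lies in a fixed sector $\Sigma_\phi=\{z\ne0:|\arg z|<\phi\}$ with $\phi<\pi/2$, uniformly in $\tau$ and $\theta$. For BE-CQ this is transparent, since $1-e^{-i\theta}$ lies in $\{|\arg z|\le\pi/2\}$, so $\delta_\tau(\zeta)^\alpha$ lies in $\{|\arg z|\le\alpha\pi/2\}$, a sector strictly inside the right half-plane and hence separated from the spectrum of $\Delta_h$ on the negative real axis; the L1 symbol must be shown to enjoy the same property. Granting this, the self-adjointness and negativity of $\Delta_h$ yield the resolvent bound $\|(z-\Delta_h)^{-1}\|\le c|z|^{-1}$ for $z\in\Sigma_\phi$, whence $\|M_1(\theta)\|,\|M_2(\theta)\|\le c$ uniformly. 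The weighted-derivative bounds then follow by differentiating $\delta_\tau(\zeta)^\alpha$ in $\theta$ and invoking the resolvent identity together with the same sectoriality, the Marcinkiewicz weight being chosen precisely to absorb the factor $\delta_\tau(\zeta)^{-1}$ that blows up as $\theta\to0$.

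The main obstacle I anticipate is the uniform-in-$\tau$ control near $\theta=0$ (equivalently $\zeta\to1$), where $\delta_\tau(\zeta)\to0$ and the discrete symbol degenerates to the continuous fractional symbol $(i\xi)^\alpha$ of Theorem \ref{thm:reg}; here one must show that the bounds on $M_1,M_2$ and their weighted derivatives hold with a constant independent of the vanishing stepsize. A secondary difficulty is specific to the L1 scheme: it is not a Lubich convolution quadrature, so $\sum_{j\ge0}\beta_j\zeta^j$ has no simple closed form, and establishing its sectoriality and the requisite derivative estimates demands a direct asymptotic analysis of the partial sums of the weights $\beta_j$.
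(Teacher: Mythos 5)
First, note that the paper does not prove this lemma at all: it is imported verbatim from \cite[Theorems 5 and 7]{JinLiZhou:max-reg}, and the proofs there follow essentially the route you outline --- extend to (bi-)infinite sequences using causality, pass to generating functions, regard the solution map as a discrete operator-valued Fourier multiplier, and verify the hypotheses of Blunck's discrete multiplier theorem, with $R$-boundedness collapsing to uniform norm boundedness because $L^2(\Omega)$ is a Hilbert space. So your framework coincides with that of the source of the result.

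The gaps are in the execution, and your assessment of where the difficulty lies is inverted. What you call the ``main obstacle'' --- uniform-in-$\tau$ control as $\theta\to0$ --- is not an obstacle: the resolvent estimate $\|(z-\Delta_h)^{-1}\|\le c_\phi|z|^{-1}$ for $z$ in a fixed sector is scale-invariant, so once sectoriality of the symbol is known with a $\tau$-independent angle, the bounds on $M_1$, $M_2$ and their weighted derivatives are automatically uniform in $\tau$ and $\theta$; nothing degenerates as $\delta_\tau(\zeta)\to0$ (and here $\Delta_h$ is even invertible). By contrast, what you call ``secondary'' --- the L1 symbol --- is the entire mathematical content of \cite[Theorem 7]{JinLiZhou:max-reg}: one needs the closed-form representation of the L1 generating function as a constant multiple of $\tau^{-\alpha}(1-\zeta)^2\zeta^{-1}\mathrm{Li}_{\alpha-1}(\zeta)$ and a genuine analysis of this polylogarithm on the unit circle (as in \cite{JinLazarovZhou:SISC2016,JinLiZhou:max-reg}) to obtain sectoriality and the derivative bounds; leaving this as an anticipated difficulty leaves the L1 half of the lemma unproven. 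Two further corrections: (i) your requirement that the symbol lie in a sector of half-angle $\phi<\pi/2$ is both unnecessary and unjustified for L1 --- what the cited analysis provides, and all the argument needs, is $\phi<\pi$, which suffices because $\Delta_h$ is self-adjoint and nonpositive, so the resolvent bound above holds on every sector of half-angle strictly less than $\pi$; if your argument genuinely required $\phi<\pi/2$, it would be stuck for the L1 scheme. (ii) For L1 you define $\delta_\tau(\zeta)$ to be the full series $\tau^{-\alpha}\sum_{j\ge0}\beta_j\zeta^j$ and then declare the symbol of $\bPtau$ to be $\delta_\tau(\zeta)^\alpha$; that series is already the symbol of the $\alpha$-order difference operator and must not be raised to the power $\alpha$ again. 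Finally, to reduce the finite system to one on $\ell^p(\mathbb{Z};L^2(\Omega))$ you must also extend $g_h^n$ by zero for $n>N$ (not only for $n\le0$) and invoke causality to see that the extended and original solutions agree for $1\le n\le N$.
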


\subsection{Error estimates}\label{ssec:error}
Now we present $\ell^p(L^2\II)$ error estimates for $g\in W^{s,p}(0,T;L^2(\Omega))$,
$0\leq s\leq 1$, $1\leq p\leq \infty$. Error analysis for
such $g$ is unavailable in the literature. First, we give an interpolation error estimate.
This result seems standard, but we are unable to find a proof, and thus
include a proof in Appendix \ref{app:interp}.

\begin{lemma}\label{lem:L-interp}
For $v\in W^{s,p}(0,T;L^2(\Omega))$, $1<p<\infty$ and $s\in(1/p,1]$, let $\bar v^n = \tau^{-1}\int_{t_{n-1}}^{t_n} v(t)\,\d t$,
there holds
\begin{equation*} 
 \| (v(t_n) - \bar v^n)_{n=1}^N  \|_{\ell^p(L^2\II)}   \le c \tau^{s}    \| v \|_{W^{s,p}(0,T;L^2(\Omega))} .
\end{equation*}\vspace{-10pt}
\end{lemma}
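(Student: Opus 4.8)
The plan is to reduce the global $\ell^p(L^2(\Omega))$ estimate to a single \emph{scale-invariant} bound on the reference interval $(0,1)$ and then sum the local pieces. Since $s>1/p$, the embedding $W^{s,p}(0,T;L^2(\Omega))\hookrightarrow C([0,T];L^2(\Omega))$ makes the traces $v(t_n)$ well defined, so $v(t_n)-\bar v^n$ makes sense. The naive route — bounding $\tau\|v(t_n)-\bar v^n\|_{L^2(\Omega)}^p\le\int_{t_{n-1}}^{t_n}\|v(t_n)-v(t)\|_{L^2(\Omega)}^p\,\d t$ by Jensen and then controlling the integrand by the Slobodecki\v{\i} seminorm \eqref{eqn:SS-seminorm} — is awkward because the \emph{fixed} endpoint $t_n$ does not fit the double-integral structure of \eqref{eqn:SS-seminorm}. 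Instead I would treat $v\mapsto v(t_n)-\bar v^n$ as one linear functional and exploit that it annihilates time-independent functions.

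Concretely, map $(t_{n-1},t_n)$ affinely onto $(0,1)$ by $t=t_{n-1}+\tau\hat t$, set $\hat v(\hat t)=v(t_{n-1}+\tau\hat t)$, and define $F(\hat v):=\hat v(1)-\int_0^1\hat v(\hat t)\,\d\hat t\in L^2(\Omega)$, so that $F(\hat v)=v(t_n)-\bar v^n$. The crux is the reference estimate
\[
\|F(\hat v)\|_{L^2(\Omega)}\le c\,|\hat v|_{W^{s,p}(0,1;L^2(\Omega))},
\]
with $c$ absolute. This is a Bramble--Hilbert-type bound: $F$ is bounded on $W^{s,p}(0,1;L^2(\Omega))$ (the endpoint trace is bounded precisely because $s>1/p$) and vanishes on constants in time, hence $F(\hat v)=F\big(\hat v-\int_0^1\hat v\big)$; the fractional Poincar\'e inequality $\|\hat v-\int_0^1\hat v\|_{L^p(0,1;L^2(\Omega))}\le|\hat v|_{W^{s,p}(0,1;L^2(\Omega))}$ then upgrades control by the full norm to control by the seminorm alone. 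That Poincar\'e inequality follows in one line from $\hat v(\hat t)-\int_0^1\hat v=\int_0^1(\hat v(\hat t)-\hat v(\hat\xi))\,\d\hat\xi$, Jensen's inequality, and $|\hat t-\hat\xi|^{1+sp}\le1$ on $(0,1)^2$. (For the endpoint case $s=1$ the same functional is bounded on $W^{1,p}(0,1;L^2(\Omega))$ and the argument is identical, with the Slobodecki\v{\i} seminorm replaced by $\|\partial_{\hat t}\hat v\|_{L^p(0,1;L^2(\Omega))}$.)

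The remaining steps are routine scaling and summation. Undoing the substitution in \eqref{eqn:SS-seminorm} gives $|\hat v|_{W^{s,p}(0,1;L^2(\Omega))}^p=\tau^{sp-1}\,|v|_{W^{s,p}(t_{n-1},t_n;L^2(\Omega))}^p$, where the local seminorm is the double integral in \eqref{eqn:SS-seminorm} restricted to $(t_{n-1},t_n)^2$; combined with the reference estimate this yields $\tau\|v(t_n)-\bar v^n\|_{L^2(\Omega)}^p\le c\,\tau^{sp}\,|v|_{W^{s,p}(t_{n-1},t_n;L^2(\Omega))}^p$. Summing over $n$ and noting that the squares $(t_{n-1},t_n)^2$ are pairwise disjoint subsets of $(0,T)^2$, so that $\sum_{n=1}^N|v|_{W^{s,p}(t_{n-1},t_n;L^2(\Omega))}^p\le|v|_{W^{s,p}(0,T;L^2(\Omega))}^p$, gives
\[
\|(v(t_n)-\bar v^n)_{n=1}^N\|_{\ell^p(L^2(\Omega))}\le c\,\tau^{s}\,|v|_{W^{s,p}(0,T;L^2(\Omega))}\le c\,\tau^{s}\,\|v\|_{W^{s,p}(0,T;L^2(\Omega))},
\]
as claimed.

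I expect the main obstacle to be the reference estimate, i.e.\ showing the endpoint-value functional $F$ is controlled by the \emph{seminorm} rather than the full norm. This is exactly where $s>1/p$ is used — both for the trace $\hat v(1)$ to exist and for the embedding constant to be finite — and obtaining a scale-invariant constant there is what makes the subsequent $\tau$-scaling clean and the summation lossless.
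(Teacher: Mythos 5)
Your proposal is correct and follows essentially the same route as the paper's Appendix~A proof: rescale each subinterval to $(0,1)$, control the endpoint-minus-average functional by the Slobodecki\v{\i} seminorm via a fractional Poincar\'e inequality, scale back to pick up the factor $\tau^{sp}$, and sum using disjointness of the squares $(t_{n-1},t_n)^2$ inside $(0,T)^2$. The only real difference is that the paper cites the fractional Poincar\'e inequality from the literature, whereas you prove it inline with the Jensen argument, making the proof self-contained.
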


Our first result is an error estimate for $g\in W_L^{s,p} (0,T;L^2\II)$ (i.e., compatible
source). Since $g$ may not be smooth enough in time for pointwise evaluation,
we define the averages $\bar g_h^n=\tau^{-1}\int_{t_{n-1}}^{t_n}P_h g(s)\d s$, and
consider a variant of the scheme \eqref{eqn:fully-variant} for problem \eqref{eqn:fde-forward}:
find $\overline U_h^n \in X_h$ such that
\begin{equation}\label{eqn:fully}
\bPtau \overline U_h^n -\Delta_h \overline U_h^n = \bar g_h^n ,\quad n = 1,\ldots,N, \quad \mbox{with }
\overline U_h^0=0.
\end{equation}

\begin{theorem}\label{thm:err-time}
For $g\in W^{s,p}_L(0,T; L^2\II)$, $1  < p<\infty$ and $s\in[0,1]$, let $u_h$ and $\overline U_h^n$ be the solutions of problems \eqref{eqn:fdesemidis}
and \eqref{eqn:fully}, respectively, and $\bar u_h^n:=\tau^{-1}\int_{t_{n-1}}^{t_n}u_h(s)\d s$. Then there holds
\begin{align*}
 \| (\overline U_h^n - \bar u_h^n)_{n=1}^N  \|_{\ell^p(L^2\II)}   \le c\tau^{ s} \| g \|_{W^{s,p}_L(0,T;L^2\II)}.
\end{align*} \vspace{-10pt}
\end{theorem}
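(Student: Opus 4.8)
The plan is to prove the two endpoint cases $s=0$ and $s=1$ separately and then fill in $s\in(0,1)$ by interpolation, exploiting that the paper's spaces $W^{s,p}_L$ are defined as interpolation spaces between $L^p=W^{0,p}_L$ and $W^{1,p}_L$. Concretely, I would regard the solution map $\mathcal E\colon g\mapsto(\overline U_h^n-\bar u_h^n)_{n=1}^N$ as a linear operator and establish the two bounds
\begin{equation*}
\|\mathcal E g\|_{\ell^p(L^2\II)}\le c\|g\|_{L^p(0,T;L^2\II)}
\quad\text{and}\quad
\|\mathcal E g\|_{\ell^p(L^2\II)}\le c\tau\|g\|_{W^{1,p}_L(0,T;L^2\II)},
\end{equation*}
after which exact interpolation of $\mathcal E$ (with the target space $\ell^p(L^2\II)$ held fixed) yields $\|\mathcal E g\|_{\ell^p(L^2\II)}\le c\tau^{s}\|g\|_{W^{s,p}_L}$, which is the claim.

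For the stability endpoint $s=0$ I would bound the two terms of $\overline U_h^n-\bar u_h^n$ separately. Maximal $\ell^p$-regularity (Lemma \ref{lem:max-reg}) applied to \eqref{eqn:fully} gives $\|(\Delta_h\overline U_h^n)\|_{\ell^p(L^2\II)}\le c\|(\bar g_h^n)\|_{\ell^p(L^2\II)}$; since $-\Delta_h\ge\lambda_1 I$ uniformly in $h$ (discrete Poincar\'e inequality), this controls $\|(\overline U_h^n)\|_{\ell^p(L^2\II)}$, while the averaging operator is bounded from $L^p$ to $\ell^p$ by Jensen's inequality, so $\|(\bar g_h^n)\|_{\ell^p}\le\|g\|_{L^p}$. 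The average $\bar u_h^n$ is controlled the same way, using the semidiscrete stability $\|u_h\|_{L^p(0,T;L^2\II)}\le c\|g\|_{L^p}$ from \eqref{eqn:stability-semi}.

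The case $s=1$ is the heart of the argument. Here $g\in W^{1,p}_L$ forces $g(0)=0$, and I would split
\begin{equation*}
\overline U_h^n-\bar u_h^n=(\overline U_h^n-U_h^n)+(U_h^n-u_h(t_n))+(u_h(t_n)-\bar u_h^n),
\end{equation*}
where $U_h^n$ is the pointwise-source scheme \eqref{eqn:fully-variant}. Writing $\bar g^n:=\tau^{-1}\int_{t_{n-1}}^{t_n}g(s)\,\d s$, the first term solves \eqref{eqn:fully-variant} with source $\bar g_h^n-g_h^n=P_h(\bar g^n-g(t_n))$, so Lemma \ref{lem:max-reg} together with $\Delta_h$-invertibility and the interpolation estimate Lemma \ref{lem:L-interp} (with $s=1$) bound it by $c\tau\|g\|_{W^{1,p}}$; the third term is bounded directly by Lemma \ref{lem:L-interp} applied to $u_h$, using $\|u_h\|_{W^{1,p}(0,T;L^2\II)}\le c\|g\|_{W^{1,p}_L}$ from the extended semidiscrete regularity of Remark \ref{rem:ext-reg}. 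The middle term is handled by Lemma \ref{lem:fully-variant}: since $g(0)=0$ its first contribution drops, and taking the $\ell^p$ norm of the convolution $\int_0^{t_n}(t_{n+1}-s)^{\al-1}\|g'(s)\|_{L^2\II}\,\d s$ is the delicate point.

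For that convolution I would bound $(t_{n+1}-s)^{\al-1}\le((n+1-j)\tau)^{\al-1}$ on each subinterval $[t_{j-1},t_j]$, turning the integral into a discrete convolution of the kernel $(k^{\al-1})_{k\ge1}$ against the averages of $\|g'\|_{L^2\II}$, to which discrete Young's inequality applies. Although $\sum_{k=1}^N k^{\al-1}\sim\alpha^{-1}(T/\tau)^{\al}$ grows as $\tau\to0$, this factor is exactly compensated by the prefactor $\tau^{\al}$ from the kernel scaling, leaving a constant independent of $h$ and $\tau$ and the desired factor $\tau$. I expect this convolution estimate---ensuring an $\ell^p$ bound uniform in $h$ and $\tau$ despite the singular kernel and the mismatch between pointwise samples and cell averages---to be the main obstacle; the remaining care is in verifying the interpolation identity $W^{s,p}_L=[L^p,W^{1,p}_L]_{s}$ under the paper's convention so that the operator interpolation step is legitimate, with possible attention at the borderline exponent $s=1/p$.
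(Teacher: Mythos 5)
Your proposal is correct and follows the paper's proof in overall structure: the same reduction to the endpoint cases $s=0$ and $s=1$ followed by interpolation in $s$, the same stability argument at $s=0$ (maximal $\ell^p$-regularity for $\overline U_h^n$, H\"older/Jensen plus semidiscrete stability for $\bar u_h^n$), and the identical three-term splitting $(\overline U_h^n-U_h^n)+(U_h^n-u_h(t_n))+(u_h(t_n)-\bar u_h^n)$ at $s=1$, with Lemmas \ref{lem:max-reg} and \ref{lem:L-interp} and Remark \ref{rem:ext-reg} handling the outer terms exactly as in the paper. The one genuine divergence is the treatment of the middle term, which you correctly single out as the crux. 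The paper does not discretize the convolution: it proves the $\ell^\infty$ bound \eqref{eqn:err-U-u-inf} directly, obtains an $\ell^1$ bound \eqref{eqn:err-U-u-1} by introducing $\psi(s)=\tau\sum_{n=1}^N(t_n+\tau-s)^{\alpha-1}\chi_{[0,t_n]}$ with $\sup_{s}\psi(s)\le c_T$, and then invokes the Riesz--Thorin theorem to cover all $1<p<\infty$. You instead majorize the kernel piecewise by $((n+1-j)\tau)^{\alpha-1}$ and apply discrete Young's inequality; this is sound --- the $\ell^1$ kernel mass $\tau^\alpha\sum_{k=1}^{N}k^{\alpha-1}\le cT^\alpha$ provides exactly the compensation you describe --- and it is arguably more elementary, giving all $p\in[1,\infty]$ in one stroke without any operator-interpolation machinery, at the modest cost of passing from the integral to cell averages of $\|g'\|_{L^2(\Omega)}$. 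Both routes share the final interpolation step in $s$, and your caution about identifying $W^{s,p}_L$ with $[L^p,W^{1,p}_L]_s$ under the paper's conventions (especially at the borderline $s=1/p$) is warranted: the paper simply asserts that ``the case $0<s<1$ follows by interpolation'' without addressing this Lions--Magenes-type subtlety, so your version is, if anything, more careful on that point.
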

\begin{proof}
By H\"{o}lder's inequality and \eqref{eqn:stability-semi} (with $s=0$), we have
\begin{equation*}
  \begin{aligned}
  \tau\sum_{n=1}^N\|\bar u_h^n\|_{L^2\II}^p & = \tau \sum_{n=1}^N \|\tau^{-1}\int_{t_{n-1}}^{t_n}u_h(s)\d s\|_{L^2\II}^p
  \leq \tau^{1-p}\sum_{n=1}^N\Big(\int_{t_{n-1}}^{t_n}\|u_h(s)\|_{L^2(\Omega)}\d s\Big)^p\\
  &\leq \int_0^T\|u_h(s)\|_{L^2\II}^p\d s\leq c\|g\|_{L^p(0,T;L^2\II)}^p .
  \end{aligned}
\end{equation*}
Similarly, by applying Lemma \ref{lem:max-reg} to \eqref{eqn:fully} and the $L^2(\Omega)$ stability of $P_h$, we have
\begin{equation*}
  \|(\overline U_h^n)_{n=1}^N\|_{\ell^p(L^2(\Omega))} \le c \|  (\bar g_h^n)_{n=1}^N  \|_{\ell^p(L^2(\Omega))}
  \le c \| g \|_{L^p(0,T;L^2\II)} .
\end{equation*}
This and the triangle inequality show the assertion for $s=0$.

Next we consider $g\in W_L^{1,p}(0,T;L^2\II)$, and resort to \eqref{eqn:fully-variant}. Since $g(0)=0$, by Lemma
\ref{lem:fully-variant},
\begin{equation*}
   \| U_h^n - u_h(t_n)   \|_{L^2\II} \le c\tau \int_0^{t_n} (t_n+\tau-s)^{\al-1} \| g'(s) \|_{L^2\II} \d s.
\end{equation*}
This directly implies
\begin{align}
   \| ( U_h^n -u_h(t_n))_{n=1}^N  \|_{\ell^\infty(L^2\II)}
    & \le  c \tau  \| g \|_{W_L^{1,\infty}(0,T;L^2\II)}.\label{eqn:err-U-u-inf}
\end{align}
Further, let $\psi(s)=\tau\sum_{n=1}^N(t_n+\tau-s)^{\alpha-1}\chi_{[0,t_n]}$, where $\chi_S$ denotes the
characteristic function of a set $S$. Then clearly, we have
\begin{equation*}
  \sup_{s\in [0,T]}\psi(s)=\psi(T)=\tau\sum_{n=1}^N(n\tau)^{\alpha-1} \leq \int_0^Ts^{\alpha-1}\d s = \alpha^{-1}T^{\alpha}\leq c_T .
\end{equation*}
Therefore,
\begin{align}
 \| (U_h^n -u_h(t_n))_{n=1}^N  \|_{\ell^1(L^2\II)}
&\le c\tau^2 \sum_{n=1}^N \int_0^{t_n} (t_n+\tau-s)^{\al-1} \| g'(s) \|_{L^2\II} \,\d s\nonumber\\
&= c\tau\int_0^T\psi(s)\|g'(s)\|_{L^2(\Omega)}\d s \le c_T\tau   \| g \|_{W^{1,1}(0,T;L^2\II)}.\label{eqn:err-U-u-1}
\end{align}
Then \eqref{eqn:err-U-u-inf}, \eqref{eqn:err-U-u-1} and Riesz-Thorin
interpolation theorem \cite[Theorem 1.1.1]{bergh:2012}, yield for any $1<p<\infty$
\begin{equation*}
 \| (U_h^n -u_h(t_n))_{n=1}^N  \|_{\ell^p(L^2\II)}
  \le  c \tau  \| g \|_{W_L^{1,p}(0,T;L^2\II)}.
\end{equation*}
Since $U_h^n-\overline U_h^n$ satisfies the discrete scheme, cf. \eqref{eqn:fully-variant}
and \eqref{eqn:fully}, Lemmas \ref{lem:max-reg} and \ref{lem:L-interp} imply
\begin{equation*}
 \|(U_h^n - \overline U_h^n)_{n=1}^N\|_{\ell^p(L^2(\Omega))} \le  c  \| (\bar g_h^n - P_hg(t_n))_{n=1}^N\|_{\ell^p(L^2(\Omega))} \leq c\tau\|g\|_{W^{1,p}(0,T;L^2(\Omega))}.
\end{equation*}
Further, by Lemma \ref{lem:L-interp} and Remark \ref{rem:ext-reg}, we have
\begin{equation*}
\|  (u_h(t_n) - \bar u_h^n)_{n=1}^N\|_{\ell^p(L^2\II)} \leq c\tau \|u_h\|_{W^{1,p}(0,T;L^2\II)} \leq c\tau\|g\|_{W_L^{1,p}(0,T;L^2\II) }.
\end{equation*}
The last three estimates show the assertion for $s=1$.
The case $0<s<1$ follows by interpolation.
\end{proof}

In Theorem \ref{thm:err-time}, we compare the numerical solution $\overline {U}_h^n$ to \eqref{eqn:fully} with the time-averaged
solution $\bar u_h^n$, instead of $u_h(t_n)$. This is due to possible insufficient temporal regularity of $u_h$:
it is unclear how to define $u_h(t_n)$ for $t_n\in(0,T]$ for $g\in W_L^{s,p}(0,T; L^2\II)$ with $s+\alpha<1/p$. For $s\in(1/p,1]$,
$W_L^{s,p}(0,T; L^2\II) \hookrightarrow C([0,T] ;L^2\II)$ and so Theorem \ref{thm:err-time} requires the condition
$g(0)=0$. Such a compatibility condition at $t=0$ is not necessarily satisfied by \eqref{eqn:ob}--\eqref{eqn:fde}. Hence, we state an error estimate
below for a smooth but incompatible source $g\in W^{s,p}(0,T; L^2\II)$.

\begin{theorem}\label{thm:err-time2}
For $g\in W^{s,p}(0,T; L^2\II)$ with $p\in(1,\infty)$ and $s\in(1/p,1)$, let $u_h$ and $U_h^n$ be the
solutions of \eqref{eqn:fdesemidis} and \eqref{eqn:fully-variant}, respectively. Then there holds
\begin{align}
\| (U_h^n -  u_h(t_n))_{n=1}^N  \|_{\ell^p(L^2\II)}    \le c \tau^{\min(1/p+\alpha,s)} \| g \|_{W^{s,p}(0,T; L^2\II)} .\label{l2L2Uhn-0}
\end{align}
Moreover, if $p>{1}/{\alpha}$ is so large that $\alpha\in(0,{1}/{p'})$, then
\begin{align}
\| (U_h^n -  u_h(t_n))_{n=1}^N  \|_{\ell^\infty(L^2\II)}    \le c \tau^{\alpha} \| g \|_{W^{1/p+\alpha,p}(0,T; L^2\II)}. \label{l2L2Uhn-00}
\end{align}
\end{theorem}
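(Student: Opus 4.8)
The plan is to remove the incompatibility at $t=0$ by splitting the source into a temporally constant part carrying the value $g(0)$ and a remainder that vanishes at $t=0$. Since $s>1/p$ gives $W^{s,p}(0,T;L^2(\Omega))\hookrightarrow C([0,T];L^2(\Omega))$, the trace $w_0:=g(0)\in L^2(\Omega)$ is well defined with $\|w_0\|_{L^2(\Omega)}\le c\|g\|_{W^{s,p}(0,T;L^2(\Omega))}$, and $g_1:=g-w_0$ satisfies $g_1(0)=0$, hence $g_1\in W_L^{s,p}(0,T;L^2(\Omega))$ with $\|g_1\|_{W_L^{s,p}(0,T;L^2(\Omega))}\le c\|g\|_{W^{s,p}(0,T;L^2(\Omega))}$ (for $1/p<s<1$ the vanishing trace characterizes $W_L^{s,p}$, the left zero-extension staying in $W^{s,p}$ by the fractional Hardy inequality). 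By linearity of \eqref{eqn:fdesemidis} and \eqref{eqn:fully-variant}, the error $U_h^n-u_h(t_n)$ splits into the contribution of $g_1$ and that of $w_0$, which I would estimate separately, writing $U_h^n[\cdot]$ and $u_h[\cdot]$ for the discrete and semidiscrete solutions with a prescribed source.

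For the compatible part $g_1$ the target rate is $\tau^s$, which I would extract from Theorem \ref{thm:err-time}; since the latter compares the averaged scheme \eqref{eqn:fully} with the averaged semidiscrete solution, I would bridge pointwise and averaged quantities via $U_h^n[g_1]-u_h[g_1](t_n)=(U_h^n[g_1]-\overline U_h^n[g_1])+(\overline U_h^n[g_1]-\bar u_h^n[g_1])+(\bar u_h^n[g_1]-u_h[g_1](t_n))$. The middle term is $O(\tau^s)$ by Theorem \ref{thm:err-time}. The first term solves the discrete scheme with source $P_hg_1(t_n)-\bar g_{1,h}^n$, so discrete maximal $\ell^p$-regularity (Lemma \ref{lem:max-reg}, with the uniform bound on $\|\Delta_h^{-1}\|$) together with Lemma \ref{lem:L-interp} applied to $P_hg_1\in W^{s,p}(0,T;L^2(\Omega))$ gives $O(\tau^s)$. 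The last term is controlled by Lemma \ref{lem:L-interp} applied to $u_h[g_1]$, which by \eqref{eqn:stability-semi} and Remark \ref{rem:ext-reg} lies in $W^{\alpha+s,p}(0,T;L^2(\Omega))$, yielding $O(\tau^{\min(\alpha+s,1)})=O(\tau^s)$.

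For the constant part $w_0$ the source is smooth with $g'\equiv0$ and $g(0)=w_0$, so Lemma \ref{lem:fully-variant} gives the clean bound $\|U_h^n[w_0]-u_h[w_0](t_n)\|_{L^2(\Omega)}\le c\tau t_n^{\alpha-1}\|w_0\|_{L^2(\Omega)}$. Raising to the $p$-th power and summing reduces the $\ell^p$ estimate to controlling $\sum_{n=1}^N\tau t_n^{p(\alpha-1)}=\tau^{1+p(\alpha-1)}\sum_{n=1}^N n^{p(\alpha-1)}$, and this sum is where the limited smoothing of the fractional model enters and is the main obstacle. When $\alpha<1/p'$ the series $\sum n^{p(\alpha-1)}$ converges, so one retains the factor $\tau^{1+p(\alpha-1)}$ and obtains the rate $\tau^{1/p+\alpha}$; when $\alpha>1/p'$ the partial sum grows like $N^{1+p(\alpha-1)}=\tau^{-(1+p(\alpha-1))}$, the powers cancel, and the rate is $\tau^1$ (the borderline $\alpha=1/p'$ produces only a harmless $\log(1/\tau)$). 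Hence the constant part is $O(\tau^{\min(1/p+\alpha,1)})$, and since $s<1$, adding the two contributions and bounding both by $\|g\|_{W^{s,p}(0,T;L^2(\Omega))}$ yields the claimed rate $\tau^{\min(1/p+\alpha,s)}$ in \eqref{l2L2Uhn-0}.

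Finally, for the $\ell^\infty$ bound \eqref{l2L2Uhn-00} I would specialize \eqref{l2L2Uhn-0} to $s=1/p+\alpha$, which is admissible precisely because $\alpha\in(0,1/p')$ forces $1/p+\alpha\in(1/p,1)$; this gives the $\ell^p$ rate $\tau^{1/p+\alpha}$. The elementary inverse-type embedding $\|(v^n)\|_{\ell^\infty(L^2(\Omega))}\le\tau^{-1/p}\|(v^n)\|_{\ell^p(L^2(\Omega))}$, immediate from $\tau\|v^m\|_{L^2(\Omega)}^p\le\sum_n\tau\|v^n\|_{L^2(\Omega)}^p$, then upgrades this to the $\ell^\infty$ rate $\tau^{(1/p+\alpha)-1/p}=\tau^\alpha$, which is the assertion.
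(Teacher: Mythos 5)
Your proposal is correct and follows essentially the same route as the paper's own proof: the same splitting $g = g(0) + (g - g(0))$, the same treatment of the constant part via Lemma \ref{lem:fully-variant} and the case analysis of $\alpha$ versus $1/p'$ (with the borderline logarithm absorbed using $s<1$), the same three-term bridge through the averaged scheme \eqref{eqn:fully} for the compatible part using Theorem \ref{thm:err-time}, Lemma \ref{lem:max-reg} and Lemma \ref{lem:L-interp}, and the same inverse inequality applied to \eqref{l2L2Uhn-0} with $s=1/p+\alpha$ for the $\ell^\infty$ bound. The only (harmless) deviation is that you estimate the term $\bar u_h^n[g_1]-u_h[g_1](t_n)$ using the full $W^{\alpha+s,p}$ regularity of the semidiscrete solution, where the paper uses only its $W^{s,p}$ regularity, which already suffices for the rate $\tau^s$.
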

\begin{proof}
For $g(x,t)\equiv g(x)$, which belongs to $W^{1,p}(0,T;L^2(\Omega))$, by Lemma
\ref{lem:fully-variant} we have
\begin{align} \label{l2L2Uhn-0-}
  \|(U_h^n - u_h(t_n))_{n=1}^N \|_{\ell^p(L^2\II)}^p
  & =  \tau \sum_{n=1}^N  \| U_h^n - u_h(t_n)  \|_{L^2\II}^p
  \le  c \tau^{p+1}  \|  g \|_{L^2\II}^p \sum_{n=1}^N  t_n^{p(\alpha-1)} \nonumber  \\
  & \le c \tau^{p\alpha+1} \|  g \|_{L^2\II}^p  + c \tau^{p}  \|  g \|_{L^2\II}^p \int_\tau^T  t^{p(\alpha-1)} \,\d t.\nonumber
\end{align}
Now for $s<1$, there holds
\begin{equation*}
  \int_\tau^Tt^{p(\alpha-1)} \,\d t
  \le  \left\{\begin{aligned}
        &c \tau^{p\alpha+1} &&\mbox{if}\,\,\,\alpha \in (0,1/p') \\
        &c \tau^{p} &&\mbox{if}\,\,\,\alpha \in (1/p' ,1)\\
        &c \tau^p \Big(1+\ln (T/\tau)\Big) &&\mbox{if}\,\,\,\alpha = 1/p'
                   \end{aligned}
          \right.
     \le  \left\{\begin{aligned}
        &c \tau^{p(1/p+\alpha)}  &&\mbox{if}\,\,\,\alpha \in (0,1/p') \\
        &c \tau^{ps}  &&\mbox{if}\,\,\,\alpha \in (1/p' ,1)\\
        &c \tau^{ps} &&\mbox{if}\,\,\,\alpha = 1/p'
                   \end{aligned}
          \right.
\end{equation*}
which together with the preceding estimate implies
\begin{equation*}
  \|(U_h^n - u_h(t_n))_{n=1}^N \|_{\ell^p(L^2(\Omega))} \le c\tau^{\min(1/p+\alpha,s)} \| g \|_{W^{s,p}(0,T; L^2(\Omega))} .
\end{equation*}
For $g\in W^{s,p}(0,T; L^2\II)$, by Sobolev embedding, $g(0)$ exists,
and in the splitting $g(t)=g(0) + (g(t)-g(0))$, there holds
$
  \|g-g(0)\|_{W_L^{s,p}(0,T;L^2(\Omega))}\leq c\|g\|_{W^{s,p}(0,T;L^2(\Omega))}.
$
Let $v_h$ be the semidiscrete solution for the source $g(t)-g(0)$, and $\bar v_h^n=\int_{t_{n-1}}^{t_n}v_h(t)\d t$.
Since $g(t)-g(0)\in W_L^{s,p}(0,T; L^2\II)$, by Theorem \ref{thm:err-time}, the corresponding fully discrete solution
$\overline V_h^n$ by \eqref{eqn:fully} satisfies
\begin{equation}\label{l2L2Uhn-2}
  \|(\overline V_h^n- \bar v_h^n)_{n=1}^N\|_{\ell^p(L^2\II)}  \leq c\tau^{s}\|g-g(0)\|_{W_L^{s,p}(0,T; L^2\II)}.
\end{equation}
Further, by Lemma \ref{lem:L-interp} and \eqref{eqn:stability-semi}, we have
\begin{align*}
  \|(v_h(t_n)-\bar v_h^n)_{n=1}^N\|_{\ell^p(L^2\II)}
  \leq c \tau^{s} \|v_h\|_{W^{s,p}(0,T;L^2(\Omega))}
   &\leq c\tau^{s} \|g-g(0)\|_{W_L^{s,p}(0,T; L^2\II)}.
\end{align*}
Similarly, for the fully discrete solution $V_h^n$ for the source $g(t)-g(0)$ by \eqref{eqn:fully-variant},
from Lemmas \ref{lem:max-reg} and \ref{lem:L-interp}, we deduce
\begin{equation}\label{eqn:U-barU}
  \begin{aligned}
  \|(V_h^n-\overline V_h^n)_{n=1}^N\|_{\ell^p(L^2(\Omega))}&\leq c \|(P_hg(t_n)-\bar g_h^n)_{n=1}^N\|_{\ell^p(L^2(\Omega))} \\
   &\leq c\tau^{s}  \|g\|_{W^{s,p}(0,T;L^2(\Omega))}.
  \end{aligned}
\end{equation}
These estimates together with the triangle inequality give \eqref{l2L2Uhn-0}.


Finally, \eqref{l2L2Uhn-00} follows by the inverse inequality in time and \eqref{l2L2Uhn-0} with $s=1/p+\alpha$, i.e.,
\begin{align*}
\| (U_h^n -  u_h(t_n))_{n=1}^N  \|_{\ell^\infty(L^2\II)}
 &\le c \tau^{-1/p} \| (U_h^n -  u_h(t_n))_{n=1}^N  \|_{\ell^p(L^2\II)} \\
 &\le c\tau^\alpha \|g\|_{W^{1/p+\alpha,p}(0,T; L^2\II)} .
\end{align*}
This completes the proof of the theorem.
\end{proof}

\begin{remark}\label{rmk:err}
The estimate \eqref{l2L2Uhn-0} is not sharp since,
according to the proof, the restriction $s<1$ is only needed for $\alpha = {1}/{p'}$. Nonetheless,
it is sufficient for the error analysis in Section \ref{sec:error}.
\end{remark}
\section{The optimal control problem and its numerical approximation}\label{sec:error}
In this section, we develop a numerical scheme for problem \eqref{eqn:ob}--\eqref{eqn:fde},
and derive error bounds for the spatial and temporal discretizations.

\subsection{The continuous problem}
The first-order optimality condition of  \eqref{eqn:ob}-\eqref{eqn:fde} was given in \cite[Theorem 3.4]{ZhouGong:2016}.
\begin{theorem}\label{thm:nec-opt}
Problem \eqref{eqn:ob}-\eqref{eqn:fde} admits a unique solution $q\in \Uad$.
There exist a state $u\in L^2(0,T; D(\Delta))\cap H_L^\al(0,T; L^2\II)$ and an adjoint
$z\in L^2(0,T; D(\Delta))\cap H_R^\al(0,T; L^2\II)$ such that $(u,z,q)$ satisfies
\begin{align}
   \Dal u-\Delta u&= f+q,\quad  0<t\leq T,\quad \mbox{with}\quad u(0)=0 ,\label{eqn:fde1}\\
   _t\partial_T^\alpha z-\Delta z&= u-u_d,\quad   0\le t<T,\quad \mbox{with}\quad z(T)=0,\label{eqn:fde2}\\
   (\gamma q + z,&v-q)_{L^2(0,T;L^2(\Omega))}\ge 0,\quad \quad \forall v\in \Uad.\label{eqn:fde3}
\end{align}
where $(\cdot,\cdot)_{L^2(0,T;L^2(\Omega))}$ denotes the $L^2(0,T;L^2(\Omega))$ inner product.
\end{theorem}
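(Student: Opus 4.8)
The plan is to recast \eqref{eqn:ob}--\eqref{eqn:fde} as the minimization of a reduced functional over the convex set $\Uad$, and to read off the optimality system from convexity together with the fractional integration-by-parts identity \eqref{eqn:adjoint}. First I would show that the control-to-state map is well defined and affine: for each $q\in L^2(0,T;L^2\II)$, Lemma \ref{lem:max-lp} and Theorem \ref{thm:reg} (with $p=2$, $s=0$) give a unique $u=u(q)\in L^2(0,T;D(\Delta))\cap H_L^\al(0,T;L^2\II)$ solving \eqref{eqn:fde}, depending continuously and affinely on $q$. Writing $u(q)=Sq+u_f$ with $S\colon L^2(0,T;L^2\II)\to L^2(0,T;L^2\II)$ the bounded linear solution operator for source $q$ (zero forcing) and $u_f$ the solution for $f$ alone, the reduced functional $j(q):=\tfrac12\|Sq+u_f-u_d\|_{L^2(0,T;L^2\II)}^2+\tfrac\gamma2\|q\|_{L^2(0,T;L^2\II)}^2$ is quadratic, continuous, and strictly convex since $\gamma>0$.

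For existence and uniqueness, I note that $\Uad$ is nonempty (as $a<b$), bounded, closed and convex in $L^2(0,T;L^2\II)$, hence weakly sequentially compact. Being convex and continuous, $j$ is weakly lower semicontinuous, so the direct method of the calculus of variations yields a minimizer $q\in\Uad$, and strict convexity makes it unique.

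Next I would derive the variational inequality. Since $j$ is convex and, being quadratic, Fr\'echet differentiable on the convex set $\Uad$, the minimizer is characterized by $j'(q)(v-q)\ge0$ for all $v\in\Uad$, and a direct computation gives $j'(q)=\gamma q+S^*(u(q)-u_d)$, where $S^*$ is the $L^2(0,T;L^2\II)$-adjoint of $S$. The crux is to identify $S^*$ with the backward adjoint solution operator. Given $\eta\in L^2(0,T;L^2\II)$, let $z$ solve \eqref{eqn:fde2}, so that $\eta=\dDal z-\Delta z$; testing the forward equation for $u=Sq$ against $z$, using the self-adjointness of $\Delta$ under the homogeneous Dirichlet condition and \eqref{eqn:adjoint} with $p=p'=2$, I obtain
\[
\int_0^T (Sq,\eta)\,\d t=\int_0^T\big(u,\,\dDal z-\Delta z\big)\,\d t=\int_0^T\big(\Dal u-\Delta u,\,z\big)\,\d t=\int_0^T (q,z)\,\d t,
\]
so $S^*\eta=z$. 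Taking $\eta=u(q)-u_d$ then identifies \eqref{eqn:fde2} as the adjoint equation and turns $j'(q)(v-q)\ge0$ into \eqref{eqn:fde3}; by convexity the condition is in fact both necessary and sufficient.

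The main obstacle is justifying the integration by parts \eqref{eqn:adjoint}: it requires $u\in W^{\al,2}_L(0,T;L^2\II)$ and $z\in W^{\al,2}_R(0,T;L^2\II)$, so that the fractional derivatives lie in $L^2$ and no boundary terms are generated by the traces $u(0)=0$ and $z(T)=0$. Both memberships are exactly the $H_L^\al$- and $H_R^\al$-regularity supplied by Theorem \ref{thm:reg} for the forward and adjoint problems, so the pairing is legitimate; the remaining verifications (continuity and affineness of $q\mapsto u(q)$, and the formula for $j'$) are routine.
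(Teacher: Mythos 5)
Your proposal cannot be compared against an internal argument, because the paper does not prove Theorem \ref{thm:nec-opt} at all: it imports the result verbatim from \cite[Theorem 3.4]{ZhouGong:2016}. Your blind proof is therefore a genuinely self-contained alternative, and it is correct. It follows the classical linear--quadratic template: the control-to-state map $q\mapsto u(q)=Sq+u_f$ is affine and bounded by Lemma \ref{lem:max-lp} and Theorem \ref{thm:reg} (with $p=2$, $s=0$); existence and uniqueness of the minimizer follow from the direct method on the bounded, closed, convex set $\Uad$ together with strict convexity of the reduced functional ($\gamma>0$); the variational inequality \eqref{eqn:fde3} is the standard first-order characterization $j'(q)(v-q)\ge 0$; and the adjoint state enters through the identification $S^*\eta=z$, which you obtain from the fractional integration-by-parts identity \eqref{eqn:adjoint} with $p=p'=2$ together with Green's formula for the Dirichlet Laplacian. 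What this buys is that the theorem becomes provable entirely from ingredients already established in Section 2 of this paper, rather than being an external citation; the price is that all the regularity bookkeeping needed to legitimize \eqref{eqn:adjoint} must be done explicitly, which you correctly recognize as the crux.

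One imprecision is worth patching. You assert that the memberships $u\in H_L^\al(0,T;L^2\II)$ and $z\in H_R^\al(0,T;L^2\II)$ are ``exactly'' what Theorem \ref{thm:reg} supplies, but that theorem, as stated, only gives $u\in W^{\alpha,2}(0,T;L^2\II)$ without the subscript $L$. For $\alpha<1/2$ this is harmless, since the paper notes that $W_L^{s,p}(0,T;L^2\II)=W^{s,p}(0,T;L^2\II)$ whenever $s<1/p$; but for $\alpha\ge 1/2$ the space $H_L^\alpha$ is a proper subspace, and membership in it is both part of the theorem's assertion and a hypothesis of \eqref{eqn:adjoint}. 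The fix is one line: the solution constructed in the proof of Theorem \ref{thm:reg} is causal --- it is produced by the Fourier-multiplier formula applied to the zero extension of the source, hence vanishes for $t\le 0$ --- so its restriction to $(0,T)$ lies in $H_L^\alpha(0,T;L^2\II)$, and symmetrically $z\in H_R^\alpha(0,T;L^2\II)$ since the adjoint solution vanishes for $t\ge T$. With that observation made explicit, your appeal to \eqref{eqn:adjoint} is fully justified and the proof is complete.
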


Let $P_{\Uad}$ be the nonlinear pointwise projection operator defined by
\begin{equation}\label{Def-PUq}
    P_{\Uad} (q) = \max(a,\min(q,b)).
\end{equation}
It is bounded on $W^{s,p}(0,T; L^2(\Omega))$ for $0\leq s\leq 1$ and $1\leq p\leq \infty$
\begin{equation}\label{eqn:bdd-Puad}
  \|P_{\Uad}u\|_{W^{s,p}(0,T;L^2(\Omega))}\leq c\|u\|_{W^{s,p}(0,T; L^2(\Omega))}.
\end{equation}
This estimate holds trivially for $s=0$ and $s=1$ (see
\cite[Corollary 2.1.8]{Ziemer:1989}), and the case $0<s<1$ follows by interpolation.
Then \eqref{eqn:fde3} is equivalent to the complementarity condition 
\begin{equation}\label{equiv-cond-q-z}
    q= P_{\Uad} \left(-\gamma^{-1} z\right).
\end{equation}

Now we give higher regularity of the triple $(u,z,q)$.
\begin{lemma}\label{Lemma-Reg-quz}
For any $s\in(0,1/2)$, let $u_d\in H^{s}(0,T;L^2(\Omega))$ and $f\in H^{s}(0,T;L^2(\Omega))$.
Then the solution $(u,z,q)$ of problem \eqref{eqn:fde1}--\eqref{eqn:fde3} satisfies the following estimate
\begin{equation*}
  \|q\|_{H^{\min(1,\alpha+s)} (0,T;L^2(\Omega))} +\|u\|_{H^{\alpha+s}(0,T;L^2(\Omega))}+
  \|z\|_{H^{\alpha+s}(0,T;L^2(\Omega))}\leq c.
\end{equation*}\vspace{-.10mm}
\end{lemma}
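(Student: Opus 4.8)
The plan is to run a bootstrap (regularity-improvement) argument that exploits the fixed-point structure $q = P_{\Uad}(-\gamma^{-1}z)$ together with the smoothing of the forward and adjoint solution operators given by Theorem \ref{thm:reg}. The starting point is the base regularity already known from Theorem \ref{thm:nec-opt}, namely $q\in L^2(0,T;L^2\II)$, together with the data assumptions $u_d,f\in H^s(0,T;L^2\II)$. I would work in the Hilbert-space setting $p=2$ throughout, so that $1/p=1/2$ and the admissible smoothness in Theorem \ref{thm:reg} is $s\in[0,1/2)$, which matches exactly the hypothesis $s\in(0,1/2)$ here; this is no accident, since the boundedness \eqref{eqn:bdd-Puad} of $P_{\Uad}$ only preserves $W^{\sigma,p}$ regularity for $\sigma\le 1$, and the Nemytskii truncation destroys regularity above one derivative. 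The target is to climb from $q\in H^0$ up to $q\in H^{\min(1,\alpha+s)}$ in finitely many steps.

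The key steps, in order, are as follows. First, from $q\in L^2(0,T;L^2\II)=H^0(0,T;L^2\II)$ and $f\in H^s(0,T;L^2\II)\subset L^2(0,T;L^2\II)$, the source $f+q$ of \eqref{eqn:fde1} lies in $L^2(0,T;L^2\II)$, so Theorem \ref{thm:reg} with $s=0$ gives $u\in H^\alpha(0,T;L^2\II)$. Then the adjoint source $u-u_d$ in \eqref{eqn:fde2} lies in $H^{\min(\alpha,s)}(0,T;L^2\II)$, and applying the adjoint half of Theorem \ref{thm:reg} (with regularity exponent $\min(\alpha,s)$, legitimate as long as it stays below $1/2$) yields $z\in H^{\alpha+\min(\alpha,s)}(0,T;L^2\II)$. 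By \eqref{eqn:bdd-Puad} the complementarity relation \eqref{equiv-cond-q-z} then transfers this regularity to $q$, giving $q\in H^{\min(1,\alpha+\min(\alpha,s))}(0,T;L^2\II)$. I would then iterate: feeding the improved $q$ back into the forward equation raises the regularity of $u$, hence of the adjoint source, hence of $z$, hence of $q$ again, each round gaining (up to the cap $1/2$ in the $H^s$-smoothing input and the cap $1$ on $q$). One must track carefully that the state/adjoint smoothing exponent fed into Theorem \ref{thm:reg} never exceeds the $1/p=1/2$ ceiling; the bound $s<1/2$ guarantees exactly that the final target $\alpha+s$ is the relevant saturation value. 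After finitely many iterations the regularity of $q$ saturates at $\min(1,\alpha+s)$, at which point $u-u_d\in H^s(0,T;L^2\II)$ and the forward/adjoint estimates of Theorem \ref{thm:reg} close the loop and deliver the stated bounds on $u$ and $z$ simultaneously.

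The main obstacle, and the point demanding the most care, is the ceiling imposed by the projection $P_{\Uad}$: although the adjoint smoothing could in principle push $z$ into $H^{2\alpha}$ or beyond, the truncation operator only preserves Sobolev regularity up to one full derivative, and in fact can only legitimately transfer $H^\sigma$-regularity to $q$ for $\sigma\le 1$ because of \eqref{eqn:bdd-Puad}. This is why the final exponent for $q$ carries the cap $\min(1,\alpha+s)$ rather than $\alpha+s$ outright. A secondary subtlety is that the input regularity to the adjoint equation is governed by $u-u_d$, whose smoothness is the minimum of that of $u$ and of the fixed datum $u_d\in H^s$; thus the iteration is ultimately throttled by $s$, and one must verify that $u$ attains at least $H^s$ regularity (which it does already after the first forward solve, since $\alpha+0=\alpha$ may be below $s$ but the subsequent round lifts it). I would make the bootstrap rigorous by defining the monotone exponent sequence $s_0=0$, $s_{k+1}=\min\bigl(1,\alpha+\min(\alpha+s_k,s)\bigr)$ for the regularity of $q$ and checking that it increases and stabilizes at $\min(1,\alpha+s)$ after finitely many steps, with every intermediate smoothing exponent kept strictly below $1/2$ so that Theorem \ref{thm:reg} applies at each stage.
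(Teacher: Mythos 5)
Your proposal is correct, but it closes the loop by a genuinely different mechanism than the paper. Both arguments rest on the same three ingredients---the forward and adjoint regularity estimates of Theorem \ref{thm:reg} with $p=2$ (smoothing exponent kept below $1/2$), the stability \eqref{eqn:bdd-Puad} of $P_{\Uad}$ up to one derivative, and the relation \eqref{equiv-cond-q-z}---but the paper resolves the circular dependence $q\to u\to z\to q$ in a single pass: it chains the three estimates into $\|q\|_{H^{r}(0,T;L^2\II)}\le c+c\|q\|_{H^{s}(0,T;L^2\II)}$ with $r=\min(1,\alpha+s)$, then invokes the interpolation inequality $\|q\|_{H^s}\le c_{\epsilon'}\|q\|_{L^2}+\epsilon'\|q\|_{H^{r}}$ and absorbs the $\epsilon'$-term into the left-hand side, finally controlling $\|q\|_{L^2}$ by the pointwise bound $a\le q\le b$. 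You instead iterate from the base case $q\in L^2$: each round of forward solve, adjoint solve, and projection raises the exponent, and your recursion $s_{k+1}=\min\bigl(1,\alpha+\min(\alpha+s_k,s)\bigr)$ is indeed correct (it increases by at least $\alpha$ per step while below the threshold and stabilizes at $\min(1,\alpha+s)$ after finitely many rounds), with every exponent fed into Theorem \ref{thm:reg} bounded by $s<1/2$ as required. The trade-off is worth noting: the paper's absorption argument is shorter, but it tacitly presumes $\|q\|_{H^r}<\infty$ so that the $\epsilon'$-term can legitimately be subtracted---a qualitative point that, strictly speaking, is justified by exactly the kind of bootstrap you run; your iteration needs more rounds (on the order of $s/\alpha$) and more bookkeeping, but each step is an unconditional a priori bound, and it dispenses with the interpolation inequality of \cite[Lemma 24.1]{Tartar:2006} altogether. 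Both routes yield the same quantitative conclusion, including the cap at one derivative coming from the projection, which you identify correctly as the reason for the exponent $\min(1,\alpha+s)$.
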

\begin{proof}
Let $r=\min(1,\alpha+s)$. By \eqref{equiv-cond-q-z} and \eqref{eqn:bdd-Puad}, we have
\begin{equation*}
  \|q\|_{H^{r} (0,T;L^2(\Omega))}\le c\|z\|_{H^{r} (0,T;L^2(\Omega))}  \le c\|z \|_{H^{\alpha+s} (0,T;L^2(\Omega))}.
\end{equation*}
Applying Theorem \ref{thm:reg} to \eqref{eqn:fde2} yields
\begin{equation*}
  \begin{aligned}
 \|z\|_{H^{\alpha+s} (0,T;L^2(\Omega))} & \le c(\|u\|_{H^{s}(0,T;L^2(\Omega))}+\|u_d\|_{H^{s}(0,T;L^2(\Omega))})
  \le c \|u\|_{H^{s} (0,T;L^2(\Omega))}+c.
  \end{aligned}
\end{equation*}
Similarly, applying Theorem \ref{thm:reg} to \eqref{eqn:fde1} gives
\begin{equation}\label{Reg-uh}
  \begin{aligned}
\|u\|_{H^{\alpha+s} (0,T;L^2(\Omega))}  & \le c(\|f\|_{H^{s} (0,T;L^2(\Omega))}+\|q\|_{H^s(0,T;L^2(\Omega))})
   \le c+c\|q\|_{H^s(0,T;L^2(\Omega))} .
  \end{aligned}
\end{equation}
The last three estimates together imply
\begin{equation*}
  \begin{aligned}
  \|q\|_{H^{r} (0,T;L^2(\Omega))}
  & \le c+c\|q\|_{H^s(0,T;L^2(\Omega))}
  \le c+c_{\epsilon'} \|q\|_{L^2(0,T;L^2(\Omega))}  +\epsilon'\|q\|_{H^r (0,T;L^2(\Omega))} ,
   \end{aligned}
\end{equation*}
where the last step is due to the interpolation inequality \cite[Lemma 24.1]{Tartar:2006}
\begin{equation*}
  \|q\|_{H^s(0,T;L^2(\Omega))}   \le c_{\epsilon'} \|q\|_{L^2(0,T;L^2(\Omega))}  +\epsilon'\|q\|_{H^{r} (0,T;L^2(\Omega))}.
\end{equation*}
By choosing a small $\epsilon'>0$ and the pointwise boundedness of $q$,
cf. \eqref{equiv-cond-q-z}, we obtain
\begin{equation}\label{Reg-qh}
  \|q\|_{H^r (0,T;L^2(\Omega))}
  \le  c+c_{\epsilon'} \|q\|_{L^2(0,T;L^2(\Omega))} \le c.
\end{equation}
This shows the bound on $q$.  \eqref{Reg-qh} and \eqref{Reg-uh} give
the bound on $u$, and that of $z$ follows similarly.
\end{proof}

Next, we give an improved stability estimate on $q$.

\begin{lemma}\label{Lemma-Reg-q}
Let $p>{1}/{\alpha}$ be sufficiently large so that $\alpha\in(0,{1}/{p'})$, $u_d\in
W^{\alpha,p}(0,T;L^2(\Omega))$ and $f\in L^p(0,T;L^2(\Omega))$.
Then the optimal control $q$ satisfies:
\begin{equation*}
  \|q\|_{W^{1/p+\alpha-\epsilon,p}(0,T;L^2(\Omega))} \leq c,
\end{equation*}
where the constant $c$ depends on $\|u_d\|_{W^{\alpha,p}(0,T;L^2(\Omega))}$ and $\|f\|_{L^p(0,T;L^2(\Omega))}$.
\end{lemma}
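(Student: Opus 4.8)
The plan is to run a single forward pass through the optimality system \eqref{eqn:fde1}--\eqref{eqn:fde3}, upgrading the temporal regularity at each stage by means of the regularity pickup in Theorem \ref{thm:reg} and the boundedness \eqref{eqn:bdd-Puad} of the projection $P_{\Uad}$. The natural starting point is the pointwise boundedness of $q$: since $a\le q\le b$ a.e., one has $q\in L^\infty(0,T;L^\infty(\Om))\hookrightarrow L^p(0,T;L^2(\Om))$, with a bound depending only on $a$, $b$, $\Om$ and $T$. Together with $f\in L^p(0,T;L^2(\Om))$ this provides a source $f+q\in L^p(0,T;L^2(\Om))=W^{0,p}(0,T;L^2(\Om))$ for the state equation, from which the whole chain can be started without circularity.

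First I would apply Theorem \ref{thm:reg} with $s=0$ to \eqref{eqn:fde1}, obtaining $u\in W^{\alpha,p}(0,T;L^2(\Om))$ with $\|u\|_{W^{\alpha,p}}\le c(\|f\|_{L^p}+\|q\|_{L^p})\le c$. Because $p>1/\alpha$, i.e.\ $\alpha>1/p$, the embedding $W^{\alpha,p}\hookrightarrow W^{1/p-\epsilon,p}$ holds for any small $\epsilon>0$, and since $u_d\in W^{\alpha,p}\hookrightarrow W^{1/p-\epsilon,p}$ as well, the adjoint source satisfies $u-u_d\in W^{1/p-\epsilon,p}(0,T;L^2(\Om))$. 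I would then apply Theorem \ref{thm:reg} to the adjoint equation \eqref{eqn:fde2} with the largest admissible exponent $s=1/p-\epsilon<1/p$, which yields $z\in W^{\alpha+1/p-\epsilon,p}(0,T;L^2(\Om))$ together with $\|z\|_{W^{\alpha+1/p-\epsilon,p}}\le c\|u-u_d\|_{W^{1/p-\epsilon,p}}\le c$.

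Finally I would transfer this regularity to $q$ through the complementarity relation \eqref{equiv-cond-q-z}. The second hypothesis $\alpha\in(0,1/p')$ is precisely what guarantees $\alpha+1/p-\epsilon<1$, so the exponent lies in the range $[0,1]$ on which \eqref{eqn:bdd-Puad} asserts the boundedness of $P_{\Uad}$. Hence $\|q\|_{W^{1/p+\alpha-\epsilon,p}}=\|P_{\Uad}(-\gamma^{-1}z)\|_{W^{1/p+\alpha-\epsilon,p}}\le c\gamma^{-1}\|z\|_{W^{1/p+\alpha-\epsilon,p}}\le c$, which is the claimed estimate.

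The main point to watch is the interplay of the two hypotheses, since neither the regularity pickup nor the projection can be pushed further. The cap $s<1/p$ in Theorem \ref{thm:reg}, reflecting the compatibility threshold at $t=0$ (cf.\ Remark \ref{rem:ext-reg}), limits the gain from the adjoint solve to $1/p-\epsilon$ rather than the full $\alpha$; the condition $p>1/\alpha$ ensures that $u$ already sits above this threshold, so the gain is genuinely $1/p-\epsilon$. On the other side, the condition $\alpha<1/p'$ keeps the resulting smoothness order $\alpha+1/p-\epsilon$ strictly below $1$, which is exactly where the nonsmooth, merely Lipschitz map $P_{\Uad}$ ceases to preserve regularity. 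Because the target exponent $1/p+\alpha-\epsilon$ coincides with what this single pass delivers, no further bootstrapping is required, and I expect the only real care to be in book-keeping the $\epsilon$-losses in the Sobolev embeddings.
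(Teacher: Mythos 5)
Your proposal is correct and follows essentially the same route as the paper's proof: both rest on the pointwise bound $a\le q\le b$ to get $f+q\in L^p(0,T;L^2(\Omega))$, apply Theorem \ref{thm:reg} with $s=0$ to the state equation, use $p>1/\alpha$ to place $u-u_d$ in $W^{1/p-\epsilon,p}(0,T;L^2(\Omega))$, apply Theorem \ref{thm:reg} with $s=1/p-\epsilon<1/p$ to the adjoint equation, and finish with \eqref{equiv-cond-q-z} and the boundedness \eqref{eqn:bdd-Puad} of $P_{\Uad}$, which requires $1/p+\alpha-\epsilon<1$, i.e.\ $\alpha<1/p'$. The only difference is presentational: you run the chain forward (state $\to$ adjoint $\to$ projection) while the paper unwinds it backward from $\|q\|_{W^{r,p}}\le c\|z\|_{W^{r,p}}$.
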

\begin{proof}
The condition $\alpha\in(0,{1}/{p'})$ implies $r:=1/p+\alpha-\epsilon<1$. Thus
\eqref{eqn:bdd-Puad} and Theorem \ref{thm:reg} (with $s=r-\alpha$) imply
\begin{equation*}
\begin{aligned}
\|q\|_{W^{r,p}(0,T;L^2(\Omega))}
\le & c \|z\|_{W^{r,p}(0,T;L^2(\Omega))}
\le  c\|u-u_d\|_{W^{r-\alpha,p}(0,T;L^2(\Omega))} ,
\end{aligned}
\end{equation*}
Since $p>{1}/{\alpha}$, $r-\alpha={1}/{p}-\epsilon<\alpha$ and thus Theorem \ref{thm:reg} (with $s=0$)
and \eqref{equiv-cond-q-z} give
\begin{equation*}
\|u-u_d\|_{W^{r-\alpha,p}(0,T;L^2(\Omega))}
\le  c\|u-u_d\|_{W^{\alpha,p}(0,T;L^2(\Omega))}
\le  c\|f+q\|_{L^{p}(0,T;L^2(\Omega))}+c\le c,
\end{equation*}
The last two estimates together imply the desired result.
\end{proof}

\subsection{Spatially semidiscrete scheme}
Now we give a spatially semidiscrete scheme for problem \eqref{eqn:ob}--\eqref{eqn:fde}: find $q_h\in \Uad$ such that
\begin{equation}\label{eqn:ob-semi}
   \min_{q_h\in \Uad} J(u_h, q_h) = \tfrac12\| u_h - u_d  \|_{L^2(0,T;L^2\II)}^2 + \tfrac\gamma2\| q_h \|_{L^2(0,T;L^2\II)}^2,
\end{equation}
subject to the semidiscrete problem
\begin{equation}\label{eqn:fde-semi}
   \Dal u_h-\Delta_h u_h= P_h(f+q_h), \quad 0<t\leq T,\quad \mbox{with}\quad u_h(0)=0.
\end{equation}
Similar to Theorem \ref{thm:nec-opt}, problem \eqref{eqn:ob-semi}-\eqref{eqn:fde-semi} admits a unique solution
$q_h\in \Uad$. The first-order optimality system reads:
\begin{align}
   \Dal u_h-\Delta_h u_h&= P_h(f+q_h),\quad 0<t\leq T,\quad \mbox{with} \quad u_h(0)=0,\label{eqn:fde1-semi}\\
   _t\partial_T^\alpha z_h-\Delta_h z_h&= P_h(u_h-u_d),\quad 0\leq t<T,\quad \mbox{with}\quad z_h(T)=0, \label{eqn:fde2-semi}\\
     (\gamma q_h + z_h,&v-q_h)_{L^2(0,T;L^2(\Omega))}\ge 0, \quad \forall v\in \Uad.\label{eqn:fde3-semi}
\end{align}
The variational inequality \eqref{eqn:fde3-semi} is equivalent to
\begin{equation}\label{semi-contr-eqv}
    q_h=P_{\Uad} \left(-\gamma^{-1} z_h\right).
\end{equation}

For the approximation \eqref{eqn:fde1-semi}--\eqref{eqn:fde3-semi}, see \cite[Theorem 4.6]{ZhouGong:2016} for an error estimate.
\begin{theorem}\label{thm:err-space}
For $f, u_d\in L^2(0,T;L^2(\Omega))$, let $(u,z,q)$ and $(u_h,z_h,q_h)$ be the solutions of
problems \eqref{eqn:fde1}--\eqref{eqn:fde3} and \eqref{eqn:fde1-semi}--\eqref{eqn:fde3-semi}, respectively.
Then there hold
\begin{equation*}
  \begin{aligned}
  \| u- u_h  \|_{L^2(0,T;L^2(\Omega))} + \| z-z_h  \|_{L^2(0,T;L^2(\Omega))} + \| q-q_h\|_{L^2(0,T;L^2(\Omega))} & \le c  h^2,\\
  \|\nabla( u- u_h) \|_{L^2(0,T;L^2(\Omega))} + \|\nabla( z-z_h  )\|_{L^2(0,T;L^2(\Omega))}  & \leq ch.
  \end{aligned}
\end{equation*}
\end{theorem}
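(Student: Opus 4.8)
The plan is to use the standard variational-inequality machinery for control-constrained problems, with the fractional integration-by-parts identity \eqref{eqn:adjoint} supplying the crucial sign in the duality step. First I would test \eqref{eqn:fde3} with $v=q_h$ and \eqref{eqn:fde3-semi} with $v=q$, then add the two inequalities; after cancelling the first-order terms this gives
\begin{equation*}
\gamma\|q-q_h\|_{L^2(0,T;L^2\II)}^2\le (z-z_h,\,q_h-q)_{L^2(0,T;L^2\II)},
\end{equation*}
so the whole estimate is reduced to bounding the adjoint mismatch tested against $q_h-q$.

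Next I would introduce two auxiliary semidiscrete quantities: the state $u_h(q)$ solving \eqref{eqn:fde1-semi} but driven by the \emph{continuous} optimal control $q$, i.e.\ with source $P_h(f+q)$, and the adjoint $z_h(q)$ solving \eqref{eqn:fde2-semi} with source $P_h(u_h(q)-u_d)$. Splitting $z-z_h=(z-z_h(q))+(z_h(q)-z_h)$ separates a pure finite-element consistency error from a control-difference term. The main obstacle, and the heart of the argument, is to show that the cross term $(z_h(q)-z_h,\,q_h-q)$ is nonpositive. Setting $w:=u_h(q)-u_h$ and $\zeta:=z_h(q)-z_h$, these differences satisfy $\Dal w-\Delta_h w=P_h(q-q_h)$ with $w(0)=0$ and $\dDal\zeta-\Delta_h\zeta=P_h w$ with $\zeta(T)=0$. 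Since $\zeta\in X_h$, using selfadjointness of $P_h$, the identity \eqref{eqn:adjoint} applied componentwise in $L^2\II$ (legitimate because $w\in H^\alpha_L(0,T;L^2\II)$ and $\zeta\in H^\alpha_R(0,T;L^2\II)$, so no temporal boundary terms survive), and the selfadjointness of $\Delta_h$, I obtain
\begin{equation*}
(\zeta,\,q-q_h)_{L^2(0,T;L^2\II)}=(\zeta,\Dal w-\Delta_h w)_{L^2(0,T;L^2\II)}=(\dDal\zeta-\Delta_h\zeta,\,w)_{L^2(0,T;L^2\II)}=\|w\|_{L^2(0,T;L^2\II)}^2.
\end{equation*}
Hence $(z_h(q)-z_h,\,q_h-q)=-\|w\|_{L^2(0,T;L^2\II)}^2\le 0$ and may simply be dropped, leaving $\gamma\|q-q_h\|_{L^2(0,T;L^2\II)}^2\le\|z-z_h(q)\|_{L^2(0,T;L^2\II)}\|q-q_h\|_{L^2(0,T;L^2\II)}$.

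It then remains to bound the consistency error $\|z-z_h(q)\|_{L^2(0,T;L^2\II)}$, which involves only fixed data and is therefore a pure spatial Galerkin error for the subdiffusion (adjoint) equation. I would route it through the semidiscrete adjoint $z_h(u)$ generated by the continuous state $u$: the difference $z-z_h(u)$ is the Galerkin error for \eqref{eqn:fde-adjoint} with the fixed $L^2(0,T;L^2\II)$ source $u-u_d$, which is $O(h^2)$ by the nonsmooth-data error estimate; while $z_h(u)-z_h(q)$ solves a discrete adjoint equation with source $P_h(u-u_h(q))$, so by $L^2$-stability it is controlled by $c\|u-u_h(q)\|_{L^2(0,T;L^2\II)}$, the Galerkin error for the state equation with fixed control $q$, again $O(h^2)$. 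This yields $\|z-z_h(q)\|_{L^2(0,T;L^2\II)}\le ch^2$ and therefore $\|q-q_h\|_{L^2(0,T;L^2\II)}\le ch^2$.

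Finally I would bootstrap. Writing $u-u_h=(u-u_h(q))+(u_h(q)-u_h)$, the first term is $O(h^2)$ and $u_h(q)-u_h$ solves a discrete state equation with source $P_h(q-q_h)$, so stability gives $\|u_h(q)-u_h\|_{L^2(0,T;L^2\II)}\le c\|q-q_h\|_{L^2(0,T;L^2\II)}\le ch^2$; the bound on $z-z_h$ follows identically through $z_h(q)$. For the gradient estimates, $\|\nabla(u-u_h(q))\|_{L^2(0,T;L^2\II)}\le ch$ is the standard $O(h)$ Galerkin gradient error, whereas the discrete maximal regularity \eqref{eqn:max-lp-h} gives $\|\Delta_h(u_h(q)-u_h)\|_{L^2(0,T;L^2\II)}\le c\|q-q_h\|_{L^2(0,T;L^2\II)}\le ch^2$, and then integrating $\|\nabla(u_h(q)-u_h)\|_{L^2\II}^2=-(\Delta_h(u_h(q)-u_h),\,u_h(q)-u_h)_{L^2\II}$ in time and applying Cauchy--Schwarz shows $\|\nabla(u_h(q)-u_h)\|_{L^2(0,T;L^2\II)}\le ch^2$, so the $O(h)$ term dominates; the adjoint gradient bound is the same. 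The anticipated difficulty is concentrated entirely in the duality/sign step above and in invoking the correct nonsmooth-data Galerkin estimates for the fractional equation.
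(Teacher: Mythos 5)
Your argument is correct, but note that the paper itself does not prove Theorem \ref{thm:err-space} at all: it simply cites \cite[Theorem 4.6]{ZhouGong:2016}, so there is no in-paper proof to compare against. Your route is essentially the one used in that reference, and it is also exactly the continuous-in-time analogue of the paper's own fully discrete analysis: testing the two variational inequalities \eqref{eqn:fde3} and \eqref{eqn:fde3-semi} against each other, introducing the auxiliary state $u_h(q)$ driven by the other level's control together with its adjoint, and extracting the sign of the cross term via the integration-by-parts identity \eqref{eqn:adjoint} is precisely what Lemma \ref{lem:nonneg} and Theorem \ref{thm:full} do in discrete form, where the same nonnegativity appears as $[{\bf q}_h-{\bf Q}_h , {\bf Z}_h-{\bf Z}_h({\bf U}_h(q_h))]_\tau=-\vv {\bf U}_h (q_h)-{\bf U}_h \vv_\tau^2\le 0$. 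Your duality step is legitimate: $w$ and $\zeta$ have the $H^\alpha_L/H^\alpha_R$ regularity required by \eqref{eqn:adjoint} thanks to maximal $L^2$-regularity, and the self-adjointness of $P_h$ and $\Delta_h$ closes the computation. The one point to emphasize is that the ``pure consistency errors'' $\|u-u_h(q)\|_{L^2(0,T;L^2\II)}$ and $\|z-z_h(u)\|_{L^2(0,T;L^2\II)}$ are not routine: with only an $L^2(0,T;L^2(\Omega))$ source, the naive Ritz-projection splitting fails (the resulting term $(R_h-P_h)\Delta u$ is merely $O(1)$, since $\Delta u$ has no extra spatial smoothness), and the $O(h^2)$ and $O(h)$ bounds require the nonsmooth-data machinery (Laplace transform/resolvent estimates or space--time duality) established in \cite{ZhouGong:2016} --- i.e., exactly the content of the result the paper cites. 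So your proposal is a correct reduction of the theorem to those direct-problem estimates, matching the standard approach, rather than a self-contained derivation.
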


Next, we present the regularity of the semidiscrete solution $(u_h,z_h,q_h)$. The proof is similar
to the continuous case in Lemmas \ref{Lemma-Reg-quz} and \ref{Lemma-Reg-q} and hence omitted.
\begin{lemma}\label{lem:reg-dis}
Let $s\in(0,1/2)$, $u_d\in H^{s}(0,T;L^2(\Omega))$ and $f\in H^{s}(0,T;L^2(\Omega))$.
Then the solution $(u_h,z_h,q_h)$ of problem \eqref{eqn:fde1-semi}--\eqref{eqn:fde3-semi} satisfies the following estimate:
\begin{equation*}
  \|q_h \|_{H^{\min(1,\alpha+s)} (0,T;L^2(\Omega))} +\|u_h\|_{H^{\alpha+s}(0,T;L^2(\Omega))}+
  \|z_h\|_{H^{\alpha+s}(0,T;L^2(\Omega))}\leq c.
\end{equation*}
Further, for $u_d\in W^{\alpha,p}(0,T;L^2(\Omega))$, $f\in L^p(0,T;L^2(\Omega))$, with $p>{1}/{\alpha}$ and $\alpha\in(0,{1}/{p'})$, there holds
\begin{equation*}
  \|q_h\|_{W^{1/p+\alpha-\epsilon,p}(0,T;L^2(\Omega))} \leq c.
\end{equation*}
\end{lemma}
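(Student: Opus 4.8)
The plan is to mirror the arguments of Lemmas \ref{Lemma-Reg-quz} and \ref{Lemma-Reg-q}, replacing the continuous regularity estimate of Theorem \ref{thm:reg} by its semidiscrete analogue \eqref{eqn:stability-semi} (together with the corresponding estimate for the adjoint equation), and checking throughout that every constant produced is independent of $h$. The only additional ingredient needed is the uniform boundedness of $P_h$ on the Bochner--Sobolev spaces $W^{s,p}(0,T;L^2(\Omega))$: since $P_h$ acts only on the spatial variable and is a contraction on $L^2(\Omega)$, it satisfies $\|P_h(v(t)-v(\xi))\|_{L^2(\Omega)}\le\|v(t)-v(\xi)\|_{L^2(\Omega)}$, so it does not increase the temporal Slobodecki\v{\i} seminorm \eqref{eqn:SS-seminorm} nor the $L^p(0,T;L^2(\Omega))$ norm, giving $\|P_h v\|_{W^{s,p}(0,T;L^2(\Omega))}\le\|v\|_{W^{s,p}(0,T;L^2(\Omega))}$ with a constant independent of $h$.

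For the first estimate, set $r=\min(1,\alpha+s)$. First I would use the projection identity \eqref{semi-contr-eqv} and the bound \eqref{eqn:bdd-Puad} to get $\|q_h\|_{H^r(0,T;L^2(\Omega))}\le c\|z_h\|_{H^r}\le c\|z_h\|_{H^{\alpha+s}}$. Applying the semidiscrete adjoint regularity to \eqref{eqn:fde2-semi} together with the $P_h$-stability yields $\|z_h\|_{H^{\alpha+s}}\le c\|P_h(u_h-u_d)\|_{H^s}\le c\|u_h\|_{H^s}+c$, and applying the semidiscrete forward regularity \eqref{eqn:stability-semi} to \eqref{eqn:fde1-semi} gives $\|u_h\|_{H^{\alpha+s}}\le c\|P_h(f+q_h)\|_{H^s}\le c+c\|q_h\|_{H^s}$. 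Chaining these, with $\|u_h\|_{H^s}\le\|u_h\|_{H^{\alpha+s}}$, produces $\|q_h\|_{H^r}\le c+c\|q_h\|_{H^s}$. Since $s<\min(1,\alpha+s)=r$, the interpolation inequality \cite[Lemma 24.1]{Tartar:2006} lets me split $\|q_h\|_{H^s}\le c_{\epsilon'}\|q_h\|_{L^2(0,T;L^2(\Omega))}+\epsilon'\|q_h\|_{H^r}$ and absorb the last term; the pointwise bound $a\le q_h\le b$ makes $\|q_h\|_{L^2(0,T;L^2(\Omega))}$ bounded independently of $h$, whence $\|q_h\|_{H^r}\le c$, and feeding this back recovers the bounds on $u_h$ and $z_h$.

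For the second estimate I would repeat the argument of Lemma \ref{Lemma-Reg-q} in the discrete setting. Put $r=1/p+\alpha-\epsilon$, which is $<1$ because $\alpha\in(0,1/p')$. Using \eqref{eqn:bdd-Puad}, the semidiscrete adjoint regularity with $s=r-\alpha=1/p-\epsilon<1/p$, and $P_h$-stability, I obtain $\|q_h\|_{W^{r,p}(0,T;L^2(\Omega))}\le c\|z_h\|_{W^{r,p}}\le c\|u_h-u_d\|_{W^{r-\alpha,p}}$. Since $p>1/\alpha$ forces $r-\alpha=1/p-\epsilon<\alpha$, the embedding $W^{\alpha,p}(0,T;L^2(\Omega))\hookrightarrow W^{r-\alpha,p}(0,T;L^2(\Omega))$ and the semidiscrete forward regularity with $s=0$ give $\|u_h-u_d\|_{W^{r-\alpha,p}}\le c\|u_h\|_{W^{\alpha,p}}+c\le c\|P_h(f+q_h)\|_{L^p(0,T;L^2(\Omega))}+c\le c\|f\|_{L^p}+c\|q_h\|_{L^p}+c$, which is bounded because of $f\in L^p(0,T;L^2(\Omega))$ and the pointwise box bound on $q_h$.

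The step I expect to require the most care is verifying that \eqref{eqn:stability-semi} and its adjoint counterpart genuinely hold with $h$-independent constants in these fractional-order norms, and that $P_h$ is uniformly bounded on $W^{s,p}(0,T;L^2(\Omega))$; once these two facts are secured, the estimates go through exactly as in the continuous case, since every remaining inequality---the projection bound \eqref{eqn:bdd-Puad}, the interpolation inequality, and the pointwise box bound on $q_h$---is insensitive to the spatial discretization.
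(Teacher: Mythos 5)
Your proposal is correct and follows exactly the route the paper intends: the paper omits the proof of Lemma \ref{lem:reg-dis}, stating only that it is "similar to the continuous case in Lemmas \ref{Lemma-Reg-quz} and \ref{Lemma-Reg-q}," and your argument is precisely that mirroring, with the semidiscrete regularity \eqref{eqn:stability-semi} (and its adjoint analogue) replacing Theorem \ref{thm:reg}. Your additional verification that $P_h$, being a spatial contraction acting pointwise in time, is uniformly bounded on $W^{s,p}(0,T;L^2(\Omega))$ is exactly the detail needed to guarantee $h$-independent constants, so nothing is missing.
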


Last, we derive a pointwise-in-time error estimate.
\begin{theorem}\label{thm:err-space-inf}
For $f,~ u_d\in H^1(0,T;L^2(\Omega))$, let $(u,z,q)$ and $(u_h,z_h,q_h)$ be the solutions
of problems \eqref{eqn:fde1}--\eqref{eqn:fde3} and \eqref{eqn:fde1-semi}--\eqref{eqn:fde3-semi}, respectively.
Then there holds
\begin{equation*}
  \| u- u_h  \|_{L^\infty(0,T;L^2(\Omega))} + \| z-z_h  \|_{L^\infty(0,T;L^2(\Omega))} + \| q-q_h\|_{L^\infty(0,T;L^2(\Omega))} \le c \ell_h^2 h^2
\end{equation*}
with $\ell_h=\log(2+1/h)$, where the constant $c$ depends on $\|f\|_{H^1(0,T;L^2(\Omega))}$ and $\|u_d\|_{H^1(0,T;L^2(\Omega))}$.
\end{theorem}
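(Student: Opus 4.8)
The plan is to reduce the whole estimate to a pointwise-in-time bound on the adjoint error and then to upgrade the already-available $L^2(0,T;L^2\II)$ estimate of Theorem \ref{thm:err-space} to an $L^\infty(0,T;L^2\II)$ one by a finite regularity–pickup (bootstrapping) argument. First I would exploit the complementarity conditions \eqref{equiv-cond-q-z} and \eqref{semi-contr-eqv}: since $P_{\Uad}$ is pointwise $1$-Lipschitz, $\|q(t)-q_h(t)\|_{L^2\II}\le\gamma^{-1}\|z(t)-z_h(t)\|_{L^2\II}$ for a.e.\ $t$, whence $\|q-q_h\|_{L^\infty(0,T;L^2\II)}\le\gamma^{-1}\|z-z_h\|_{L^\infty(0,T;L^2\II)}$, so it suffices to control $\|u-u_h\|_{L^\infty(0,T;L^2\II)}$ and $\|z-z_h\|_{L^\infty(0,T;L^2\II)}$. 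Because $W^{\sigma,2}(0,T;L^2\II)\hookrightarrow C([0,T];L^2\II)$ for any $\sigma>1/2$, it is in fact enough to bound these two errors in $W^{\sigma,2}(0,T;L^2\II)$ by $c\ell_h^2h^2$ for a single $\sigma>1/2$.

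Next I introduce the intermediate semidiscrete solutions $u_h(q)$, the state \eqref{eqn:fde-semi} driven by the continuous control $q$, and $z_h(u)$, the adjoint \eqref{eqn:fde2-semi} driven by the continuous state $u$, and split $u-u_h=(u-u_h(q))+(u_h(q)-u_h(q_h))$ and $z-z_h=(z-z_h(u))+(z_h(u)-z_h(u_h))$. The consistency parts $u-u_h(q)$ and $z-z_h(u)$ are pure spatial finite element errors for the forward/adjoint subdiffusion problems with data $f+q$ and $u-u_d$ that are smooth in time (here $f,u_d\in H^1(0,T;L^2\II)$, and Lemmas \ref{Lemma-Reg-quz} and \ref{lem:reg-dis} supply the temporal regularity of $q,u,z$); these I would bound by $c\ell_h^2h^2$ in the relevant $W^{\sigma,2}(0,T;L^2\II)$ norms through pointwise-in-time spatial FEM estimates, and it is here that the logarithmic factor $\ell_h^2$ enters. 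The perturbation parts $w:=u_h(q)-u_h(q_h)$ and $\zeta:=z_h(u)-z_h(u_h)$ solve semidiscrete forward/adjoint equations with sources $P_h(q-q_h)$ and $P_h(u-u_h)$, respectively.

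The core is the bootstrap. By the semidiscrete analogue of Theorem \ref{thm:reg} each application of the forward or adjoint solution operator gains $\alpha$ orders of temporal smoothness, while \eqref{eqn:bdd-Puad} transfers smoothness (up to order one) from $z-z_h$ to $q-q_h$. Starting from Theorem \ref{thm:err-space}, i.e.\ $\|q-q_h\|_{L^2(0,T;L^2\II)}\le ch^2$, the cycle control$\to$state$\to$adjoint yields $w\in W^{\alpha,2}(0,T;L^2\II)$ and then $\zeta\in W^{2\alpha,2}(0,T;L^2\II)$, each still of size $\le c\ell_h^2h^2$; combined with the consistency parts this gives $\|z-z_h\|_{W^{2\alpha,2}(0,T;L^2\II)}\le c\ell_h^2h^2$ and hence, by \eqref{eqn:bdd-Puad}, $\|q-q_h\|_{W^{2\alpha,2}(0,T;L^2\II)}\le c\ell_h^2h^2$. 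Iterating the cycle raises the temporal Sobolev index by $2\alpha$ each time while preserving the $O(\ell_h^2h^2)$ rate, since every step is a bounded linear estimate and hence does not degrade the order; after finitely many cycles the index exceeds $1/2$, and the embedding of the first paragraph delivers the claimed $L^\infty(0,T;L^2\II)$ bound for $u-u_h$ and $z-z_h$, after which the Lipschitz reduction finishes the proof.

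The main obstacle is precisely the limited smoothing of subdiffusion: for $\alpha\le1/2$ one has $W^{\alpha,2}(0,T;L^2\II)\not\hookrightarrow C([0,T];L^2\II)$, so no single smoothing step reaches $L^\infty$ in time, which is what forces the finite bootstrap. In carrying it out one must respect the constraint $s<1/p=1/2$ in Theorem \ref{thm:reg} at each regularity pickup, splitting off the incompatible time-constant component of the data as in the proof of Theorem \ref{thm:err-time2} whenever the index approaches $1/2$, and one needs the pointwise-in-time spatial FEM consistency estimate carrying the sharp factor $\ell_h^2$.
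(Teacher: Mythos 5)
Your overall architecture (Lipschitz reduction of $q-q_h$ to $z-z_h$, splitting into FEM consistency parts $u-u_h(q)$, $z-z_h(u)$ plus perturbation parts driven by $P_h(q-q_h)$, $P_h(u-u_h)$, then a finite bootstrap starting from Theorem \ref{thm:err-space}) matches the paper's skeleton, but your bootstrap runs in the wrong variable, and this creates a genuine gap. You iterate the \emph{temporal smoothness} index at fixed $p=2$: each cycle feeds the current error into Theorem \ref{thm:reg} to gain $\alpha$ (or $2\alpha$ per control$\to$state$\to$adjoint cycle) and stops once the index exceeds $1/2$. For this to preserve the rate, you need the \emph{sources} of each cycle --- which contain the FEM consistency errors $\varrho=u-u_h(q)$ and $z-z_h(u)$ --- to be bounded in $W^{s,2}(0,T;L^2(\Omega))$, $s>0$, with the full rate $O(\ell_h^2h^2)$. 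No such estimate is available: the pointwise-in-time FEM estimate you invoke (the one the paper cites from the literature) controls only $\|\varrho\|_{L^\infty(0,T;L^2(\Omega))}$, i.e.\ zero temporal smoothness, and interpolating the $L^2$-in-time bound $\|\varrho\|_{L^2(0,T;L^2(\Omega))}\le ch^2$ against the $O(1)$ bound in $H^{\alpha+s}(0,T;L^2(\Omega))$ degrades the rate to $h^{2(1-\theta)}$. Concretely, after one cycle the part of $\zeta=z_h(u)-z_h(u_h)$ driven by $P_h\varrho$ has temporal smoothness exactly $\alpha$ (pickup from an $L^2$-in-time source) and cannot be raised further, so for $\alpha\le 1/2$ --- the only nontrivial regime --- your iteration stalls below the embedding threshold $\sigma>1/2$ and never reaches $L^\infty$ in time. (Your worry about splitting off incompatible constants is, by contrast, moot: with $p=2$ all source indices stay below $1/2$, where $W^{s,2}=W^{s,2}_L$.)

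The paper circumvents exactly this obstruction by bootstrapping the \emph{integrability} exponent at fixed smoothness: with $\vartheta=u_h(q)-u_h$, maximal $L^p$-regularity gives $\|{_0\partial_t^\alpha}\vartheta\|_{L^p(0,T;L^2(\Omega))}\le c\|q-q_h\|_{L^p(0,T;L^2(\Omega))}\le c\|z-z_h\|_{L^p(0,T;L^2(\Omega))}$, a duality-type argument on $w_h=P_hz-z_h$ (applying $\Delta_h^{-1}$ to its equation, then maximal regularity and the Ritz-projection error) yields $\|z-z_h\|_{L^p(0,T;L^2(\Omega))}\le c\|\vartheta\|_{L^p(0,T;L^2(\Omega))}+c\ell_h^2h^2$, and the Sobolev embedding $W^{\alpha,p}(0,T;L^2(\Omega))\hookrightarrow L^{p_\alpha}(0,T;L^2(\Omega))$, $p_\alpha=p/(1-p\alpha)$, upgrades $p$ to $p_\alpha$; finitely many iterations from $p=2$ reach $p_\alpha=\infty$. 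The crucial advantage is that every quantity appearing in the cycle is a Lebesgue-in-time norm, so the consistency error enters only through $\|\varrho\|_{L^p}\le\|\varrho\|_{L^\infty}\le c\ell_h^2h^2$, which holds uniformly in $p$, and the $O(\ell_h^2h^2)$ rate is never degraded. To salvage your scheme you would have to prove new fractional-order-in-time FEM error estimates of the form $\|u-u_h(q)\|_{H^{s}(0,T;L^2(\Omega))}\le c\ell_h^2 h^2$ for $s$ up to $1/2$, which is a nontrivial result not contained in the paper or its references.
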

\begin{proof}
We employ the splitting
$u - u_h = (u - u_h(q)) + (u_h(q) - u_h):=\varrho+\vartheta,$
where $u_h(q)\in X_h$ solves
\begin{equation*}
 \partial_t^\alpha u_h(q) -\Delta_h u_h(q) = P_h(f + q),\quad 0<t\leq T,\quad \text{with}\quad u_h(q)(0)=0.
\end{equation*}
Then $u_h(q)$ is the semidiscrete solution of \eqref{eqn:fde-forward} with $g=f+q$, and $\varrho$
is the FEM error for the direct problem. By \cite[Theorem 3.7]{JinLazarovPasciakZhou:2015} and Lemma \ref{Lemma-Reg-q}, we have
\begin{equation}\label{est-varrho}
 \| \varrho  \|_{L^\infty(0,T;L^2\II)} \le c \ell_h^2  h^2\|f+q\|_{L^\infty(0,T;L^2(\Omega))}\leq c\ell_h^2h^2.
\end{equation}
Since $\vartheta$ satisfies $\partial_t^\alpha \vartheta -\Delta_h \vartheta= P_h (q-q_h)$, for $0<t\leq T$ with $\vartheta(0)=0$,
\eqref{eqn:max-lp-h}, $L^2(\Omega)$-stability of $P_h$, the conditions \eqref{equiv-cond-q-z}
and \eqref{semi-contr-eqv}, and the pointwise contractivity of $P_{\Uad}$ imply
\begin{equation}\label{est-vartheta-1}
\begin{aligned}
 &\|{_0\partial_t^\alpha}\vartheta  \|_{L^p(0,T;L^2(\Omega))}
\le \| P_h (q -  q_h)  \|_{L^p(0,T;L^2(\Omega))} \\
\le& c \|  q - q_h  \|_{L^p(0,T;L^2(\Omega))} \le c \| z-z_h \|_{L^p(0,T;L^2(\Omega))}  .
\end{aligned}
\end{equation}
Next, it follows from \eqref{eqn:fde2}, \eqref{eqn:fde2-semi} and the identity $P_h\Delta=\Delta_hR_h$
(with $R_h:H^1(\Omega)\rightarrow X_h$ being Ritz projection) that $w_h:=P_hz-z_h$ satisfies $w_h(T)=0$
\begin{align*}
_t\partial_T^\alpha w_h-\Delta_h w_h= P_hu-u_h
-\Delta_h(P_hz-R_hz),\quad 0\leq t<T,
\end{align*}
and thus
\begin{align}
_t\partial_T^\alpha \Delta_h^{-1}w_h-\Delta_h \Delta_h^{-1}w_h&= \Delta_h^{-1}(P_hu-u_h) - (P_hz-R_hz) .
\end{align}
The maximal $L^p$ regularity \eqref{eqn:max-lp-h} and triangle inequality imply
\begin{align*}
\|w_h\|_{L^p(0,T;L^2(\Omega))}
&\le c\|\Delta_h^{-1}(P_hu-u_h) - (P_hz-R_hz)\|_{L^p(0,T;L^2(\Omega))} \\
&\le c\|P_hu-u_h\|_{L^p(0,T;L^2(\Omega))}
     +c\|P_hz-R_hz\|_{L^p(0,T;L^2(\Omega))}.
\end{align*}
The $L^2(\Omega)$-stability of $P_h$ and triangle inequality yield
\begin{align*}
 \|P_hu-u_h\|_{L^p(0,T;L^2(\Omega))} \le c\|u-u_h\|_{L^p(0,T;L^2(\Omega))} \leq c(\|\vartheta\|_{L^p(0,T;L^2(\Omega))}+\|\varrho\|_{L^p(0,T;L^2(\Omega))}),
\end{align*}
and Theorem \ref{thm:reg} (with $s=0$) and lemma \ref{lem:reg-dis} give
\begin{align*}
\|P_hz-R_hz\|_{L^p(0,T;L^2(\Omega))}
\le c\|z\|_{L^p(0,T;H^2(\Omega))} h^2 \leq c\|u-u_d\|_{L^p(0,T;L^2(\Omega))} h^2\leq ch^2.
\end{align*}
The last three estimates and \eqref{est-varrho} yield
\begin{equation*}
  \|w_h\|_{L^p(0,T;L^2(\Omega))} \le c\|\vartheta\|_{L^p(0,T;L^2(\Omega))}+c\ell_h^2h^2.
\end{equation*}
Thus repeating the preceding argument yields
\begin{equation*}
  \begin{aligned}
   \|z-z_h\|_{L^p(0,T;L^2(\Omega))}&\leq \|z-P_hz\|_{L^p(0,T;L^2(\Omega))}+\|w_h\|_{L^p(0,T;L^2(\Omega))}\\
    &\leq c\|\vartheta\|_{L^p(0,T;L^2(\Omega))}+c\ell_h^2h^2.
  \end{aligned}
\end{equation*}
Substituting it into \eqref{est-vartheta-1} and by Sobolev embedding $W^{\alpha,p}
(0,T;L^2(\Omega))\hookrightarrow L^{p_\alpha}(0,T;L^2(\Omega))$, with the critical exponent $p_\alpha=p/(1-p\alpha)$
if $p\alpha<1$, and $p_\alpha=\infty$ if $p\alpha>1$:
\begin{equation}\label{eqn:est-vartheta-2}
 \|\vartheta  \|_{L^{{p_\alpha}}(0,T;L^2(\Omega))}
 \le c\|{_0\partial_t^\alpha}\vartheta  \|_{L^p(0,T;L^2(\Omega))}
 \le c\|\vartheta\|_{L^p(0,T;L^2(\Omega))}+c\ell_h^2h^2 .
\end{equation}
A finite number of repeated applications of this inequality yields
\begin{equation}\label{eqn:est-vartheta-3}
 \| \vartheta  \|_{L^\infty(0,T;L^2(\Omega))} \le c \|\vartheta \|_{L^2(0,T;L^2(\Omega))}+ c\ell_h^2 h^2  \le c \ell_h^2 h^2,
\end{equation}
where we have used the fact that, by maximal $L^p$ regularity \eqref{eqn:max-lp-h} and Theorem \ref{thm:err-space},
\begin{equation*}
  \|\vartheta\|_{L^2(0,T;L^2(\Omega))}=\|u_h(q)-u_h\|_{L^2(0,T;L^2(\Omega))}\leq c\|q-q_h\|_{L^2(0,T;L^2(\Omega))}\leq ch^2.
\end{equation*}
This gives the desired bound on $\|u-u_h\|_{L^\infty(0,T;L^2\II)}$. The bounds on $\|z - z_h\|_{L^\infty(0,T;L^2\II)}$
and  $\|q  -q_h\|_{L^\infty(0,T;L^2\II)}$ follow similarly by the contraction property of $P_{U_\mathrm{ad}}$.
\end{proof}

\subsection{Fully discrete scheme}
Now we turn to the fully discrete approximation of \eqref{eqn:ob}--\eqref{eqn:fde}, with L1 scheme
or BE-CQ time stepping. First, we define a discrete admissible set
$$
U_{ad}^\tau = \{\vec Q_h=(Q_h^{n-1})_{n=1}^N: a\le Q^{n-1}\le b,\,\,~n=1,2,...,N \},
$$
and consider the following fully discrete problem:
\begin{equation*}
    \min_{\vec Q_h \in U_{ad}^\tau} \frac\tau2\sum_{n=1}^N \Big(\| U_h^n - u_d^n  \|_{L^2\II}^2 + \gamma \| Q_h^{n-1}  \|_{L^2\II}^2 \Big),
\end{equation*}
subject to the fully discrete problem
\begin{equation*}
  \bPtau U_h^n-\Delta_h U_h^n = f_h^n+P_h Q_h^{n-1},  \quad n=1,2,...,N, \quad \text{with} ~U_h^0 =0,
\end{equation*}
with $u_d^n=u_d(t_n)$ and $f_h^n=P_hf(t_n)$.
Let $\bar\partial_\tau^\alpha \varphi^n$ be the L1/BE-CQ approximation of $_t\partial_T^\alpha \varphi (t_n)$:
\begin{equation*}
  \bar\partial_\tau^\alpha \varphi^{N-n}
  =\tau^{-\alpha}\sum_{j=0}^{n} \beta_{n-j} \varphi^{N-j} .
\end{equation*}
Then the fully discrete problem is to find $(U_h^n, Z_h^n, Q_h^n)$ such that
\begin{align}
\bPtau U_h^n-\Delta_h U_h^n &= f_h^n+P_h Q_h^{n-1},  && n=1,2,...,N, \quad \text{with } U_h^0 =0 ,
\label{eqn:fde1-fully} \\
\bar\partial_\tau^\alpha Z_h^{n-1}-\Delta_h Z_h^{n-1} &= U_h^n-P_h u_d^n ,&& n=1,2,...,N,\quad \text{with } Z_h^N =0 ,
\label{eqn:fde2-fully}\\
   (\gamma Q_h^{n-1}  + Z_h^{n-1}  , v-Q_h^{n-1} ) &\ge 0, && \forall\, v\in L^2\II ~~\mbox{s.t.}~~a\le v\le b.\label{eqn:fde3-fully}
\end{align}
Similar to \eqref{eqn:fde3-semi}, \eqref{eqn:fde3-fully} can be rewritten as
\begin{equation}\label{fully-contr-eqv}
Q_h^{n-1}=P_{U_{ad}}\big(-\gamma^{-1}Z_h^{n-1}\big),\quad n=1,2,\ldots,N .
\end{equation}

To simplify the notation, we define a discrete $L^2(0,T;L^2(\Omega))$ inner product $[\cdot,\cdot]_\tau$ by
\begin{align*}
 [{\bf v},{\bf w}]_\tau = \tau\sum_{n=1}^N (v_n,w_{n}) \quad \forall\,{\bf v}=(v_n)_{n=1}^N,
{\bf w}=(w_n)_{n=1}^{N} \in L^2(\Omega)^N ,
\end{align*}
and denote by $\vv\cdot\vv_\tau$ the induced norm.
Let $\bPtau{\bf v}=(\bPtau v^n_h)_{n=1}^N \in L^2(\Omega)^N$ and $\bar\partial_\tau^\alpha{\bf w}=(\bar\partial_\tau^\alpha w^{n-1}_h)_{n=1}^{N} \in L^2(\Omega)^N.$
Then the discrete integration by parts formula holds \cite[Section 5.2]{ZhouGong:2016}
\begin{align}\label{eqn:L1-adjoint}
[\bPtau {\bf v}, {\bf w}]_\tau
=[{\bf v},\bar\partial_\tau^\alpha {\bf w}]_\tau
\quad\forall\, {\bf v} ,{\bf w} \in L^2(\Omega)^N  .
\end{align}
Thus, $\bar\partial_\tau^\alpha$ is the adjoint to $\bPtau$ with respect to $[\cdot,\cdot]_\tau$. Let
\begin{align*}
&{\bf U}_h=(U_h^n)_{n=1}^N, && {\bf Z}_h=(Z_h^{n-1})_{n=1}^N, &&
{\bf Q}_h=(Q_h^{n-1})_{n=1}^N ,\\
&{\bf u}_h=(u_h(t_n))_{n=1}^N, && {\bf z}_h=(z_h(t_{n-1}))_{n=1}^N, &&
{\bf q}_h=(q_h(t_{n-1}))_{n=1}^N .
\end{align*}

Next we introduce two auxiliary problems. Let $\mathbf{U}_h(q_h)=(U_h^n(q_h)) )_{n=1}^N\in X_h^N$ solve
\begin{equation}\label{eqn:Uh_q}
  \bPtau U_h^n(q_h)  -\Delta_hU_h^n(q_h)= f_h^n + q_h(t_{n-1}) ,\quad n=1,\ldots,N,\quad \mbox{with }U_h^0(q_h)=0 .
\end{equation}
By Lemma \ref{lem:reg-dis}, the pointwise evaluation $q_h(t_n)$ does make sense, and thus problem
\eqref{eqn:Uh_q} is well defined. For any ${\bf v}_h=(v_h^n)_{n=1}^N$, let ${\bf Z}_h({\bf v}_h)=
(Z_h^{n-1}({\bf v}_h))_{n=1}^N\in X_h^N$ solve
\begin{equation}\label{Eq-Zh-vh}
  \bar\partial_\tau^\alpha Z_h^{n-1}({\bf v}_h) -\Delta_h Z_h^{n-1}({\bf v}_h)
= v_h^n - P_h u_d^n ,\,\, n=1,2,\ldots,N,\,\, \text{with } Z_h^N({\bf v}_h) =0 .
\end{equation}

The rest of this part is devoted to error analysis. First, we bound $\vv{\bf q}_h-{\bf Q}_h\vv_\tau $.
\begin{lemma}\label{lem:nonneg}
For $\mathbf{Q}_h$, $\mathbf{q}_h$, ${\bf Z}_h$ and ${\bf Z}_h({\bf U}_h(q_h))$ defined as above, there holds
\begin{equation*}
\gamma \vv {\bf Q}_h-{\bf q}_h\vv_\tau ^2
  \le  [{\bf q}_h-{\bf Q}_h ,{\bf Z}_h({\bf U}_h(q_h))- {\bf z}_h]_\tau.
\end{equation*}
\end{lemma}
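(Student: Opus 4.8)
The plan is to exploit the two variational inequalities---the continuous (semidiscrete) one \eqref{eqn:fde3-semi} in the form \eqref{semi-contr-eqv}, and the fully discrete one \eqref{eqn:fde3-fully} in the form \eqref{fully-contr-eqv}---and combine them by choosing suitable test functions so that the cross terms telescope into the claimed right-hand side. This is the standard energy-type argument for deriving an a priori bound on the control error in optimal control, adapted here to the discrete $[\cdot,\cdot]_\tau$ inner product.

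\textbf{Key steps.} First I would test the semidiscrete inequality \eqref{eqn:fde3-semi} with $v=Q_h^{n-1}$ (which lies in $\Uad$ pointwise) and the fully discrete inequality \eqref{eqn:fde3-fully} with $v=q_h(t_{n-1})$, both admissible. Writing these in the discrete inner product and summing over $n$, I obtain
\begin{align*}
 [\gamma {\bf q}_h + {\bf z}_h,\, {\bf Q}_h - {\bf q}_h]_\tau &\ge 0, \\
 [\gamma {\bf Q}_h + {\bf Z}_h,\, {\bf q}_h - {\bf Q}_h]_\tau &\ge 0.
\end{align*}
Adding the two inequalities, the pure $\gamma$-terms combine to $-\gamma\vv{\bf Q}_h-{\bf q}_h\vv_\tau^2$, giving
\begin{equation*}
 \gamma\vv{\bf Q}_h-{\bf q}_h\vv_\tau^2 \le [{\bf z}_h - {\bf Z}_h,\, {\bf Q}_h - {\bf q}_h]_\tau = [{\bf q}_h - {\bf Q}_h,\, {\bf Z}_h - {\bf z}_h]_\tau.
\end{equation*}
Next I would replace ${\bf Z}_h$ by the auxiliary adjoint ${\bf Z}_h({\bf U}_h(q_h))$ through the identity $[{\bf q}_h-{\bf Q}_h,\,{\bf Z}_h-{\bf z}_h]_\tau = [{\bf q}_h-{\bf Q}_h,\,{\bf Z}_h({\bf U}_h(q_h))-{\bf z}_h]_\tau + [{\bf q}_h-{\bf Q}_h,\,{\bf Z}_h-{\bf Z}_h({\bf U}_h(q_h))]_\tau$, and show the last term is nonpositive.

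\textbf{The main obstacle} is precisely showing that the bracket $[{\bf q}_h-{\bf Q}_h,\,{\bf Z}_h-{\bf Z}_h({\bf U}_h(q_h))]_\tau \le 0$, which is what lets one discard it. The idea is that ${\bf Z}_h$ solves the discrete adjoint \eqref{eqn:fde2-fully} with source ${\bf U}_h$, while ${\bf Z}_h({\bf U}_h(q_h))$ solves \eqref{Eq-Zh-vh} with source ${\bf U}_h(q_h)$; their difference therefore solves the discrete adjoint equation driven by ${\bf U}_h - {\bf U}_h(q_h)$. By definition \eqref{eqn:Uh_q} of ${\bf U}_h(q_h)$ and \eqref{eqn:fde1-fully} for ${\bf U}_h$, the difference ${\bf U}_h - {\bf U}_h(q_h)$ in turn solves the discrete state equation with source $P_h({\bf Q}_h - {\bf q}_h)$. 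Using the discrete integration-by-parts identity \eqref{eqn:L1-adjoint} to move $\bPtau$ across onto the adjoint variable, together with the discrete maximal regularity and the coercivity/accretivity of the operator $\bPtau - \Delta_h$, I would rewrite the bracket as a quadratic form in ${\bf Q}_h-{\bf q}_h$ with the correct sign, so the term is indeed nonpositive. The delicate point is matching the staggered indexing (states at $t_n$, controls and adjoints at $t_{n-1}$) correctly when applying \eqref{eqn:L1-adjoint}, and confirming that the self-adjointness of $-\Delta_h$ and the $L^2$-stability of $P_h$ give the required definiteness.
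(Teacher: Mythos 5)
Your proposal is correct and follows essentially the same route as the paper's proof: use the pointwise projection forms \eqref{semi-contr-eqv} and \eqref{fully-contr-eqv} with cross test functions to get $\gamma\vv{\bf Q}_h-{\bf q}_h\vv_\tau^2\le[{\bf q}_h-{\bf Q}_h,{\bf Z}_h-{\bf z}_h]_\tau$, insert the auxiliary adjoint ${\bf Z}_h({\bf U}_h(q_h))$, and discard the remaining bracket by substituting the discrete state equation and applying the summation-by-parts identity \eqref{eqn:L1-adjoint}. One small correction: that mechanism yields $[{\bf q}_h-{\bf Q}_h,{\bf Z}_h-{\bf Z}_h({\bf U}_h(q_h))]_\tau=-\vv{\bf U}_h(q_h)-{\bf U}_h\vv_\tau^2\le 0$, i.e., a negative square of the \emph{state} difference rather than a quadratic form in ${\bf Q}_h-{\bf q}_h$, and no discrete maximal regularity or coercivity is needed---only \eqref{eqn:L1-adjoint}, the self-adjointness of $\Delta_h$, and the fact that the adjoint difference is driven exactly by ${\bf U}_h-{\bf U}_h(q_h)$.
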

\begin{proof}
It follows from  \eqref{eqn:fde1-fully} and \eqref{eqn:Uh_q}, similarly from \eqref{eqn:fde2-fully} and \eqref{Eq-Zh-vh}, that
\begin{equation*}
   (\bPtau -\Delta_h) ({\bf U}_h(q_h)-{\bf U}_h) =  {\bf q}_h- {\bf Q}_h\quad\mbox{and}\quad
   (\bar\partial_\tau^\alpha -\Delta_h) ({\bf Z}_h({\bf v}_h)-{\bf Z}_h) = {\bf v}_h - {\bf U}_h.
\end{equation*}
Together with \eqref{eqn:L1-adjoint}, these identities imply
\begin{align}
[{\bf q}_h-{\bf Q}_h , {\bf Z}_h-{\bf Z}_h({\bf U}_h(q_h))]_\tau
&=\big[(\bPtau - \Delta_h)({\bf U}_h (q_h)-{\bf U}_h)  , {\bf Z}_h-{\bf Z}_h({\bf U}_h(q_h))\big]_\tau\nonumber \\
&= \big[{\bf U}_h (q_h)-{\bf U}_h  , (\bar\partial_\tau^\alpha - \Delta_h)({\bf Z}_h-{\bf Z}_h({\bf U}_h(q_h)))\big]_\tau\nonumber \\
&= - \vv {\bf U}_h (q_h)-{\bf U}_h \vv_\tau^2\leq0.\label{eqn:nonneg}
\end{align}
Next, since \eqref{semi-contr-eqv} holds pointwise in time, i.e.,
$ q_h(t_{n-1}) = P_{U_\mathrm{ad}}(-\gamma^{-1}z_h(t_{n-1}))$, we have
\begin{equation}\label{contr-qhtn-1}
  (q_h(t_{n-1})+\gamma^{-1}z_h(t_{n-1}),\chi-q_h(t_{n-1}))\geq 0,\quad \forall\, \chi\in L^2(\Omega) \,\,\, \mbox{s.t.}\,\,\, a\leq \chi \leq b.
\end{equation}
Upon setting $v=q_h(t_{n-1})$ in \eqref{eqn:fde3-fully} and $\chi=Q_h^{n-1}$ in \eqref{contr-qhtn-1}, we deduce
\begin{align*}
  \gamma \vv {\bf Q}_h-{\bf q}_h\vv_\tau ^2 &
  =\gamma[{\bf Q}_h -{\bf q}_h, {\bf Q}_h]_\tau  - \gamma[{\bf Q}_h -{\bf q}_h, {\bf q}_h]_\tau\\
  &\le [{\bf q}_h-{\bf Q}_h , {\bf Z}_h]_\tau - [{\bf q}_h-{\bf Q}_h , {\bf z}_h]_\tau \\
  &= [{\bf q}_h-{\bf Q}_h , {\bf Z}_h-{\bf Z}_h({\bf U}_h(q_h))]_\tau + [{\bf q}_h-{\bf Q}_h ,{\bf Z}_h({\bf U}_h(q_h))- {\bf z}_h]_\tau.
\end{align*}
Now invoking \eqref{eqn:nonneg} completes the proof of the lemma.
\end{proof}

The next result gives an error estimate for the approximate state $\mathbf{U}_h(q_h)$.
\begin{lemma}\label{lem:fully-v3}
Let $f,u_d\in H^1(0,T;L^2(\Omega))$. For any $\epsilon\in(0,\min(1/2,\alpha))$, there holds
\begin{equation*}
  \vv {\bf U}_h(q_h) - {\bf u}_h\vv_\tau
 \leq c\tau^{{1}/{2}+\min({1}/{2},\alpha-\epsilon)} .
\end{equation*}\vspace{-10pt}
\end{lemma}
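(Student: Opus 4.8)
The plan is to peel off the temporal discretization error by inserting the \emph{consistent} fully discrete solution. Write $g=f+q_h$, so that $u_h$ is exactly the semidiscrete solution of \eqref{eqn:fdesemidis} with source $g$ (cf.\ \eqref{eqn:fde1-semi}), and let $\widehat U_h^n\in X_h$ be the solution of \eqref{eqn:fully-variant} driven by the pointwise source $P_hg(t_n)=f_h^n+P_hq_h(t_n)$. I would then split
\[
U_h^n(q_h)-u_h(t_n)=\big(U_h^n(q_h)-\widehat U_h^n\big)+\big(\widehat U_h^n-u_h(t_n)\big),
\]
and bound both groups in $\vv\cdot\vv_\tau=\|\cdot\|_{\ell^2(L^2\II)}$. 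The governing regularity is that of $g$: since $f,u_d\in H^1(0,T;L^2\II)$, Lemma \ref{lem:reg-dis} (with $s=\tfrac12-\epsilon$) gives $q_h\in H^\sigma(0,T;L^2\II)$, and hence $g\in H^\sigma(0,T;L^2\II)$, with $\sigma:=\min(1,\alpha+\tfrac12-\epsilon)=\tfrac12+\min(\tfrac12,\alpha-\epsilon)$ -- precisely the target exponent.

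For the second group I would invoke the direct-problem error estimates of Section \ref{ssec:error} applied to $g$. When $\sigma<1$ (i.e.\ $\alpha-\epsilon<\tfrac12$) one has $\sigma=\alpha+\tfrac12-\epsilon\in(\tfrac12,1)$, and Theorem \ref{thm:err-time2} with $p=2$, $s=\sigma$ yields $\vv(\widehat U_h^n-u_h(t_n))_n\vv_\tau\le c\tau^{\min(1/2+\alpha,\sigma)}=c\tau^{\sigma}\|g\|_{H^\sigma}$. When $\sigma=1$ (i.e.\ $\alpha\ge\tfrac12+\epsilon$, so $q_h,g\in H^1$) Theorem \ref{thm:err-time2} does not reach the endpoint $s=1$, so I would instead repeat its proof at $s=1$: split $g=g(0)+(g-g(0))$, handle the time-constant part by Lemma \ref{lem:fully-variant} with $g'\equiv0$ -- where $\tau\sum_n\|\tau t_n^{\alpha-1}g(0)\|^2\le c\tau^2\|g(0)\|^2$ because $\tau^{2\alpha-1}\sum_n n^{2\alpha-2}$ stays bounded for $\alpha>\tfrac12$ -- and treat the compatible remainder $g-g(0)\in H^1_L$ by Theorem \ref{thm:err-time} at $s=1$ together with Lemma \ref{lem:L-interp} to pass from the time-averaged to the pointwise solution. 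Both contributions are $O(\tau)$, so in all cases the second group is $O(\tau^{\sigma})$.

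For the first group, $D^n:=U_h^n(q_h)-\widehat U_h^n$ solves the scheme \eqref{eqn:fully-variant} with source $P_h\big(q_h(t_{n-1})-q_h(t_n)\big)$ and $D^0=0$ (the $f_h^n$ cancel). Discrete maximal $\ell^2$-regularity (Lemma \ref{lem:max-reg}), the bound $\vv D\vv_\tau\le c\vv(\Delta_hD^n)_n\vv_\tau$ from the uniform coercivity of $-\Delta_h$, and the $L^2\II$-stability of $P_h$ then give
\[
\vv D\vv_\tau\le c\,\vv\big(q_h(t_n)-q_h(t_{n-1})\big)_{n=1}^N\vv_\tau.
\]
To estimate this one-step difference sharply I would write $q_h(t_n)-q_h(t_{n-1})=(q_h(t_n)-\bar q_h^{\,n})-(q_h(t_{n-1})-\bar q_h^{\,n})$ with $\bar q_h^{\,n}=\tau^{-1}\int_{t_{n-1}}^{t_n}q_h$, and apply Lemma \ref{lem:L-interp} (and its left-endpoint twin, proved identically) with $p=2$, $s=\sigma\in(\tfrac12,1]$ to each piece, obtaining $\vv(q_h(t_n)-q_h(t_{n-1}))_n\vv_\tau\le c\tau^{\sigma}\|q_h\|_{H^\sigma}\le c\tau^{\sigma}$. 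Combining the two groups gives the asserted $\tau^{\sigma}=\tau^{1/2+\min(1/2,\alpha-\epsilon)}$ bound.

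The main obstacle I anticipate is exactly this one-step difference estimate. The naive route through the Sobolev embedding $H^{\sigma}\hookrightarrow C^{0,\sigma-1/2}$ only yields $\tau^{\sigma-1/2}$, which is half an order too weak and would ruin the result for $\alpha<\tfrac12$; the missing order is recovered by measuring the differences in the averaged $\ell^2$-in-time sense through Lemma \ref{lem:L-interp} rather than pointwise. Getting this reduction right, and matching the endpoint $\sigma=1$ (where Theorem \ref{thm:err-time2} must be replaced by the $g(0)$-plus-compatible-remainder splitting), is where the care is concentrated.
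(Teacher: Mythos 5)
Your proposal is correct and follows essentially the same route as the paper's proof: the same decomposition through the fully discrete solution driven by the pointwise source $P_h(f(t_n)+q_h(t_n))$ (the paper's $\widetilde{\mathbf U}_h(q_h)$), discrete maximal $\ell^2$-regularity (Lemma \ref{lem:max-reg}) combined with Lemma \ref{lem:L-interp} for the one-step-difference term, and Theorem \ref{thm:err-time2} with $s=\min(1,\tfrac12+\alpha-\epsilon)$ together with Lemma \ref{lem:reg-dis} for the consistency term. Your explicit handling of the endpoint $s=1$ (when $\alpha-\epsilon\ge\tfrac12$) via the splitting $g=g(0)+(g-g(0))$ is exactly the content of Remark \ref{rmk:err}, which the paper invokes implicitly at that point, so it fills in a detail rather than changing the argument.
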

\begin{proof}
By the triangle inequality, we have
\begin{equation*}
  \vv {\bf U}_h(q_h) - {\bf u}_h\vv_\tau \leq \vv {\bf U}_h(q_h)-\widetilde{\bf U}_h(q_h)\vv_\tau + \vv \widetilde {\bf U}_h(q_h)-{\bf u}_h\vv_\tau,
\end{equation*}
where $\mathbf{\widetilde U}_h(q_h)=(\widetilde{U}_h^n(q_h))_{n=1}^N$ is the solution to
\begin{equation}\label{Eq-Uh-qh}
  \bPtau \widetilde{U}_h^n(q_h)-\Delta_h^n\widetilde U_h^n(q_h)  = f_h^n + \tilde q_h^n, \quad n =1,2,\ldots,N\quad \mbox{with } \widetilde U_h^0(q_h)=0 ,
\end{equation}
with $\tilde q_h^n=P_hq_h(t_n)$ (and $\tilde{\bf q}_h=(\tilde q_h^n)_{n=1}^N$). That is, $\widetilde U_h^n(q_h)$
is the fully discrete solution of problem \eqref{eqn:fem} with
$g=f+q_h$. By Lemmas \ref{lem:max-reg} and \ref{lem:L-interp}, we have
\begin{equation}\label{Uhqh-tildeUhqh}
  \vv {\bf U}_h(q_h)-\widetilde {\bf U}_h(q_h)\vv_\tau\leq c\|{\bf q}_h-\tilde {\bf q}_h\vv_\tau \leq c\tau^{\min({1}/{2}+\alpha-\epsilon,1)}\|q_h\|_{H^{{1}/{2}+\alpha-\epsilon}(0,T;L^2(\Omega))}.
\end{equation}
Further, Theorem \ref{thm:err-time2} (with $s=\min(1,{1}/{2}+\alpha-\epsilon)\in(1/2,1)$) implies
\begin{equation*}
\begin{split}
 \vv \widetilde{\bf U}_h(q_h) - {\bf u}_h\vv_\tau
 &\le c\| P_hf+q_h \|_{ H^s(0,T;L^2(\Omega))} \tau^{s}\\
 &\leq c(\|P_hf\|_{H^s(0,T;L^2(\Omega))}+\|q_h \|_{H^s(0,T;L^2(\Omega))}) \tau^s .
\end{split}
\end{equation*}
The last two estimates and Lemma \ref{lem:reg-dis} (with $s=1/2-\epsilon$) yield the desired assertion.
\end{proof}

Now we can give an $\ell^2(L^2(\Omega))$ error estimate for the approximation $(U_h^n,Z_h^n,Q_h^n)$.
\begin{theorem}\label{thm:full}
For $f\in  H^1 (0,T;L^2\II)$ and $u_d\in  H^1(0,T;L^2(\Omega))$,
$(u_h,z_h,q_h)$ and $(U_h^n,Z_h^n,Q_h^n)$ be the solutions of problems \eqref{eqn:fde1-semi}-\eqref{eqn:fde3-semi}
and \eqref{eqn:fde1-fully}-\eqref{eqn:fde3-fully}, respectively. Then there holds for any small $\epsilon>0$
\begin{equation*}
  \vv {\bf u}_h- {\bf U}_h  \vv_\tau + \vv {\bf z}_h- {\bf Z}_h  \vv_\tau + \vv {\bf q}_h- {\bf Q}_h  \vv_\tau
  \le c \tau ^{1/2+\min(1/2,\alpha-\epsilon)} ,
\end{equation*}
where the constant $c$ depends on $\|f\|_{H^1(0,T;L^2(\Omega))}$
and $\|u_d\|_{H^1(0,T;L^2(\Omega))}$.
\end{theorem}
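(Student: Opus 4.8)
The plan is to first bound the control error $\vv {\bf q}_h - {\bf Q}_h\vv_\tau$ using the variational structure captured in Lemma \ref{lem:nonneg}, and then to transfer this bound to the state and adjoint errors via the discrete maximal $\ell^2$-regularity. Starting from Lemma \ref{lem:nonneg} and applying the Cauchy--Schwarz inequality with respect to $[\cdot,\cdot]_\tau$, I would obtain
\begin{equation*}
  \gamma \vv {\bf Q}_h - {\bf q}_h\vv_\tau^2 \le \vv {\bf q}_h - {\bf Q}_h\vv_\tau \, \vv {\bf Z}_h({\bf U}_h(q_h)) - {\bf z}_h\vv_\tau,
\end{equation*}
so that, after cancelling one factor (the estimate being trivial when $\vv {\bf Q}_h - {\bf q}_h\vv_\tau=0$),
\begin{equation*}
  \gamma \vv {\bf Q}_h - {\bf q}_h\vv_\tau \le \vv {\bf Z}_h({\bf U}_h(q_h)) - {\bf z}_h\vv_\tau .
\end{equation*}
Thus the whole argument reduces to estimating $\vv {\bf Z}_h({\bf U}_h(q_h)) - {\bf z}_h\vv_\tau$, the error between the fully discrete adjoint driven by the approximate state ${\bf U}_h(q_h)$ and the nodal values of the semidiscrete adjoint $z_h$.

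Next I would insert the intermediate quantity ${\bf Z}_h({\bf u}_h)$, the fully discrete adjoint driven by the semidiscrete state values ${\bf u}_h=(u_h(t_n))_{n=1}^N$, and split
\begin{equation*}
  \vv {\bf Z}_h({\bf U}_h(q_h)) - {\bf z}_h\vv_\tau \le \vv {\bf Z}_h({\bf U}_h(q_h)) - {\bf Z}_h({\bf u}_h)\vv_\tau + \vv {\bf Z}_h({\bf u}_h) - {\bf z}_h\vv_\tau .
\end{equation*}
For the first term, ${\bf Z}_h({\bf U}_h(q_h)) - {\bf Z}_h({\bf u}_h)$ solves the discrete adjoint equation with source ${\bf U}_h(q_h) - {\bf u}_h$; applying the adjoint analogue of the discrete maximal $\ell^2$-regularity of Lemma \ref{lem:max-reg} (valid by the duality \eqref{eqn:L1-adjoint}) together with Lemma \ref{lem:fully-v3} gives
\begin{equation*}
  \vv {\bf Z}_h({\bf U}_h(q_h)) - {\bf Z}_h({\bf u}_h)\vv_\tau \le c \vv {\bf U}_h(q_h) - {\bf u}_h\vv_\tau \le c\tau^{1/2+\min(1/2,\alpha-\epsilon)} .
\end{equation*}
The second term $\vv {\bf Z}_h({\bf u}_h) - {\bf z}_h\vv_\tau$ is precisely the time-discretization error for the adjoint problem \eqref{eqn:fde2-semi} with source $P_h(u_h-u_d)$, so I would invoke the adjoint analogue of Theorem \ref{thm:err-time2} with $p=2$.

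The regularity needed for that hypothesis follows from Lemma \ref{lem:reg-dis} (with $s=1/2-\epsilon$), which yields $u_h\in H^{\alpha+1/2-\epsilon}(0,T;L^2\II)$ and hence $u_h-u_d\in H^{\min(\alpha+1/2-\epsilon,1)}(0,T;L^2\II)$ with regularity index exceeding the threshold $1/2$; the resulting rate is $\tau^{\min(1/2+\alpha-\epsilon,1)}=\tau^{1/2+\min(1/2,\alpha-\epsilon)}$. Combining the two terms bounds the control error by $c\tau^{1/2+\min(1/2,\alpha-\epsilon)}$. For the state error I would write $\vv {\bf u}_h - {\bf U}_h\vv_\tau \le \vv {\bf u}_h - {\bf U}_h(q_h)\vv_\tau + \vv {\bf U}_h(q_h) - {\bf U}_h\vv_\tau$, bounding the first summand by Lemma \ref{lem:fully-v3} and the second by maximal $\ell^2$-regularity applied to ${\bf U}_h(q_h)-{\bf U}_h$, whose source is ${\bf q}_h-{\bf Q}_h$. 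Similarly, $\vv {\bf z}_h - {\bf Z}_h\vv_\tau$ splits through ${\bf Z}_h({\bf u}_h)$ into the adjoint time-discretization error just estimated and a term controlled by maximal $\ell^2$-regularity with source ${\bf u}_h-{\bf U}_h$. I expect the main obstacle to be the clean verification that the adjoint source $u_h-u_d$ has temporal regularity above the threshold $1/p=1/2$ so that Theorem \ref{thm:err-time2} applies, which hinges on the sharp smoothing exponent $\alpha+1/2-\epsilon$ furnished by Lemma \ref{lem:reg-dis}; the remainder is a bookkeeping assembly of the quoted estimates.
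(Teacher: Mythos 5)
Your proposal is correct and follows essentially the same route as the paper's proof: control error via Lemma \ref{lem:nonneg} plus Cauchy--Schwarz, split through the intermediate adjoint ${\bf Z}_h({\bf u}_h)$, with the two pieces handled by discrete maximal $\ell^2$-regularity (Lemma \ref{lem:max-reg}) combined with Lemma \ref{lem:fully-v3}, and by Theorem \ref{thm:err-time2} with regularity supplied by Lemma \ref{lem:reg-dis}; the state and adjoint errors are then assembled by the same triangle-inequality decompositions through ${\bf U}_h(q_h)$ and ${\bf Z}_h({\bf u}_h)$. The only cosmetic difference is that you make explicit the Cauchy--Schwarz cancellation and the duality justification for applying maximal regularity to the backward scheme, which the paper leaves implicit.
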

\begin{proof}
By Lemma \ref{lem:nonneg} and the triangle inequality, we deduce
\begin{equation*}
\begin{split}
 \vv {\bf Q}_h-{\bf q}_h\vv_\tau
  & \le c \vv {\bf Z}_h({\bf U}_h(q_h))- {\bf Z}_h(\mathbf{ u}_h)\vv_\tau+ c \vv {\bf Z}_h(\mathbf{u}_h)- {\bf z}_h\vv_\tau.
\end{split}
\end{equation*}
It suffices to bound the two terms on the right hand side.
Lemmas \ref{lem:max-reg} and \ref{lem:fully-v3} imply
\begin{equation*}
  \vv {\bf Z}_h({\bf U}_h(q_h))- {\bf Z}_h(\mathbf{ u}_h)\vv_\tau
  \leq c\vv {\bf U}_h(q_h) - {\bf u}_h \vv_\tau
   \le c\tau^{r}.
\end{equation*}
with $r={1}/{2}+\min({1}/{2},\alpha-\epsilon)$.
Further, since $\mathbf{Z}_h(\mathbf{u}_h)$ is a fully discrete approximation to $z_h(u_h)$, by
Theorem \ref{thm:err-time2} (with $s=r$) and Lemma \ref{lem:reg-dis}, we have
\begin{equation*}
\vv {\bf Z}_h({\bf u}_h)- {\bf z}_h\vv_\tau\le c\| u_h-P_hu_d \|_{H^r (0,T;L^2(\Omega))}  \tau^r \leq c\tau ^r.
\end{equation*}
Thus, we obtain the estimate $\vv {\bf Q}_h-{\bf q}_h\vv_\tau\le c \tau^r.$
Next, by Lemmas \ref{lem:max-reg} and \ref{lem:fully-v3}, we deduce
\begin{equation*}
\begin{split}
     \vv  {\bf U}_h - {\bf u}_h \vv_\tau
     &\le \vv {\bf U}_h -{\bf U}_h ({\bf q}_h)\vv_\tau+\vv {\bf U}_h ({\bf q}_h) - {\bf u}_h \vv_\tau \\
     &\le c\vv {\bf Q}_h - {\bf q}_h \vv_\tau+\vv {\bf U}_h ({\bf q}_h) - {\bf u}_h \vv_\tau
     \le c \tau^r.
\end{split}
\end{equation*}
Similarly, $\vv  {\bf Z}_h - {\bf z}_h \vv_\tau$ can be bounded by
\begin{equation*}
\begin{split}
     \vv  {\bf Z}_h - {\bf z}_h \vv_\tau
     &\le \vv {\bf Z}_h -{\bf Z}_h ({\bf u}_h)\vv_\tau+\vv {\bf Z}_h ({\bf u}_h) - {\bf z}_h \vv_\tau \\
     &\le c\vv {\bf U}_h - {\bf u}_h \vv_\tau
     +\vv {\bf Z}_h ({\bf u}_h) - {\bf z}_h \vv_\tau\le c\tau^r.
\end{split}
\end{equation*}
This completes the proof of Theorem \ref{thm:full}.
\end{proof}

Last, we give a pointwise-in-time error estimate for the approximation $(U_h^n,Q_h^n,Z_h^n)$.
\begin{theorem}\label{thm:full-Linfty}
For $f,u_d\in W^{1,p} (0,T;L^2\II)\cap H^1(0,T;L^2(\Omega))$, $p>1/\alpha$ with $\alpha\in(0,1/p')$,
let $(u_h,z_h,q_h)$ and $(U_h^n,Z_h^n,Q_h^n)$ be the solutions of problems \eqref{eqn:fde1-semi}-\eqref{eqn:fde3-semi}
and \eqref{eqn:fde1-fully}-\eqref{eqn:fde3-fully}, respectively. Then there holds for any small $\epsilon>0$
\begin{equation*}
\max_{1\le n\le N}
\big(\|u^n_h- U^n_h \|_{L^2(\Omega)} + \|z^{n-1}_h- Z^{n-1}_h \|_{L^2(\Omega)}
+ \|q^{n-1}_h- Q^{n-1}_h \|_{L^2(\Omega)} \big)  \le c \tau^{\alpha-\epsilon} ,
\end{equation*}
where the constant $c$ depends on $\|f\|_{W^{1,\min(p,2)}(0,T;L^2(\Omega))}$ and $\|u_d\|_{W^{1,\min(p,2)}(0,T;L^2(\Omega))}$.
\end{theorem}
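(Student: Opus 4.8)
The plan is to mirror the $\ell^2(L^2(\Omega))$ argument of Theorem~\ref{thm:full}, but to carry every estimate in the $\ell^\infty(L^2(\Omega))$ norm, splitting each error into a \emph{consistency} part (pure time discretization of the direct or adjoint equation with a fixed, smooth source) and a \emph{stability} part (propagation of the control error through the discrete equations). First I would reduce everything to the adjoint error: since $q_h(t_{n-1})=P_{U_{\rm ad}}(-\gamma^{-1}z_h(t_{n-1}))$ and $Q_h^{n-1}=P_{U_{\rm ad}}(-\gamma^{-1}Z_h^{n-1})$ by \eqref{semi-contr-eqv} and \eqref{fully-contr-eqv}, the pointwise contractivity of $P_{U_{\rm ad}}$ gives $\|q_h(t_{n-1})-Q_h^{n-1}\|_{L^2(\Omega)}\le\gamma^{-1}\|z_h(t_{n-1})-Z_h^{n-1}\|_{L^2(\Omega)}$ for every $n$, so it remains to control $\max_n\|z_h(t_{n-1})-Z_h^{n-1}\|_{L^2(\Omega)}$ and $\max_n\|u_h(t_n)-U_h^n\|_{L^2(\Omega)}$.

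Next I would use the two splittings of Theorem~\ref{thm:full}, $u_h(t_n)-U_h^n=(u_h(t_n)-U_h^n(q_h))+\vartheta^n$ with $\vartheta^n:=U_h^n(q_h)-U_h^n$, and $z_h(t_{n-1})-Z_h^{n-1}=(z_h(t_{n-1})-Z_h^{n-1}(\mathbf u_h))+w^{n-1}$ with $w^{n-1}:=Z_h^{n-1}(\mathbf u_h)-Z_h^{n-1}$, where $\mathbf U_h(q_h)$ and $\mathbf Z_h(\mathbf u_h)$ solve \eqref{eqn:Uh_q} and \eqref{Eq-Zh-vh}. The consistency terms are time-discretization errors with sources $f+q_h$ and $u_h-P_hu_d$. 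For these I would establish the $\ell^\infty$ analogue of Lemma~\ref{lem:fully-v3}: by Lemma~\ref{lem:reg-dis} one has $q_h\in W^{1/p+\alpha-\epsilon,p}(0,T;L^2(\Omega))$, and by \eqref{eqn:stability-semi} the state $u_h$ inherits the same regularity, so each source lies in $W^{1/p+\alpha-\epsilon,p}$; applying \eqref{l2L2Uhn-0} with $s=1/p+\alpha-\epsilon$ and then the inverse inequality in time (exactly as \eqref{l2L2Uhn-00} is deduced) yields the $\ell^\infty(L^2(\Omega))$ bound $c\tau^{\alpha-\epsilon}$ for both consistency terms, the staggered evaluation $q_h(t_{n-1})$ versus $q_h(t_n)$ contributing only an additional $\tau^{\alpha-\epsilon}$ through the Hölder embedding $W^{1/p+\alpha-\epsilon,p}\hookrightarrow C^{0,\alpha-\epsilon}$. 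This is where the hypotheses $p>1/\alpha$, $\alpha\in(0,1/p')$ are needed.

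The core of the proof is the coupled stability pair $(\vartheta,w)$, which by \eqref{eqn:Uh_q}, \eqref{eqn:fde1-fully}, \eqref{Eq-Zh-vh} and \eqref{eqn:fde2-fully} satisfies
\begin{equation*}
(\bPtau-\Delta_h)\vartheta^n=q_h(t_{n-1})-Q_h^{n-1},\qquad
(\bar\partial_\tau^\alpha-\Delta_h)w^{n-1}=u_h(t_n)-U_h^n,
\end{equation*}
with $\vartheta^0=0$ and $w^N=0$. Here I would run the bootstrapping argument of Theorem~\ref{thm:err-space-inf}, but now in the temporal variable: discrete maximal $\ell^p$-regularity (Lemma~\ref{lem:max-reg}) gives $\|(\bPtau\vartheta^n)_n\|_{\ell^p(L^2(\Omega))}\le c\|(q_h(t_{n-1})-Q_h^{n-1})_n\|_{\ell^p(L^2(\Omega))}\le c\|(z_h(t_{n-1})-Z_h^{n-1})_n\|_{\ell^p(L^2(\Omega))}$ (contractivity again), and likewise $\|(\bar\partial_\tau^\alpha w^{n-1})_n\|_{\ell^p(L^2(\Omega))}\le c\|(u_h(t_n)-U_h^n)_n\|_{\ell^p(L^2(\Omega))}$, while a discrete Sobolev embedding $\|\vartheta\|_{\ell^{p_\alpha}(L^2(\Omega))}\le c\|\bPtau\vartheta\|_{\ell^p(L^2(\Omega))}$ (the discrete analogue of $W^{\alpha,p}\hookrightarrow L^{p_\alpha}$ used in \eqref{eqn:est-vartheta-2}, valid since $\vartheta^0=0$, with $p_\alpha=p/(1-p\alpha)$ when $p\alpha<1$ and $p_\alpha=\infty$ otherwise) raises integrability by a fixed amount at each pass, and symmetrically for $w$. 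Seeding the iteration with the $\ell^2(L^2(\Omega))$ bound $\tau^{1/2+\min(1/2,\alpha-\epsilon)}$ of Theorem~\ref{thm:full} and feeding in the $\tau^{\alpha-\epsilon}$ consistency estimates, a finite number of passes through the coupled system boosts the error from $\ell^2$ to $\ell^\infty$ while preserving the rate $\tau^{\alpha-\epsilon}$; returning to the control via contractivity then closes the bound. The constant depends on $\|f\|_{W^{1,\min(p,2)}(0,T;L^2(\Omega))}$ and $\|u_d\|_{W^{1,\min(p,2)}(0,T;L^2(\Omega))}$ because the $\ell^2$ seed uses $H^1=W^{1,2}$ whereas the $\ell^\infty$ consistency step uses $W^{1,p}$.

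I expect the main obstacle to be the uniform-in-$\tau$ discrete Sobolev embedding $\|\vartheta\|_{\ell^{p_\alpha}}\le c\|\bPtau\vartheta\|_{\ell^p}$ together with the simultaneous closing of the coupled $\vartheta$–$w$ estimates. One must verify that the embedding constant stays bounded as the integrability index marches toward $\infty$, that only finitely many passes are required (governed by $\alpha$ and the gap $p_\alpha-p$), and that the $\tau^{\alpha-\epsilon}$ rate is genuinely transported through each pass rather than degraded. Because the state and adjoint stability errors drive one another, the two discrete evolution estimates cannot be decoupled and must be iterated together — the exact fully discrete counterpart of the repeated application of \eqref{eqn:est-vartheta-2}, but for the state–adjoint pair rather than a single variable.
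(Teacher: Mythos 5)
Your reduction to the adjoint and control errors via the contractivity of $P_{\Uad}$, your two splittings through $\mathbf{U}_h(q_h)$ and $\mathbf{Z}_h(\mathbf{u}_h)$, and your treatment of the consistency terms (apply \eqref{l2L2Uhn-0} with $s=1/p+\alpha-\epsilon$ followed by the inverse inequality in time, exactly as \eqref{l2L2Uhn-00} is deduced, and control the staggered evaluation $q_h(t_{n-1})$ versus $q_h(t_n)$ through the regularity of $q_h$ from Lemma \ref{lem:reg-dis}) all coincide with the paper's proof. The divergence is in the stability step, and there your argument has a concrete gap. The paper does \emph{not} iterate: it writes the equation for $U_h^n-U_h^n(q_h)$ with right-hand side $Q_h^{n-1}-q_h(t_{n-1})$, passes from $\ell^2$ to $\ell^{p_1}$ by the inverse inequality in time at the cost of a factor $\tau^{\min(0,1/p_1-1/2)}$, applies Lemma \ref{lem:max-reg} once, and then applies the endpoint discrete embedding $\|\cdot\|_{\ell^\infty(L^2(\Omega))}\le c\|\bPtau\cdot\|_{\ell^{p_1}(L^2(\Omega))}$ once, with $p_1>1/\alpha$ chosen close to $1/\alpha$. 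The whole point is that the seed rate $\tau^{1/2+\min(1/2,\alpha-\epsilon)}$ from Theorem \ref{thm:full} carries a spare half-power of $\tau$ that exactly absorbs the inverse-inequality loss, since $\min(1/p_1,1/2)+\min(1/2,\alpha-\epsilon)\ge\alpha-\epsilon$. One pass suffices, the state, adjoint and control errors are then bounded sequentially rather than as a coupled pair, and the only embedding ever used is the endpoint one ($p_1>1/\alpha$) cited from \cite{JinLiZhou:nonlinear}.

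Your bootstrap instead requires, at every intermediate pass, a uniform-in-$\tau$ discrete fractional embedding $\|(v^n)_{n}\|_{\ell^{p_\alpha}(L^2(\Omega))}\le c\|(\bPtau v^n)_{n}\|_{\ell^{p}(L^2(\Omega))}$ with $p\alpha<1$ and $p_\alpha=p/(1-p\alpha)$ (together with its right-sided counterpart for $w$, since $\bar\partial_\tau^\alpha$ runs backward from $w^N=0$). This is precisely the tool you flag as the ``main obstacle,'' and it is nowhere established: the paper only cites the $\ell^\infty$ endpoint version for $p>1/\alpha$, and nothing in its toolkit supplies the intermediate-index version, let alone with constants uniform as the integrability index marches upward. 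As written, your stability argument therefore does not close. The gap is plausibly repairable (such embeddings should follow from the same generating-function techniques behind Lemma \ref{lem:max-reg}), but the cleaner repair is to note that the iteration is unnecessary: with the $\ell^2$ bound of Theorem \ref{thm:full} and your $\ell^\infty$ consistency bounds already in hand, the single inverse-inequality step above replaces the entire coupled $\vartheta$--$w$ cascade.
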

\begin{proof}
It follows from \eqref{eqn:fde1-fully} and \eqref{eqn:Uh_q} that $U_h^0-U_h^0(q_h) =0$ and
\begin{equation*}
  \bPtau (U_h^n-U_h^n(q_h)) -\Delta_h(U_h^n-U_h^n(q_h)) =  Q_h^{n-1}-q_h(t_{n-1}),\quad n=1,\ldots,N .
\end{equation*}
By Lemma \ref{lem:max-reg} and the inverse inequality (in time), we obtain for any $1/\alpha< p_1<\infty$
\begin{equation*}
\begin{aligned}
\|\bPtau (U_h^n-U_h^n(q_h))_{n=1}^N\|_{\ell^{p_1}(L^2(\Omega))}
&\le c \|(Q_h^{n-1}-q_h(t_{n-1}))_{n=1}^N\|_{\ell^{p_1}(L^2(\Omega))}  \\
&\le   c\tau^{\min(0,1/p_1-1/2)} \|(Q_h^{n-1}-q_h(t_{n-1}))_{n=1}^N\|_{\ell^2(L^2(\Omega))}.
\end{aligned}
\end{equation*}
This and Theorem \ref{thm:full} imply
\begin{equation*}
\|\bPtau (U_h^n-U_h^n(q_h))_{n=1}^N\|_{\ell^{p_1}(L^2(\Omega))}
\le  c\tau^{\min(1/p_1,1/2) +\min(1/2,\alpha-\epsilon)}.
\end{equation*}
By choosing $p_1>{1}/{\alpha}$ sufficiently close to ${1}/{\alpha}$ and
discrete embedding \cite{JinLiZhou:nonlinear},
\begin{equation*}
\begin{aligned}
\|(U_h^n-U_h^n(q_h))_{n=1}^N\|_{\ell^\infty(L^2(\Omega))}
&\le c\|\bPtau (U_h^n-U_h^n(q_h))_{n=1}^N\|_{\ell^{p_1}(L^2(\Omega))}\\
&\le c\tau^{\min(1/p_1,1/2)  +\min(1/2,\alpha-\epsilon)}
\le c\tau^{\alpha-\epsilon}  ,
\end{aligned}
\end{equation*}
where the last inequality follows from the inequality $\min(1/p_1,1/2)  +\min(1/2,\alpha-\epsilon)\ge \alpha-\epsilon$, due to the choice
of $p_1$. Further, by the definition of $\widetilde U_h^n(q_h)$ in  \eqref{Eq-Uh-qh}, choosing $p_2>{1}/{\alpha}$
sufficiently large so that $\alpha\in(0,{1}/{p_2'})$ and applying \eqref{l2L2Uhn-00} and Lemma \ref{lem:reg-dis}, we get
\begin{equation*}
\begin{aligned}
\|u_h(t_n)-\widetilde U_h^n(q_h)\|_{L^2(\Omega)}
\le &\,  c\tau^{\alpha-\epsilon} \|f+q_h\|_{W^{1/p_2+\alpha-\epsilon,p_2}(0,T;L^2(\Omega))}   \\
\le &\,   c\tau^{\alpha-\epsilon}(\|q_h\|_{W^{1/p_2+\alpha-\epsilon,p_2}(0,T;L^2(\Omega))} +c) \leq c\tau^{\alpha-\epsilon}.
\end{aligned}
\end{equation*}
Last, by choosing $p_3>1/\alpha$ sufficiently close to $1/\alpha$, Lemmas \ref{lem:max-reg}, \ref{lem:reg-dis},
and \ref{lem:L-interp}, and discrete embedding \cite{JinLiZhou:nonlinear}, we obtain
\begin{align*}
 \|(\widetilde U_h^n(q_h)-U_h^n(q_h))_{n=1}^N\|_{\ell^{\infty}(L^2(\Omega))}&\leq c \|\bPtau(\widetilde U_h^n(q_h)-U_h^n(q_h))_{n=1}^N\|_{\ell^{p_3}(L^2(\Omega))}\\
   &\leq c\|(q_h(t_{n-1})-q_h(t_n))_{n=1}^N\|_{\ell^{p_3}(L^2(\Omega))}\leq c\tau^{\alpha-\epsilon}.
\end{align*}
The last three estimates yield the desired bound on $\|u_h(t_n)-U_h^n\|_{L^2(\Omega)}$.
The bound on $\|z_h(t_{n-1})-Z_h^{n-1}\|_{L^2(\Omega)}$ follows similarly, and that
on $\|q_h(t_{n-1})-Q_h^{n-1}\|_{L^2(\Omega)}$ by the contraction property of $P_{\Uad}$.
\end{proof}

\section{Numerical results and discussions}\label{sec:numer}
Now we present numerical experiments to illustrate the theoretical findings. We perform experiments on the unit
interval $\Omega=(0,1)$. The domain $\Omega$ is divided into $M$ equally spaced subintervals with a mesh size
$h = 1/M$. To discretize the fractional derivatives $_0\partial_t^\alpha u$ and $_t\partial_T^\alpha z$, we fix the
time stepsize $\tau = T/N$. We present numerical results only for the fully discrete scheme by the Galerkin FEM
in space and the L1 scheme in time, since  BE-CQ gives nearly identical results.

We consider the following two examples to illustrate the analysis.
\begin{itemize}
\item[(a)] $f\equiv0$ and $u_d(x,t)=e^t x(1-x)$.
\item[(b)] $f=(1+\cos(t))\chi_{(1/2,1)}(x)$ and $u_d(x,t)=5e^{t}x(1-x)$.
\end{itemize}
Throughout, unless otherwise specified, the penalty parameter $\gamma$ is set to $\gamma=1$, and the lower and upper bounds $a$ and $b$
in the admissible set $\Uad$ to $a=0$ and $b=0.05$. The final time $T$ is fixed at $T=0.1$. The conditions from Theorems
\ref{thm:err-space-inf}, \ref{thm:full} and \ref{thm:full-Linfty} are satisfied for both examples, and thus the error estimates therein hold.

In Tables \ref{tab:a-spatial} and \ref{tab:b-spatial}, we present the spatial error $e_h(u)$ in the $L^\infty
(0,T;L^2(\Omega))$-norm for the semidiscrete solution $u_h$, defined by
\begin{equation*}
e_h(u) = \max_{1\leq n\leq N}\|u_h(t_n)-u(t_n)\|_{L^2(\Omega)},
\end{equation*}
and similarly for the approximations $z_h$ and $q_h$. The numbers in the bracket denote the theoretical rates. Since
the exact solution to problem \eqref{eqn:fde} is unavailable, we
compute reference solutions on a finer mesh, i.e., the continuous solution $u(t_n)$ with a fixed time step $\tau=T/1000$ and
mesh size $h=1/1280$. The empirical rate for the spatial error $e_h$ is of order $O(h^2)$, which is consistent with the theoretical result in
Theorem \ref{thm:err-space-inf}. For case (a), the box constraint is inactive, and thus the errors for the
control $q$ and adjoint $z$ are identical (since $\gamma=1$).

\begin{table}[hbt!]
\caption{Spatial errors for example (a) with $N=10^4$.}\label{tab:a-spatial}
\vspace{-.3cm}{\setlength{\tabcolsep}{7pt}
	\centering
	\begin{tabular}{|c|c|cccccc|c|}
		\hline
		$\alpha$ &$M$ &$10$ &$20$ & $40$ & $80$ & $160$ &$320$ &rate \\
		\hline
		     &   $e_h(u)$      & 4.57e-6 & 1.14e-6 & 2.86e-7 & 7.14e-8 & 1.79e-8 & 4.47e-8  & 2.00 (2.00)\\
		0.4  &   $e_h(q)$      & 3.38e-5 & 8.46e-6 & 2.12e-6 & 5.29e-7 & 1.32e-7 & 3.31e-8  & 2.00 (2.00)\\
		     &   $e_h(z)$      & 3.38e-5 & 8.46e-6 & 2.12e-6 & 5.29e-7 & 1.32e-7 & 3.31e-8  & 2.00 (2.00)\\
    \hline
		     &   $e_h(u)$      & 2.44e-6 & 6.07e-7 & 1.52e-7 & 3.79e-8 & 9.47e-9 & 2.37e-9  & 2.00 (2.00)\\
	 	0.6  &   $e_h(q)$      & 3.62e-5 & 9.04e-6 & 2.26e-6 & 5.65e-7 & 1.41e-7 & 3.53e-8  & 2.00 (2.00)\\
		     &   $e_h(z)$      & 3.62e-5 & 9.04e-6 & 2.26e-6 & 5.65e-7 & 1.41e-7 & 3.53e-8  & 2.00 (2.00)\\
	\hline
		     &   $e_h(u)$      & 8.93e-7 & 2.21e-7 & 5.52e-8 & 1.40e-8 & 3.45e-9 & 8.62e-10  & 2.00 (2.00)\\
		0.8  &   $e_h(q)$      & 3.92e-5 & 9.81e-6 & 2.45e-6 & 6.14e-7 & 1.53e-7 & 3.83e-8   & 2.00 (2.00)\\
		     &   $e_h(z)$      & 3.92e-5 & 9.81e-6 & 2.45e-6 & 6.14e-7 & 1.53e-7 & 3.83e-8   & 2.00 (2.00)\\
	\hline
   \end{tabular}}
\end{table}

\begin{table}[hbt!]
\caption{Temporal errors for example (a) with $M=50$.}\label{tab:a-l2}
\vspace{-.3cm}{\setlength{\tabcolsep}{7pt}
	\centering
	\begin{tabular}{|c|c|cccccc|c|}
		\hline
		$\alpha$ &$N$ &$1000$ &$2000$ & $4000$ & $8000$ & $16000$ &$32000$ &rate \\
		\hline
		     &   $e_{\tau,2}(u)$      & 1.70e-6 & 9.97e-7 & 5.77e-7 & 3.31e-7 & 1.88e-7 & 1.06e-7  & 0.83 (0.90)\\
		0.4  &   $e_{\tau,2}(q)$      & 2.02e-5 & 1.20e-5 & 7.06e-6 & 4.09e-6 & 2.34e-6 & 1.33e-6  & 0.82 (0.90)\\
		     &   $e_{\tau,2}(z)$      & 2.02e-5 & 1.20e-5 & 7.06e-6 & 4.09e-6 & 2.34e-6 & 1.33e-6  & 0.82 (0.90)\\
    \hline
		     &   $e_{\tau,2}(u)$      & 6.58e-7 & 3.47e-7 & 1.82e-7 & 9.47e-8 & 4.90e-8 & 2.53e-8  & 0.96 (1.00)\\
		 0.6 &   $e_{\tau,2}(q)$      & 8.25e-6 & 4.37e-6 & 2.29e-6 & 1.20e-6 & 6.22e-7 & 3.21e-7  & 0.95 (1.00)\\
		     &   $e_{\tau,2}(z)$      & 8.25e-6 & 4.37e-6 & 2.29e-6 & 1.20e-6 & 6.22e-7 & 3.21e-7  & 0.95 (1.00)\\
	\hline
		     &   $e_{\tau,2}(u)$      & 2.68e-7 & 1.38e-7 & 7.07e-8 & 3.62e-8 & 1.84e-8 & 9.38e-9  & 0.97 (1.00)\\
		0.8  &   $e_{\tau,2}(q)$      & 3.80e-6 & 1.95e-6 & 1.00e-6 & 5.12e-7 & 2.61e-7 & 1.33e-7  & 0.98 (1.00)\\
		     &   $e_{\tau,2}(z)$      & 3.80e-6 & 1.95e-6 & 1.00e-6 & 5.12e-7 & 2.61e-7 & 1.33e-7  & 0.98 (1.00)\\
	\hline
   \end{tabular}}
\end{table}

Next, to examine the convergence in time, we compute the $\ell^2(L^2(\Omega))$ and $\ell^\infty
(L^2(\Omega))$ temporal errors $e_{\tau,2}(u)$ and $e_{\tau,\infty}(u)$ for the fully discrete solutions $U_h^n$,
respectively, defined by
\begin{equation*}
 e_{\tau,2}(u) =   \|(U_h^n-u_h(t_n))_{n=1}^N\|_{\ell^2(L^2(\Omega))}
 \quad \mbox{and}\quad
  e_{\tau,\infty}(u) = \max_{1\le n\le N}\|U_h^n-u_h(t_n)\|_{L^2(\Omega)},
\end{equation*}
and similarly for the approximations $Z_h^n$ and $Q_h^n$.
The reference semidiscrete solution $u_h$ is computed with $h=1/50$ and $\tau=1/(64\times10^4)$.
Numerical experiments show that the empirical rate for the temporal discretization error is of
order $O(\tau^{\min(\frac12+\alpha,1)})$ and
$O(\tau^\alpha)$ in the $\ell^2(L^2(\Omega))$ and $\ell^\infty(L^2(\Omega))$-norms, respectively,
cf. Tables \ref{tab:a-l2}--\ref{tab:a-linf} and \ref{tab:b-l2}--\ref{tab:b-linf}, for
cases (a) and (b). These results agree well with the theoretical predictions from Theorems
 \ref{thm:full} and \ref{thm:full-Linfty}, and thus fully support the
error analysis in Section \ref{sec:error}. In Fig. \ref{fig:R0},
we plot the optimal control $q$, the state $u$ and the adjoint $z$. One clearly observes the weak
solution singularity at $t=0$ for the state $u$ and at $t=T$ for the adjoint $z$. The latter is
especially pronounced for case (b). The weak solution singularity is due to the incompatibility of the
source term with the zero initial/terminal data.

\begin{table}[hbt!]
\caption{Pointwise-in-time temporal errors for example (a) with $M=50$.}\label{tab:a-linf}
\vspace{-.3cm}{\setlength{\tabcolsep}{7pt}
	\centering
	\begin{tabular}{|c|c|cccccc|c|}
		\hline
		$\alpha$ &$N$ &$1000$ &$2000$ & $4000$ & $8000$ & $16000$ &$32000$ &rate \\
		\hline
		     &   $e_{\tau,\infty}(u)$      & 3.47e-5 & 2.78e-5 & 2.20e-5 & 1.72e-5 & 1.33e-5 & 1.03e-5  & 0.37 (0.40)\\
		0.4  &   $e_{\tau,\infty}(q)$      & 4.47e-4 & 3.58e-4 & 2.83e-4 & 2.21e-4 & 1.72e-4 & 1.33e-4  & 0.37 (0.40)\\
		     &   $e_{\tau,\infty}(z)$      & 4.47e-4 & 3.58e-4 & 2.83e-4 & 2.21e-4 & 1.72e-4 & 1.33e-4  & 0.37 (0.40)\\
    \hline
		     &   $e_{\tau,\infty}(u)$      & 5.72e-6 & 3.79e-6 & 2.51e-6 & 1.66e-6 & 1.09e-6 & 7.22e-7  & 0.60 (0.60)\\
		0.6  &   $e_{\tau,\infty}(q)$      & 7.64e-5 & 5.06e-5 & 3.35e-5 & 2.21e-5 & 1.46e-5 & 9.64e-6  & 0.60 (0.60)\\
		     &   $e_{\tau,\infty}(z)$      & 7.64e-5 & 5.06e-5 & 3.35e-5 & 2.21e-5 & 1.46e-5 & 9.64e-6  & 0.60 (0.60)\\
	\hline
		     &   $e_{\tau,\infty}(u)$      & 6.93e-7 & 3.97e-7 & 2.28e-7 & 1.31e-7 & 7.50e-8 & 4.31e-8  & 0.80 (0.80)\\
		0.8  &   $e_{\tau,\infty}(q)$      & 9.85e-6 & 5.65e-6 & 3.24e-6 & 1.86e-6 & 1.07e-6 & 6.13e-7  & 0.80 (0.80)\\
		     &   $e_{\tau,\infty}(z)$      & 9.85e-6 & 5.65e-6 & 3.24e-6 & 1.86e-6 & 1.07e-6 & 6.13e-7  & 0.80 (0.80)\\
	\hline
   \end{tabular}}
\end{table}

\begin{figure}[hbt!]
\centering
\setlength{\tabcolsep}{0pt}
\begin{tabular}{ccc}
\includegraphics[trim = .1cm .1cm .1cm .1cm, clip=true,width=0.33\textwidth]{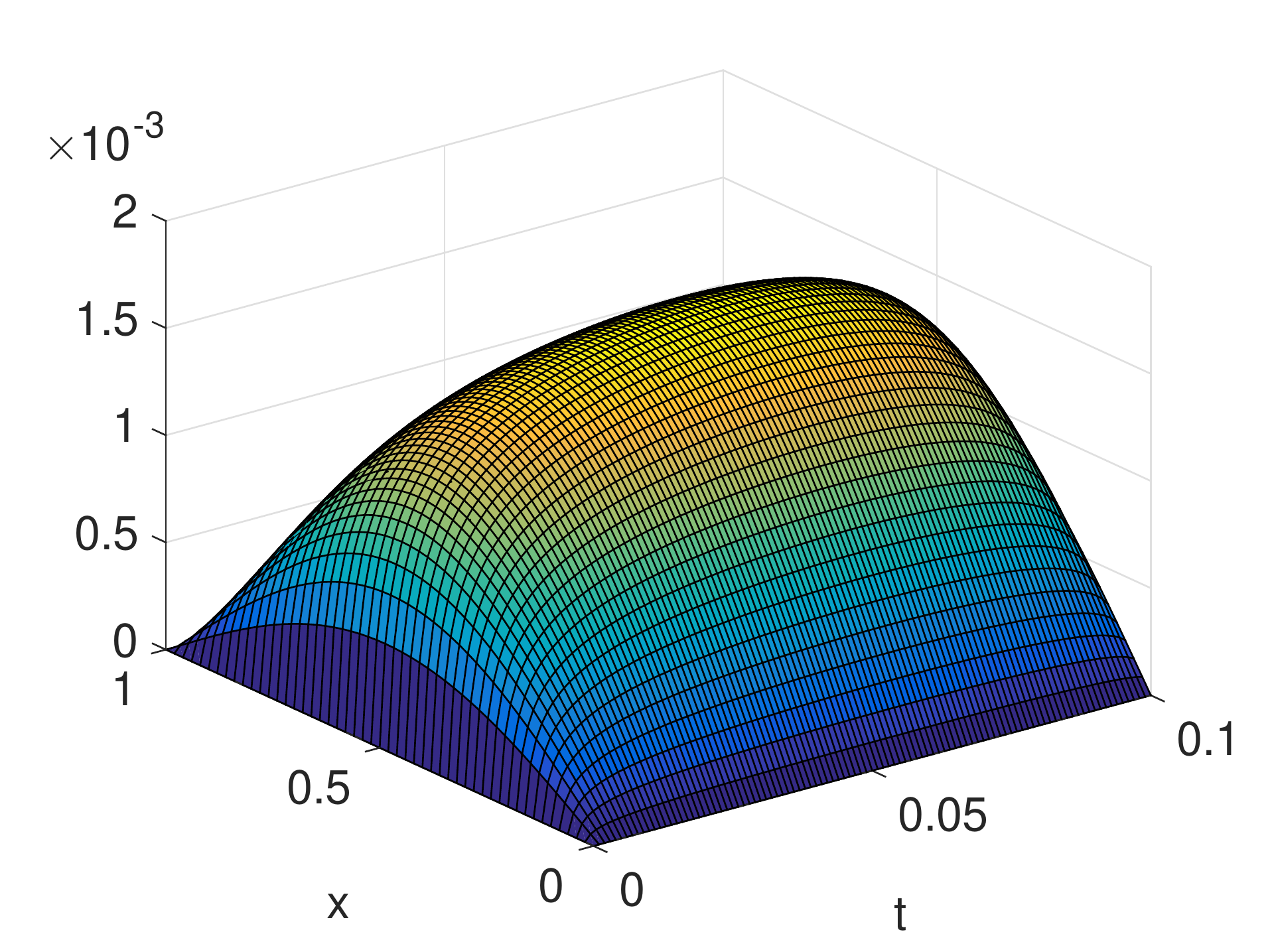}&
\includegraphics[trim = .1cm .1cm .1cm .1cm, clip=true,width=0.33\textwidth]{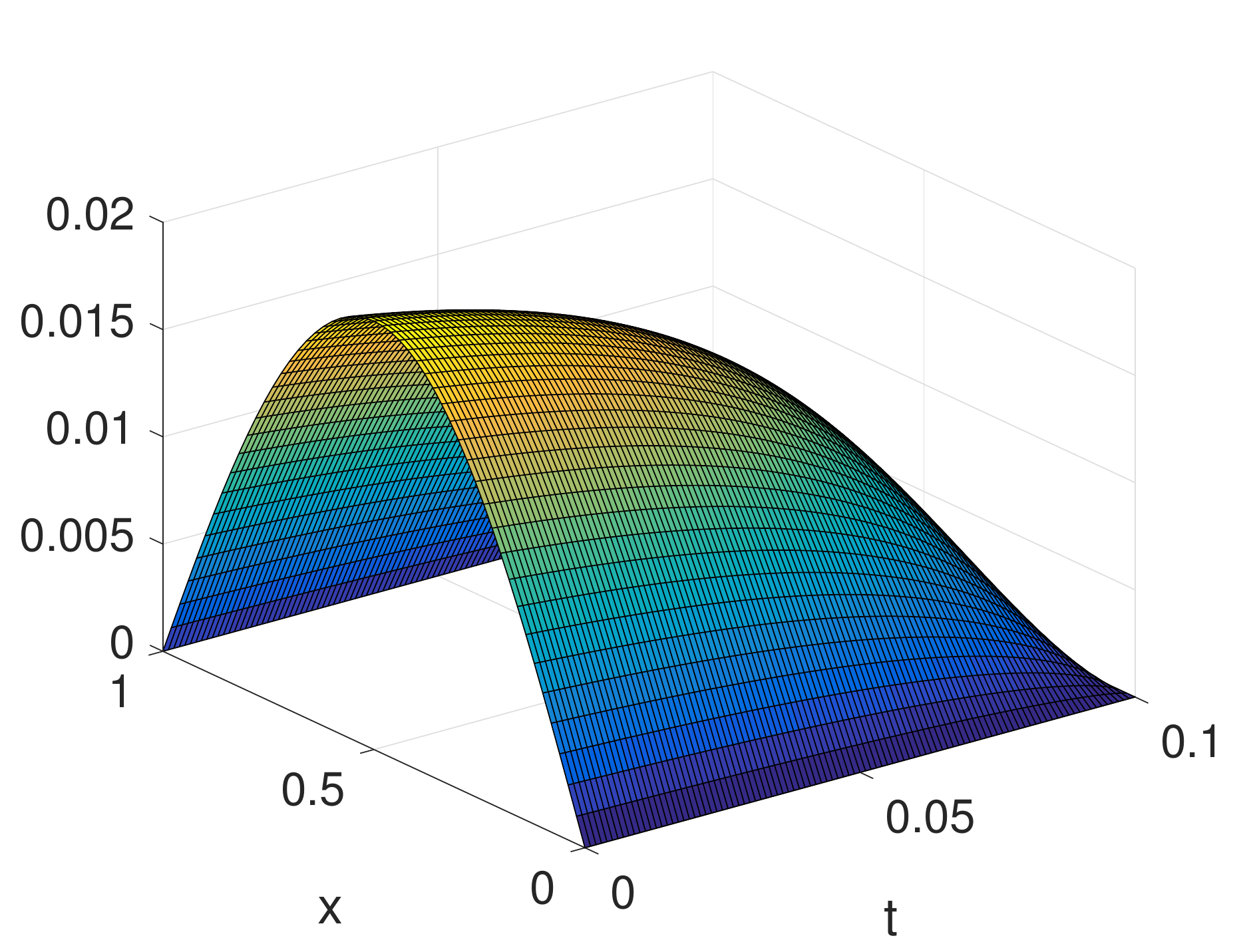}&
\includegraphics[trim = .1cm .1cm .1cm .1cm, clip=true,width=0.33\textwidth]{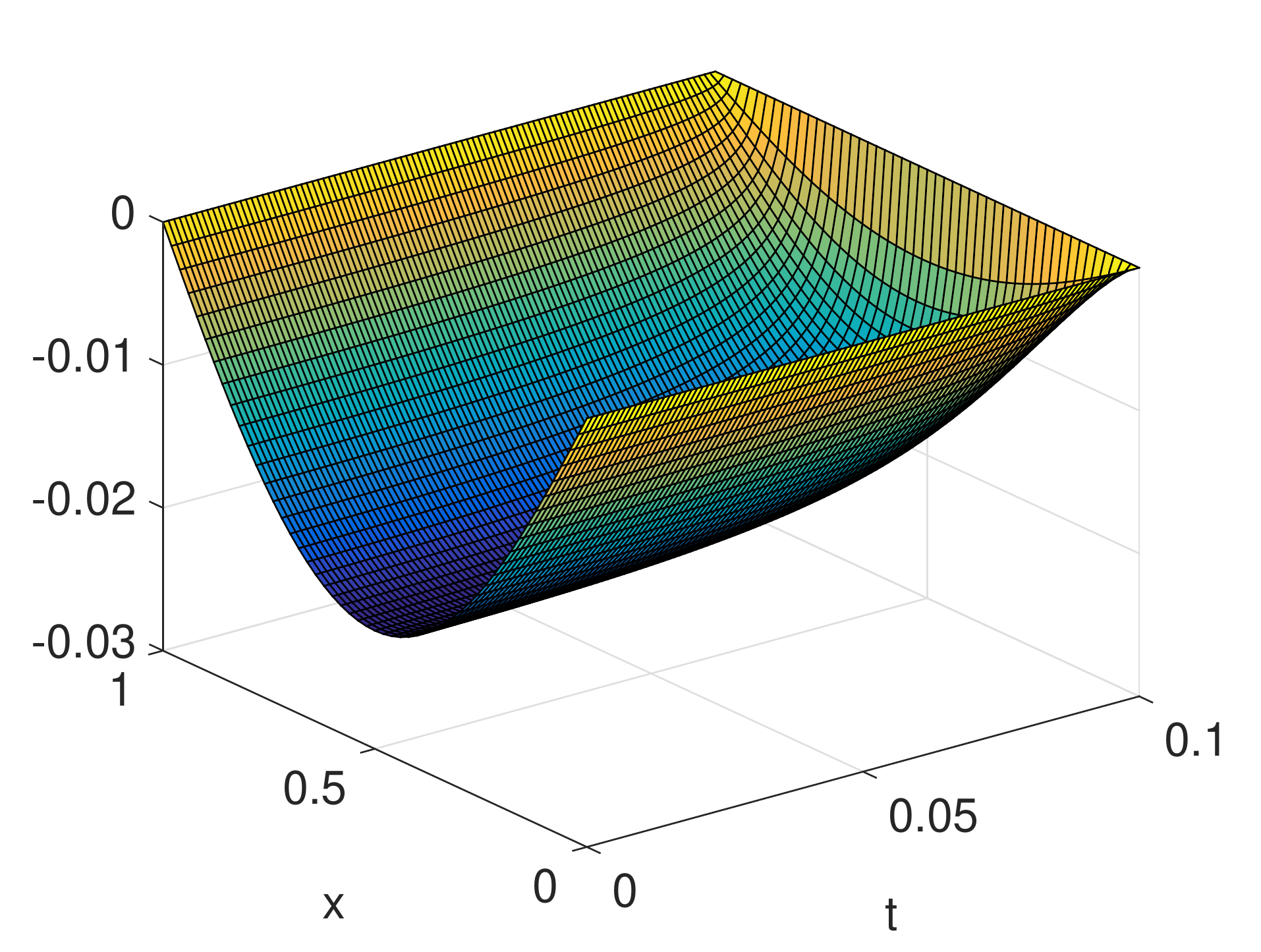}\\
\includegraphics[trim = .1cm .1cm .1cm .1cm, clip=true,width=0.33\textwidth]{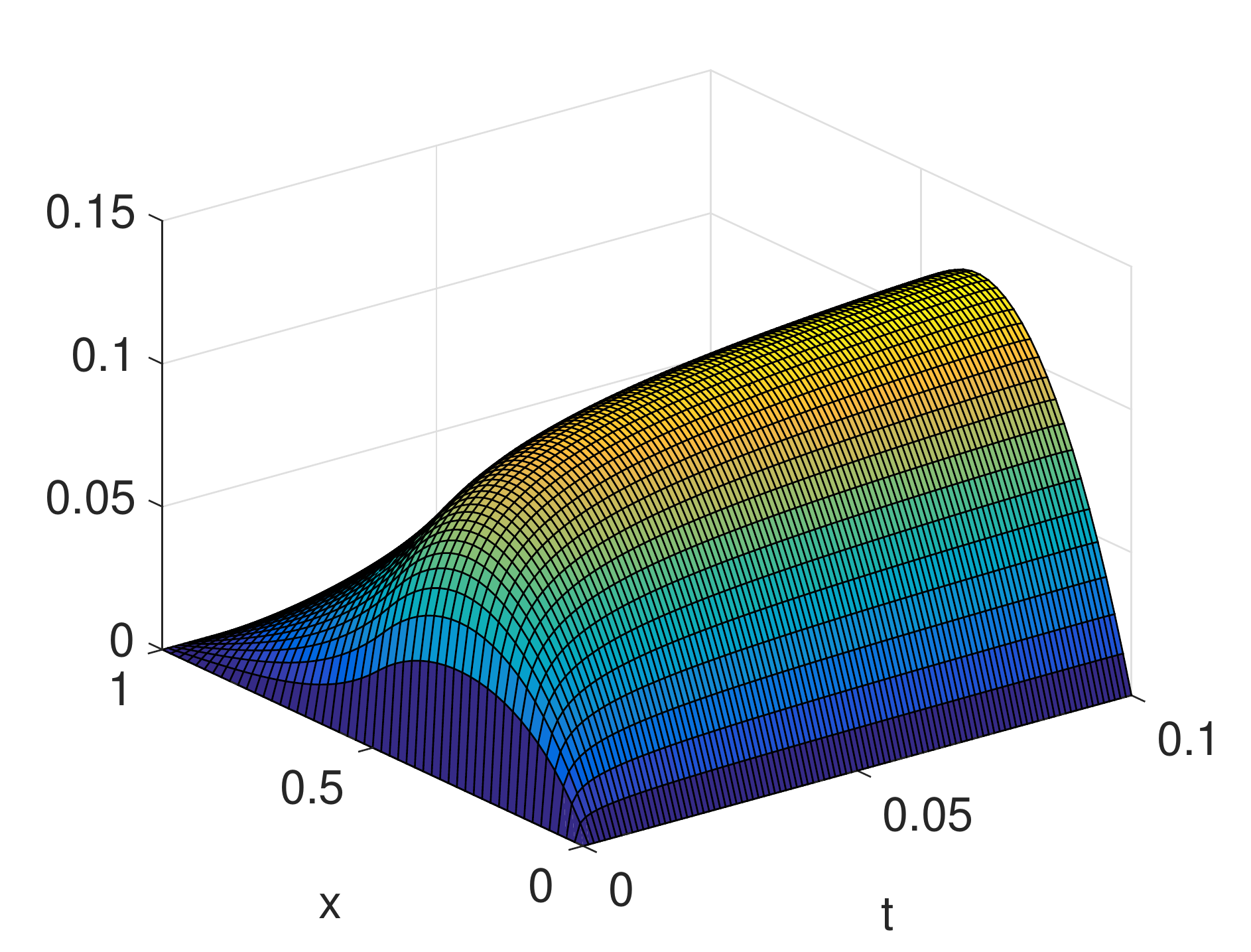}&
\includegraphics[trim = .1cm .1cm .1cm .1cm, clip=true,width=0.33\textwidth]{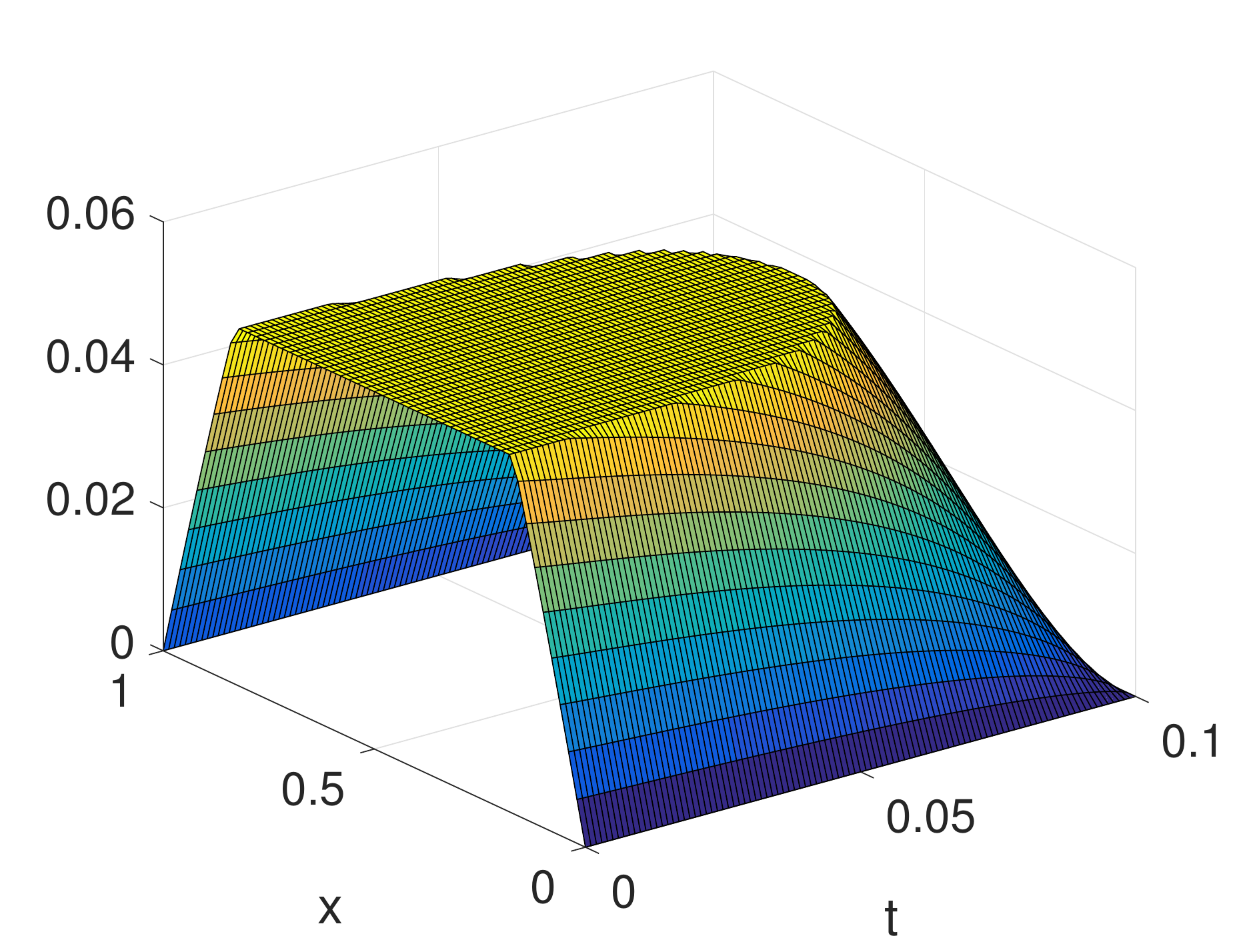}&
\includegraphics[trim = .1cm .1cm .1cm .1cm, clip=true,width=0.33\textwidth]{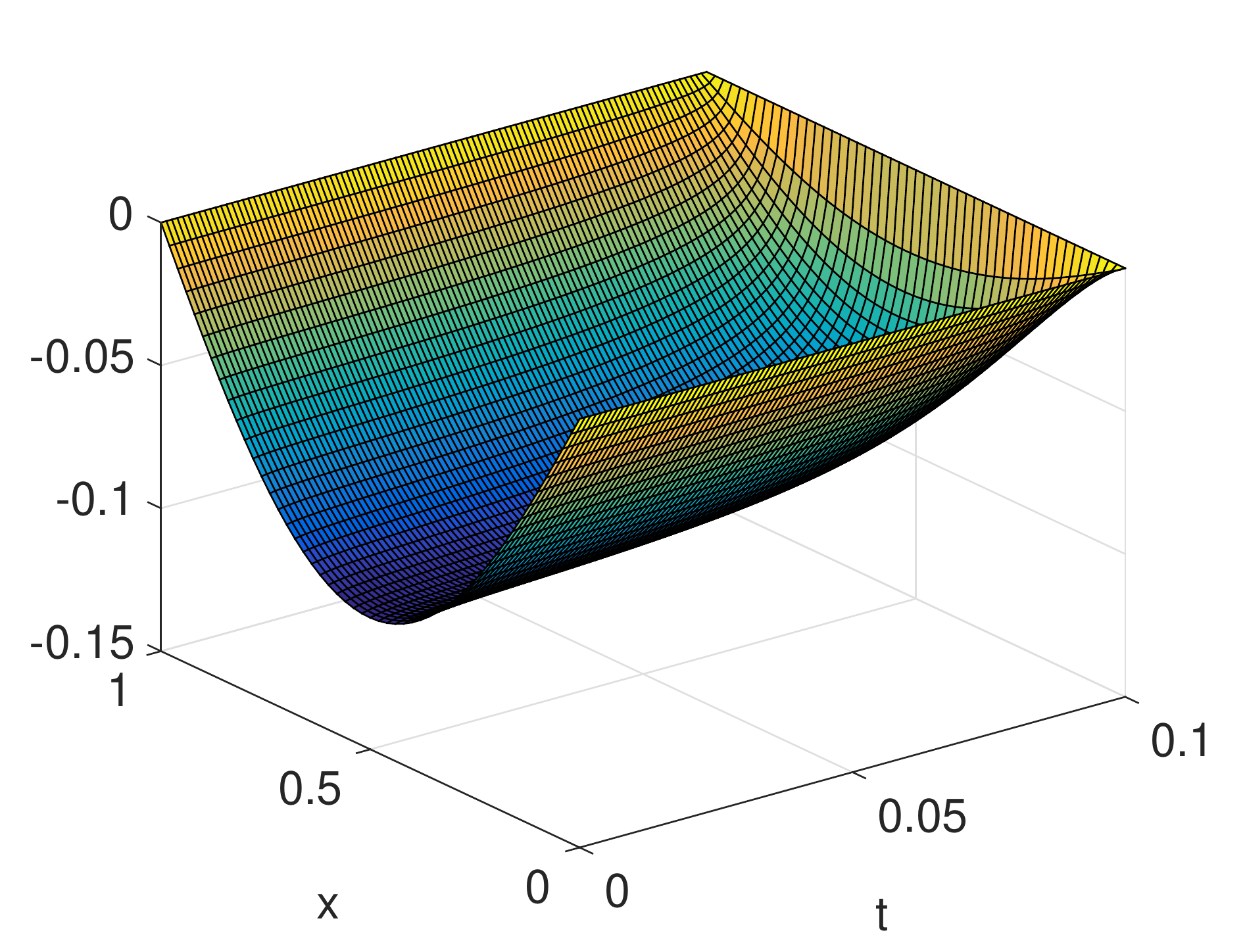}\\
$U_h^n$ & $Q_h^n$ & $Z_h^n$
\end{tabular}
\caption{Plot of $U_h^n$, $Q_h^n$ and $Z_h^n$ for example (a) (top) and (b) (bottom).\label{fig:R0}}
\end{figure}

\begin{table}[hbt!]
\caption{Spatial errors for example (b) with $N=10^4$.}\label{tab:b-spatial}
\vspace{-.3cm}{\setlength{\tabcolsep}{7pt}
	\centering
	\begin{tabular}{|c|c|cccccc|c|}
		\hline
		$\alpha$ &$N$ &$10$ &$20$ & $40$ & $80$ & $160$ &$320$ &rate \\
		\hline
		     &   $e_h(u)$      & 1.86e-4 & 4.72e-5 & 1.16e-5 & 2.82e-6 & 7.39e-7 & 1.84e-7  & 2.00 (2.00)\\
		0.4  &   $e_h(q)$      & 1.59e-4 & 3.97e-4 & 9.92e-6 & 2.48e-6 & 6.19e-7 & 1.55e-7  & 2.00 (2.00)\\
		     &   $e_h(z)$      & 1.78e-4 & 4.44e-5 & 1.11e-5 & 2.78e-6 & 6.94e-7 & 1.74e-7  & 2.00 (2.00)\\
    \hline
		     &   $e_h(u)$      & 1.99e-4 & 4.93e-5 & 1.20e-5 & 3.14e-6 & 7.83e-7 & 1.94e-7  & 2.00 (2.00)\\
		0.6  &   $e_h(q)$      & 1.66e-4 & 4.15e-5 & 1.04e-5 & 2.60e-6 & 6.50e-7 & 1.63e-7  & 2.00 (2.00)\\
		     &   $e_h(z)$      & 1.86e-4 & 4.66e-5 & 1.16e-5 & 2.91e-6 & 7.28e-7 & 1.82e-7  & 2.00 (2.00)\\
    \hline
		     &   $e_h(u)$      & 2.19e-4 & 5.31e-5 & 1.35e-5 & 3.35e-6 & 8.39e-7 & 2.10e-7  & 2.00 (2.00)\\
		0.8  &   $e_h(q)$      & 1.71e-4 & 4.29e-5 & 1.07e-5 & 2.68e-6 & 6.70e-7 & 1.68e-7   & 2.00 (2.00)\\
		     &   $e_h(z)$      & 1.96e-4 & 4.91e-5 & 1.23e-5 & 3.07e-6 & 7.66e-7 & 1.92e-7   & 2.00 (2.00)\\
	\hline
   \end{tabular}}
\end{table}
\begin{table}[hbt!]
\caption{Temporal errors for example (b) with $M=50$.}\label{tab:b-l2}
\vspace{-.3cm}{\setlength{\tabcolsep}{7pt}
	\centering
	\begin{tabular}{|c|c|cccccc|c|}
		\hline
		$\alpha$ &$N$ &$1000$ &$2000$ & $4000$ & $8000$ & $16000$ &$32000$ &rate \\
		\hline
		     &   $e_{\tau,2}(u)$      & 1.05e-4 & 6.34e-5 & 3.79e-5 & 2.24e-5 & 1.31e-5 & 7.59e-6  & 0.79 (0.90)\\
		0.4  &   $e_{\tau,2}(q)$      & 9.00e-5 & 5.43e-5 & 3.21e-5 & 1.87e-5 & 1.07e-5 & 6.10e-6  & 0.81 (0.90)\\
		     &   $e_{\tau,2}(z)$      & 9.36e-5 & 5.57e-5 & 3.26e-5 & 1.89e-5 & 1.08e-5 & 6.15e-6  & 0.82 (0.90)\\
    \hline
		     &   $e_{\tau,2}(u)$      & 4.67e-5 & 2.50e-5 & 1.33e-5 & 6.99e-6 & 3.65e-6 & 1.90e-6  & 0.94 (1.00)\\
		 0.6 &   $e_{\tau,2}(q)$      & 3.66e-5 & 1.95e-5 & 1.03e-5 & 5.40e-6 & 2.81e-6 & 1.45e-6  & 0.95 (1.00)\\
		     &   $e_{\tau,2}(z)$      & 3.83e-5 & 2.03e-5 & 1.07e-5 & 5.56e-6 & 2.89e-6 & 1.49e-6  & 0.95 (1.00)\\
	\hline
		     &   $e_{\tau,2}(u)$      & 2.23e-5 & 1.14e-5 & 5.85e-6 & 2.99e-6 & 1.52e-6 & 7.74e-7  & 0.98 (1.00)\\
		0.8  &   $e_{\tau,2}(q)$      & 1.58e-5 & 8.23e-6 & 4.25e-6 & 2.19e-6 & 1.12e-6 & 5.73e-7  & 0.97 (1.00)\\
		     &   $e_{\tau,2}(z)$      & 1.78e-5 & 9.17e-6 & 4.70e-6 & 2.40e-6 & 1.22e-6 & 6.23e-7  & 0.98 (1.00)\\
	\hline
   \end{tabular}}
\end{table}

\begin{table}[hbt!]
\caption{Pointwise-in-time temporal errors for example (b) with $M=50$.}\label{tab:b-linf}
\vspace{-.3cm}{\setlength{\tabcolsep}{7pt}
	\centering
	\begin{tabular}{|c|c|cccccc|c|}
		\hline
		$\alpha$ &$N$ &$1000$ &$2000$ & $4000$ & $8000$ & $16000$ &$32000$ &rate \\
		\hline
		     &   $e_{\tau,\infty}(u)$      & 2.40e-3 & 1.98e-3 & 1.62e-3 & 1.31e-3 & 1.04e-3 & 8.25e-4  & 0.34 (0.40)\\
		0.4  &   $e_{\tau,\infty}(q)$      & 2.07e-3 & 1.65e-3 & 1.31e-3 & 1.02e-3 & 7.95e-4 & 6.14e-4  & 0.37 (0.40)\\
		     &   $e_{\tau,\infty}(z)$      & 2.07e-3 & 1.65e-3 & 1.31e-3 & 1.02e-3 & 7.95e-4 & 6.14e-4  & 0.37 (0.40)\\
    \hline
		     &   $e_{\tau,\infty}(u)$      & 5.11e-4 & 3.45e-4 & 2.32e-4 & 1.55e-4 & 1.03e-4 & 6.85e-5  & 0.59 (0.60)\\
		0.6  &   $e_{\tau,\infty}(q)$      & 3.54e-4 & 2.35e-4 & 1.55e-4 & 1.03e-4 & 6.77e-5 & 4.47e-5  & 0.60 (0.60)\\
		     &   $e_{\tau,\infty}(z)$      & 3.54e-4 & 2.35e-4 & 1.55e-4 & 1.03e-4 & 6.77e-5 & 4.47e-5  & 0.60 (0.60)\\
	\hline
		     &   $e_{\tau,\infty}(u)$      & 6.96e-5 & 4.03e-5 & 2.33e-5 & 1.34e-5 & 7.68e-6 & 4.41e-6  & 0.80 (0.80)\\
		0.8  &   $e_{\tau,\infty}(q)$      & 4.60e-5 & 2.63e-5 & 1.51e-5 & 8.67e-6 & 4.98e-6 & 2.86e-6  & 0.80 (0.80)\\
		     &   $e_{\tau,\infty}(z)$      & 4.60e-5 & 2.63e-5 & 1.51e-5 & 8.67e-6 & 4.98e-6 & 2.86e-6  & 0.80 (0.80)\\
	\hline
   \end{tabular}}
\end{table}

\section{Conclusions}

In this work, we have developed a complete numerical analysis of a fully discrete scheme for a distributed
optimal control problem governed by a subdiffusion equation, with box constraint on the control
variable, and derived nearly sharp pointwise-in-time error estimates for both space and time discretizations. These estimates agree well with
the empirical rates observed in the numerical experiments. The theoretical and numerical results show  the adverse
influence of the fractional derivatives on the convergence rate when the fractional order $\alpha$ is small.


\appendix
\section{Proof of Lemma \ref{lem:L-interp}}\label{app:interp}
\begin{proof}
By Sobolev embedding, $ W^{s,p}(0,1;L^2(\Omega))\hookrightarrow C([0,1];L^2(\Omega))$ for $s\in ({1}/{p},1]$, and thus
we can define an interpolation operator $\Pi$ by $\Pi v(\hat t) = v(1)$, for $\hat t\in(0,1)$, for any $v\in W^{s,p}(0,1;L^2(\Omega))$.
The operator $E =I-\Pi$ is bounded from $W^{s,p}(0,1;L^2(\Omega))$ to $L^2(0,1;L^2(\Omega))$:
\begin{equation*}
  \|  Ev\|_{L^2(0,1;L^2(\Omega))} = \| (I-\Pi)v\|_{L^2(0,1;L^2(\Omega))} \le c \| v \|_{W^{s,p}(0,1;L^2(\Omega))}.
\end{equation*}
By the fractional Poincar\'e inequality (cf. \cite{HurriVahakangas:2013}), we have
\begin{equation}\label{eqn:frac-Poincare}
\begin{split}
  \| E v\|_{L^p(0,1;L^2(\Omega))} &=   \inf_{p\in\mathbb{R}}  \| E  (v - p) \|_{L^p(0,1;L^2(\Omega))}
  \le   c \inf_{p\in\mathbb{R}} \| v- p  \|_{W^{s,p}(0,1;L^2(\Omega))} \\
  & \le  c | v|_{W^{s,p}(0,1;L^2(\Omega))} ,
\end{split}
\end{equation}
where the seminorm $|\cdot|_{W^{s,p}(0,T;L^2(\Omega))}$ is defined in \eqref{eqn:SS-seminorm}.
By H\"{o}lder's inequality, we obtain
\begin{equation*}
  \begin{aligned}
    & \| (v(t_n) - \bar v^n)_{n=1}^N  \|_{\ell^p(L^2\II)}  ^p
     = \tau \sum_{n=1}^N \|v(t_n)-\tau^{-1}\int_{t_{n-1}}^{t_n}v(t)\d t\|_{L^2(\Omega)}^p\\
   =&   \tau^{1-p}\sum_{n=1}^N \|\int_{t_{n-1}}^{t_n}(v(t_n)-v(t))\d t\|_{L^2(\Omega)}^p
    \le  \sum_{n=1}^N \int_{t_{n-1}}^{t_n} \|v(t_n) - v(t)\|_{L^2(\Omega)}^p\,\d t.
  \end{aligned}
\end{equation*}
Let $\widehat v_n(\hat t)=v(t_{n-1}+\tau\hat t)$, for $\hat t\in[0,1] $, $n=1,\ldots,N$.
Then $\widehat v_n(\hat t) \in W^{s,p}(0,1;L^2(\Omega))$ and by \eqref{eqn:frac-Poincare}, we have
\begin{align*}
\| (v(t_n) - \bar v^n)_{n=1}^N  \|_{\ell^p(L^2\II)}  ^p
    &\leq \tau \sum_{n=1}^N \int_{0}^{1} \| \Pi \widehat v_n  -  \widehat v_n\|_{L^2(\Omega)}^p( \hat t)\,\d \hat t
    \le c \tau \sum_{n=1}^N  | \widehat v_n|_{W^{s,p}(0,1;L^2(\Omega))}^p \\
    &\le c \tau \sum_{n=1}^N   \int_0^1 \int_0^1 \frac{\|\widehat v_n(\hat t)-\widehat v_n(\hat \xi)\|_{L^2(\Omega)}^p}{|\hat t-\hat \xi|^{1+ps}} \,\d\hat t \d \hat \xi \\
    &= c \tau^{ps} \sum_{n=1}^N   \int_{t_{n-1}}^{t_n}  \int_{t_{n-1}}^{t_n} \frac{\|v(t)-v(\xi)\|_{L^2(\Omega)}^p}{|t-\xi|^{1+ps}} \,\d t\d\xi \\
    & \le c \tau^{ps}   \int_{0}^T  \int_{0}^T \frac{\|v(t)-v(\xi)\|_{L^2(\Omega)}^p}{|t-\xi|^{1+ps}} \,\d t\d\xi
    =  c \tau^{ps}  |v|_{W^{s,p}(0,T;L^2(\Omega))}^p ,
\end{align*}
which implies the desired assertion.
\end{proof}

\bibliographystyle{abbrv}
\bibliography{frac}

\end{document}